\author{Janusz Ginster}
\address{Weierstrass Institute \\
Anton-Wilhelm-Amo-Str. 39 \\ 10117 Berlin \\ Germany }
\email{janusz.ginster@wias-berlin.de}
\newcommand{\eps}{\varepsilon}
\title[Formation of microstructure and vortices for an energy related to helimagnetism]{On the formation of microstructure and the occurrence of vortices in a singularly perturbed energy related to helimagnetism: a scaling law result}
\begin{document}

\begin{abstract}
    In this work, singularly perturbed energies arising from discrete $J_1$-$J_3$-models are studied. 
    The energies under consideration consist of a non-convex bulk term and a higher-order regularizing term and are subject to incompatible boundary conditions.
    In contrast to existing results in the literature, in this work, admissible fields are not necessarily gradient fields, instead their $\operatorname{curl}$ is linked to topological singularities, so-called vortices, in the discrete $J_1$-$J_3$-model.
    The main result of this work is a scaling law for the minimal energy with respect to three parameters: one measuring the incompatibility of the boundary conditions, the second measuring the strength of the regularizing term, and the third being related to the interatomic distance in the discrete model.
    The shown result implies in particular that in certain parameter regimes, minimizers necessarily develop vortices.
    A key tool in the analysis is a careful modification of the celebrated ball-construction technique that, due to a lack of rigidity, considers simultaneously both the bulk energy and the regularizing term. 
\end{abstract}

\keywords{Scaling law, singular perturbation, formation of microstructure, formation of vortices, helimagnetism}
\subjclass[2020]{49J40,82B21,82B24}

\maketitle

\section{Introduction}

We study several variants of variational energies of the form
\begin{equation}\label{eq: energy intro}
E_{\sigma}(\beta) = \int_{(0,1)^2} W(\beta) \, d\mathcal{L}^2 + \sigma |D\beta|((0,1)^2),
\end{equation}
where $\sigma > 0$, $\beta: (0,1)^2 \to \R^2$ and $W$ is a multi-well potential given by $W(\beta) = (1-\beta_1^2)^2 + (1-\beta_2^2)^2$.
Such models arise from a statistical-mechanical description of helimagnetic compounds and help explain pattern formation and the occurrence of certain topological defects of the magnetic spin field, so-called vortices, under incompatible boundary conditions, see Section \ref{sec: heuristics}.

Our main result will be a scaling law result for the infimal energies, which allows to determine parameter regimes for these energies in which uniform structures, finely oscillating patterns, or vortices appear to be energetically favorable. 

Establishing scaling laws has proven useful in a broad range of variational problems in which determining exact minimizers analytically or numerically is very challenging due to expected fine-scale structures of the minimizer, cf.~\cite{kohn:06}.
In such problems, the formation of patterns is often the result of the competition of a non-(quasi)convex part of the energy that allows for fine-scale oscillations and a term that penalizes non-uniform structures. 
A non-exhaustive list of results that successfully adopt this strategy includes \cite{B11-1994-KM,conti:00,conti:06,CapellaOtto2009,CapellaOtto2012,B11-chan-conti:14-1,B11-KKO,bella-goldman:15,conti-zwicknagl:16,ContiDiermeierMelchingZwicknagl2020,ruland2021energy,GiTrRuZw25} for martensitic microstructure, \cite{KoWi14,Kohn-Wirth:15} for compliance minimization,
 \cite{choksi-kohn:98,choksi-et-al:98,DKMO:06,otto-steiner:10,otto-viehmann:10,knuepfer-muratov:11,Dabade-et-al:19,DaVeJa20,DeKnOt06} for micromagnetism, \cite{CKO03,choksi-et-al:08,B11-ContiOttoSerfaty} for type-I-superconductors, \cite{belgacem-et-al:02,bella-kohn:14,BCM2017,CoDeMu05,GiNeZw24} for compressed thin elastic films, and \cite{ContiOrtiz:05,conti-zwicknagl:16} for dislocation patterns.

The work most closely related to this manuscript is the scaling law result in \cite{GinZwi:22} for an energy similar to  \eqref{eq: energy intro}  in the case that admissible fields $\beta$ are gradients.
It is shown for $\theta \in (0,1/2]$ and $\sigma > 0$ that
\begin{equation} \label{eq: result gradient}
\inf_{u(0,y) = (1-2\theta)y} E_{\sigma}(\nabla u) \sim \min\left\{ \theta^2, \sigma \left( \frac{|\log \sigma|}{|\log \theta|} + 1\right) \right\}.
\end{equation}
We note that for $\theta \in (0,1/2]$ the boundary condition $u(0,y) = (1-2\theta) y$ is incompatible with a constant gradient in the wells of $W$ given by $W^{-1}(0) = \left\{ \begin{pmatrix} 1 \\ 1 \end{pmatrix},  \begin{pmatrix} -1 \\ 1 \end{pmatrix},  \begin{pmatrix} 1 \\ -1 \end{pmatrix},  \begin{pmatrix} -1 \\ -1 \end{pmatrix} \right\}$. 
Hence, at least for small values of $\sigma > 0$ it can be expected that the boundary values are attained by fine oscillations of $\partial_y u$ close to $\{0\} \times (0,1)$.
Regarding upper bounds, the above result confirms this, as the term $\theta^2$ corresponds to the uniform structure $u(x,y) = (1-2\theta) y \pm 1$, whereas the logarithmic term can be proven by a self-similarly refining branching construction, see Figure \ref{fig: branching}.
Similar results in the gradient case are shown on different domains in \cite{Gi23} and for more general functionals in \cite{GinZwi:23}. 

In this work we consider the more general situation in which admissible fields $\beta$ are not necessarily $\operatorname{curl}$-free. 
Instead, we assume for $\eps > 0$ that $\operatorname{curl } \beta = \sigma \sum_{i} \gamma_i \delta_{x_i}*\rho_{\eps}$, where $\gamma_i \in \{\pm 1\}$, $x_i \in (0,1)^2$ and $\rho_{\eps}$ is a standard mollifier on scale $\eps$.
For an interpretation of this condition in the context of helimagnetic compounds, we refer to Section \ref{sec: heuristics}.

We show that for $\sigma \gtrsim \eps > 0$ and $\theta \in (0,1/2]$ it holds, cf.~Theorem \ref{thm: main},
\begin{equation}\label{eq: scaling law}
    \inf_{\beta_2(0,y) = 1-2\theta} E_{\sigma}(\beta) \sim \min\left\{ \theta^2, \sigma \left( \frac{|\log \sigma|}{|\log \theta|} + 1\right), \theta \frac{\sigma^3}{\eps^2} + \theta \sigma |\log \theta| \right\}.
\end{equation}
In addition to the two terms already present in \eqref{eq: result gradient}, an additional third term appears, corresponding to a configuration with vortices that act analogously to dislocations in solids at interfaces, see e.g., \cite{GiNeZw24}. 
Indeed, consider equidistant positive vortices ($\gamma_i = 1$ for all $i$) spaced at distance $\frac{\sigma}{2\theta}$ near the left boundary $\{0\} \times (0,1)$. In this case $\int_{ (0,1) \times (y_1,y_2)} \operatorname{curl } \beta \, dx \sim (y_2 - y_1) 2 \theta$. 
This allows to interpolate in a $\operatorname{curl}$-free way from $\beta \equiv (1,1)^T$ towards the boundary conditions at $\{0\} \times (0,1)$, see Figure \ref{fig: upper bound}. In order to estimate the energy of such a configuration, note that a field $\beta: \R^2 \to \R^2$ satisfying $\operatorname{curl } \beta = \sigma \delta_0 * \rho_{\eps}$ typically behaves like $|\beta| \sim \min\{ \frac{\sigma}{\eps}, \frac{\sigma}{|x|} \}$. 
Hence, we estimate the energy induced by the above configuration
\begin{align}
\int_{(0,1)^2} W(\beta) \, d\mathcal{L}^2 &\sim \frac{\theta}{\sigma} \left( \int_{B_{\eps}(0)} \frac{\sigma^4}{\eps^4} \, d \mathcal{L}^2 + \int_{B_{\sigma / \theta}(0) \setminus B_{\eps}(0)} \frac{\sigma^2}{|x|^2} \, d\mathcal{L}^2 \right) \\ & \sim \frac{\theta}{\sigma} \left( \frac{\sigma^4}{\eps^2} + \sigma^2 \log \left( \frac{\sigma}{\theta \eps} \right)  \right) \lesssim \theta \frac{\sigma^3}{\eps^2} + \theta \sigma |\log \theta|,
\end{align}
where we used that $\log (\sigma / (\theta \eps) ) \lesssim \log \frac1{\theta} + \frac{\sigma^2}{\eps^2}$.
Similarly, we expect for a field with a single vortex at the origin $|D\beta| \sim \min\{\frac{\sigma}{\eps^2},\frac{\sigma}{|x|^2}\}$ and therefore for the configuration above
\[
\sigma |D\beta| \sim  \sigma \frac{\theta}{\sigma} \left( \int_{B_{\eps}(0)} \frac{\sigma}{\eps^2} \, d\mathcal{L}^2 + \int_{B_{\sigma / \theta}(0) \setminus B_{\eps}(0)} \frac{\sigma}{|x|^2} \, d\mathcal{L}^2  \right) \sim \sigma \theta +  \sigma \theta \log\left( \frac{\sigma}{\theta \eps} \right) \lesssim \sigma \theta |\log \theta|.
\]
We will also consider a quadratic regularizing term (cf.~Section \ref{sec: main}) for which a similar computation for the discussed configuration shows
\[
\sigma^2 \int_{(0,1)^2} |D \beta|^2 \, d\mathcal{L}^2 \sim \sigma^2 \frac{\theta}{\sigma} \left( \int_{B_{\eps}(0)} \frac{\sigma^2}{\eps^4} \, d\mathcal{L}^2 + \int_{B_{\sigma / \theta}(0) \setminus B_{\eps}(0)} \frac{\sigma^2}{|x|^4} \, d\mathcal{L}^2  \right) \sim \theta \frac{\sigma^3}{\eps^2}.
\]
A matching lower bound has to be proven ansatz-free. 
The key difficulties in establishing the lower bound are twofold. 
First, it needs to be carefully shown that the techniques from the $\operatorname{curl}$-free setting (see \cite{GinZwi:22, GinZwi:23, Gi23}) can be modified to prove the same lower bounds as in the $\operatorname{curl}$-free setting for fields with not too many vortices. 
Second, in order to establish lower bounds for fields with a relevant number of vortices we distinguish the induced energy close and far from the vortices similar to the heuristic computation above for the upper bound. 
For the technically more demanding estimate far from the vortices, we adapt the celebrated ball-construction to obtain a logarithmic lower bound.
To the author's knowledge, the main difference to applications of this technique in the literature (in the case of Ginzburg-Landau \cite{Je99,Sa98} and dislocations \cite{dLGaPo12,Gi18,GiNeZw24}) is that in our setting the term of the energy $\int W(\beta) \, d\mathcal{L}^2$ does not allow for any rigidity in the sense that there may exist admissible fields $\beta$ that lie far from the vortices completely in the wells of $W$ but instead induce surface energy. 
For a heuristic explanation of the scales of the construction, we refer to \cite[Section 3.1]{GinZwi:22}.
Hence, the proof of the lower bound in our case has to compensate for this non-rigidity through the regularizing higher-order term, see Proposition \ref{prop: ball constr}.

\subsection{Heuristic connection to a frustrated spin system and interpretation of the result}\label{sec: heuristics}

Let us briefly explain the heuristic connection of the continuum energy discussed above, \eqref{eq: energy intro}, to a discrete $J_1 - J_3$ model on the square lattice $\eps \Z^2$. The arguments presented below can  essentially be found in \cite{CiSo15, CiFoOr19, GiKoZw}.

For $\eps,\alpha > 0$ assign to $u: \eps \Z^2 \cap [0,1)^2 \to S^1$ the (renormalized) interaction energy
\[
F_{\alpha,\eps}(u) = -\alpha \sum_{k,j \in [0,1)^2 \cap \eps\Z^2: |k-j| = \eps} u(k) \cdot u(j) + \sum_{k,j \in [0,1)^2 \cap \eps\Z^2: |k-j| = 2\eps} u(k) \cdot u(j),
\]
i.e., ferromagnetic interactions between nearest neighbors and anti-ferromagnetic interactions between second neighbors in rows and columns (\textit{not} next-to-nearest, i.e.~diagonal, interactions, under whose influence the behavior of the energy changes drastically, see \cite{CiFoOr22}). 
The two competing interactions of the energy cannot be minimized simultaneously, leading to frustration in the discrete system. 
However, up to boundary effects, the energy can be rewritten as
\begin{align}
I_{\alpha,\eps}(u) \approx &\frac12 \sum_{j \in [0,1)^2 \cap \eps \Z^2} \left( \left|  u(j) - \frac{\alpha}2 u(j + \eps e_1) + u(j+2 \eps e_1) \right|^2  - 2 - \frac{\alpha^2}4 \right) \\ &+ \frac12 \sum_{j \in [0,1)^2 \cap \eps \Z^2} \left( \left|  u(j) - \frac{\alpha}2 u(j + \eps e_2) + u(j+2 \eps e_2) \right|^2  - 2 - \frac{\alpha^2}4 \right).
\end{align}
In particular, it can be seen that for $\alpha > 4$ the right-hand side is minimized by constant spin fields, whereas for $0<\alpha <4$ optimal configurations rotate with an optimal angle $\pm \arccos(\alpha/4)$ in rows and columns, see Figure \ref{fig: min discrete energy}.  
\begin{figure}
\centering
\begin{tikzpicture}[scale = 3]
    \draw[->] (0,0) -- (0,3/2);
    \draw[->] (0,0) -- (-1/3,3/4);
    \draw[->] (0,0) -- (1/3,3/4);
    \draw[dotted,->] (-1/3,3/4) -- (0,3/2);

    \draw(0,3/2) node[anchor=west] {$\frac{\alpha}4 u(j+e_1)$};
    \draw(1/3,3/4) node[anchor=west] {$ u(j+e_2)$};
    \draw(-1/3,3/4) node[anchor=east] {$u(j)$};    
\end{tikzpicture} \qquad \qquad 
\begin{tikzpicture}[scale=0.6]
    \foreach \i in {1,...,10}{
    \foreach \j in {1,...,10}{
    \fill[black] (\i,\j) circle (1pt);
    \draw[->] (\i,\j) -- +(\i*20 + \j*20:10pt);
    }
    }
\end{tikzpicture}
    \caption{Left: Sketch of neighboring spins such that $u(j) - \frac{\alpha}2 u(j+e_1) + u(j+e_2) = 0$. The angle between these neighboring spins is given by $\arccos(\alpha/4)$. Right: Sketch of a ground state of the discrete energy in which the spin field (the sketched spin field is scaled to length of order $\eps$ in order to fit the picture) rotates counter-clockwise in rows and columns. }
    \label{fig: min discrete energy}
\end{figure}
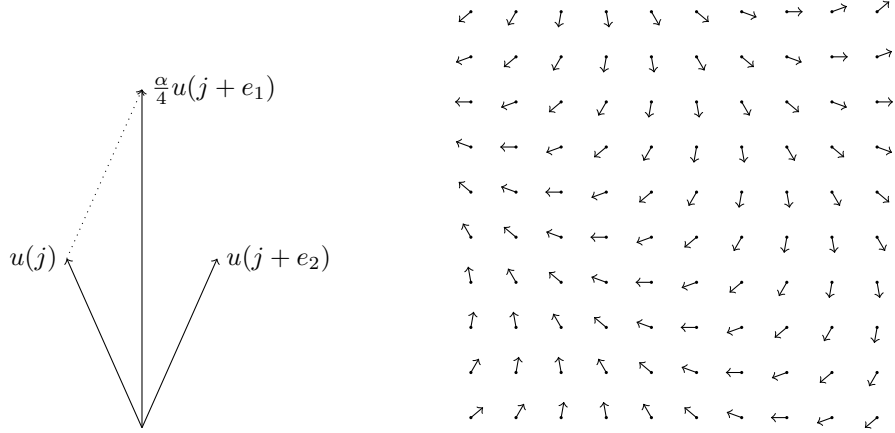
We are interested in the case $0< \alpha < 4$. Let us now define the vertical and horizontal angular change as
\begin{align}
\theta^{hor}(j)& = \operatorname{sign} (u(j) \times u(j+\eps e_1)) \arccos\left( u(j) \cdot u(j+\eps e_1) \right) \\ \text{ and } \qquad \theta^{ver}(j) &= \operatorname{sign}(u(j) \times u(j+\eps e_2))  \arccos\left(  u(j) \cdot u(j+ \eps e_2) \right),
\end{align}
where $\arccos:[-1,1] \to [0,\pi]$ denotes the usual inverse to $\cos$. 
With this definition it follows that 
\begin{equation}\label{eq: angular velocity}
u(j+ \eps e_1) = e^{i\theta^{hor}(j)} u(j) \text{ and } u(j+\eps e_2) = e^{i\theta^{ver}(j)} u(j).
\end{equation}
Moreover, note that the field $(\theta^{hor}, \theta^{ver})$ satisfies the discrete $\operatorname{curl}$-condition
\begin{equation}\label{eq: discrete curl}
    \theta^{hor}(j) + \theta^{ver}(j+e_1) - \theta^{hor}(j+e_2) - \theta^{ver}(j)  \in \{-2\pi,0,2\pi\}.
\end{equation}
In case the above expression lies in $\{\pm 2\pi\}$ we call this a \textit{vortex} of the spin field, i.e.~the spin field rotates clockwise or counter-clockwise around an $\eps$-cell.
In the following we will identify the fields $u$, $\theta^{hor}$ and $\theta^{ver}$ with functions defined on $(0,1)^2$ by interpolation.
Let us now write for $\delta = \frac{4 - \alpha}4$, i.e., $\alpha = 4(1-\delta)$, 
\begin{align}
&    \eps^2 \sum_{j \in [0,1)^2 \cap \eps \Z^2} \left|  u(j) - \frac{\alpha}2 u(j + \eps e_1) + u(j+2 \eps e_1) \right|^2 \\
= & \eps^2 \sum_{j \in [0,1)^2 \cap \eps \Z^2} \left|  \underbrace{u(j) - 2 u(j + \eps e_1) + u(j+2 \eps e_1)}_{\approx \eps^2 \partial_1 \partial_1 u} + 2 \delta u(j+\eps e_1) \right|^2 \\
\approx &\int_{(0,1)^2} \eps^4 |\partial_1 \partial_1 u|^2 + 4\delta \eps^2 u\cdot (\partial_1 \partial_1 u) + 4 \delta^2 \, d\mathcal{L}^2.
\end{align}
From \eqref{eq: angular velocity} we obtain (we identify $\mathbb{C} \simeq \R^2$)
\[
\partial_1 \partial_1 u \approx \partial_1 \left( i \frac{\theta^{hor}}{\eps} u \right) \approx i \left( \partial_1  \frac{\theta^{hor}}{\eps} \right) u - \left( \frac{\theta^{hor}}{\eps} \right)^2 u.
\]
In particular, it follows 
\[
|\partial_1 \partial_1 u|^2 \approx \left(\partial_1 \frac{\theta^{hor}}{\eps} \right)^2 + \left(\frac{\theta^{hor}}{\eps} \right)^4.
\]
Then we find from the above
\begin{align}
    &\int_{(0,1)^2} \eps^4 |\partial_1 \partial_1 u|^2 + 4\delta  \eps^2 u\cdot (\partial_1 \partial_1 u) + 4 \delta^2 \, d\mathcal{L}^2 \\
   \approx  &\int_{(0,1)^2} \eps^4 \left(\partial_1  \frac{\theta^{hor}}{\eps} \right)^2 + \eps^4 \left( \frac{\theta^{hor}}{\eps} \right)^4 - 4 \delta  \eps^2 \left(\frac{\theta^{hor}}{\eps} \right)^2 + 4 \delta^2 \, d\mathcal{L}^2 \\
   = &\int_{(0,1)^2} \eps^2 \left(\partial_1  \theta^{hor} \right)^2 + \left( \left( \theta^{hor} \right)^2 - 2 \delta \right)^2 \, d\mathcal{L}^2 \\
   = & 4 \delta^2 \int_{(0,1)^2} \frac{\eps^2}{2\delta} \left(  \partial_1 \frac{1}{\sqrt{2\delta}} \theta^{hor}\right)^2 + \left( \left( \frac{1}{\sqrt{2 \delta}} \theta^{hor}\right)^2 - 1\right)^2 \, d\mathcal{L}^2.
\end{align}
Therefore we find for $\sigma = \frac{\eps}{\sqrt{2\delta}}$, $\beta_1 = \frac{1}{\sqrt{2\delta}} \theta^{hor}$ and $\beta_2 = \frac{1}{\sqrt{2\delta}} \theta^{ver}$ that
\[
\eps^2 \left( I_{\alpha,\eps}(u) - \min I_{\alpha,\eps} \right) \approx 2 \delta^2 \int_{(0,1)^2} \sigma^2 (\partial_1  \beta_1)^2 + \sigma^2 (\partial_2  \beta_2)^2 + \left( \beta_1^2 - 1 \right)^2 + \left( \beta_2^2 - 1 \right)^2 \, d\mathcal{L}^2.
\]
This is a version of the energy \eqref{eq: energy intro}.
Eventually, we note that the discrete condition \eqref{eq: discrete curl} translates for $\beta$ to a circulation condition of the form
\[
\int_{\partial \left( (x_1,x_1 + \eps) \times (x_2,x_2+\eps) \right)}
\beta \cdot \tau \, d\mathcal{H}^1 \in \sigma \left\{ -2\pi,0,2\pi \right\},
\]
where $x \in (0,1)^2$ and $\tau \in S^1$ is the positively oriented unit tangent. We will interpret this condition in the following as $\operatorname{curl } \beta = 2\pi \sigma \sum_{k} \gamma_k \delta_{x_k} * \rho_{\eps}$, where $x_k \in (0,1)^2$ and $\rho_{\eps}$ is a mollifier on scale $\eps$.

Next, we comment on the considered boundary conditions for $\beta$, i.e., $\beta_2(0,y) = 1-2 \theta$, where $\theta \in (0,1/2]$. Following the heuristic argument above, the boundary conditions translate into spin fields $u$ that rotate along the column $\{0\} \times \left([0,1) \cap \eps \Z \right) $ with the angular velocity $\theta^{ver} = (1-2\theta) \sqrt{2\delta} \approx (1-2\theta)  \arccos(1-\delta)$, where we used that $1 - \delta = \cos(\arccos(1-\delta)) \approx 1 - \frac{\arccos(1-\delta)^2}{2}$ implies that $\arccos(1-\delta) \approx \sqrt{2\delta}$. In particular, the prescribed angular velocity on the boundary is suboptimal, and we study in this work how this leads to the formation of patterns of the four different ground states (rotations with optimal angular velocity either clockwise or counter-clockwise in rows and columns) or the occurrence of vortices.

We close this section by interpreting the scaling law \eqref{eq: scaling law}  in the context of the discrete energy $F_{\alpha,\eps}$. 
As discussed below \eqref{eq: scaling law}, the third term of the scaling law corresponds to a regime in which vortices are present in optimal configurations.
Moreover, for $\theta = 1/2$ this term reads as 
\[
\theta \frac{\sigma^3}{\eps^2} + \theta \sigma |\log \theta| \sim \frac{\sigma^3}{\eps^2} + \sigma \sim \frac{\eps}{\delta^{3/2}}. 
\]
This is exactly the energy that was conjectured in \cite{GiKoZw} to be the energy scaling in the regime with vortices for the discrete energy. However, only an upper bound construction could be provided.
It is to be expected that the proof technique developed in this work can be modified to prove the full scaling law also in the discrete setting, at least for $\theta = 1/2$.

\subsection{Outline}

In the next section, we will introduce the mathematical setting and fix the notation of this paper. Moreover, we will present the main scaling law result, Theorem \ref{thm: main}.  In Section \ref{sec: prelims} we collect some preliminary results. Section \ref{sec: upper} is devoted to proving the upper bound of Theorem \ref{thm: main}, whereas the lower bound of Theorem \ref{thm: main} will be proven in Section \ref{sec: lower} 

\section{The mathematical setting and main result}\label{sec: main}

Let $\eps, \sigma, \theta > 0$ and $\Omega = (0,1)^2$. 
We start with defining the set of admissible vorticity measures
\begin{equation}\label{eq: vorticity measure}
    \mathcal{M}_{\sigma,\eps} := \left\{ \mu \in \mathcal{M}(\Omega): \mu = \sigma\sum_{i=1}^n \gamma_i  \, \delta_{x_i}*\rho_{\varepsilon},  n\in \N, \gamma_i \in \{\pm 1\}, \,  B_{\eps}(x_i) \subseteq \Omega \text{ and } B_{\varepsilon}(x_i) \cap B_{\eps}(x_j) = \emptyset \right\},
\end{equation}
where $\rho \in C^{\infty}_c(B_{\frac14}(0);[0,\infty))$ with $\int_{B_{1/4}(0)} \rho \, d\mathcal{L}^2 = 1$ is fixed  and $\rho_{\eps}(x) = \eps^{-2} \rho(x/\eps)$.
Next, we define the sets of admissible functions
\begin{align}\label{eq: admissible functions}
\mathcal{A}^{(1)}_{\sigma,\theta,\eps} &:= \left\{ \beta \in L^4(\Omega;\R^2): D\beta \in \mathcal{M}(\Omega;\R^{2\times 2}), \, \beta_2(0,\cdot) = 1-2\theta \text{ and } \operatorname{curl} \beta \in \mathcal{M}_{\sigma,\eps} \right\}, \\
\mathcal{A}^{(2)}_{\sigma,\theta,\eps} &:= \left\{ \beta \in L^4(\Omega;\R^2): D\beta \in L^2(\Omega;\R^{2\times 2}), \, \beta_2(0,\cdot) = 1-2\theta \text{ and } \operatorname{curl} \beta \in \mathcal{M}_{\sigma,\eps} \right\}, \\
\mathcal{A}^{(a)}_{\sigma,\theta,\eps} &:= \left\{ \beta \in L^4(\Omega;\R^2): \partial_1\beta_1, \partial_2 \beta_2 \in L^2(\Omega;\R^{2\times 2}), \, \beta_2(0,\cdot) = 1-2\theta \text{ and } \operatorname{curl} \beta \in \mathcal{M}_{\sigma,\eps} \right\}.
\end{align}
Note that the boundary value for $\beta_2$ in the sets $\mathcal{A}^{(1)}_{\sigma,\theta,\eps}$ and $\mathcal{A}^{(2)}_{\sigma,\theta,\eps}$ can be understood in the sense of traces for the spaces $BV((0,1);\R^2)$ and $W^{1,2}((0,1)^2;\R^2)$, respectively. In the set $\mathcal{A}^{(a)}_{\sigma,\theta,\eps}$ the boundary values can be understood in the sense of traces for $L^2$-functions whose $\operatorname{curl }$ is also in $L^2$, see \cite[Chapter IX., Part A, Theorem 2]{Lions3} or \cite[Chapter 4]{Boyer/Fabrie:2012}. 

Next, we define the energies $E^{(1)}_{\sigma,\theta,\eps}: \mathcal{A}_{\sigma,\theta,\eps}^{(1)} \to [0,\infty]$,  $E^{(2)}_{\sigma,\theta,\eps}: \mathcal{A}_{\sigma,\theta,\eps}^{(2)} \to [0,\infty]$ and $E^{(a)}_{\sigma,\theta,\eps}: \mathcal{A}_{\sigma,\theta,\eps}^{(a)} \to [0,\infty]$ as
\begin{align}\label{eq: def energy}
E^{(1)}_{\sigma,\theta,\eps}(\beta) &= \int_{\Omega} W(\beta) \, d\mathcal{L}^2 + \sigma |D \beta|(\Omega), \\
E^{(2)}_{\sigma,\theta,\eps}(\beta) &= \int_{\Omega} W(\beta) + \sigma^2 |D \beta|^2 \, d\mathcal{L}^2,\\
E^{(a)}_{\sigma,\theta,\eps}(\beta) &= \int_{\Omega} W(\beta) + \sigma^2 (\partial_1 \beta_1)^2 + \sigma^2 (\partial_2 \beta_2)^2 \, d\mathcal{L}^2,
\end{align}
where $W(\beta) = (1-\beta_1^2)^2 + (1-\beta_2^2)^2$.

The main theorem is the following scaling law for the infimal energies.

\begin{theorem}\label{thm: main}
    There exist constants $C \geq c > 0$ such that it holds for all $\sigma > \sqrt{2} \pi \eps >0$, $\theta \in (0,1/2)$ and $\epsilon \in \{1,2,a\}$ that 
    \[
    c \cdot s(\sigma, \eps, \theta) \leq \inf_{\beta \in \mathcal{A}_{\sigma,\theta,\eps}^{(\epsilon)}} E^{(\epsilon)}_{\sigma, \theta, \eps} \leq C \cdot s(\sigma, \eps, \theta),
    \]
    where $s(\sigma, \eps, \theta) = \min\left\{ \theta^2, \sigma \left( \frac{|\log \sigma|}{|\log \theta|} + 1 \right), \theta \frac{\sigma^3}{\eps^2} + \theta \sigma \log\left( \frac{\sigma}{\eps \theta} \right) \right\}$.
\end{theorem}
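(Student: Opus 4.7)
The argument splits naturally into an upper bound by explicit construction and an ansatz-free lower bound, carried out for the three energies $E^{(1)}, E^{(2)}, E^{(a)}$ in parallel.

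For the upper bound, I would produce three competitors and take the infimum. The scale $\theta^2$ is realized by the uniform field $\beta \equiv (1,1-2\theta)$, whose bulk equals $(1-(1-2\theta)^2)^2 \sim \theta^2$ while its curl and regularization vanish. The branching scale $\sigma(|\log\sigma|/|\log\theta|+1)$ is obtained by transplanting the self-similarly refining construction of \cite{GinZwi:22}, which yields a gradient field, hence curl-free and admissible in all three classes $\mathcal{A}^{(\epsilon)}_{\sigma,\theta,\eps}$. The third scale is realized by the vortex array already sketched in the introduction: $\sim 2\theta/\sigma$ positive vortices along $\{0\}\times(0,1)$ at spacing $\sigma/(2\theta)$, interpolated curl-freely outside $\eps$-neighborhoods of the cores. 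Making rigorous the profile $|\beta|\sim\min\{\sigma/\eps,\sigma/|x|\}$ and $|D\beta|\sim\min\{\sigma/\eps^2,\sigma/|x|^2\}$ around each vortex via convolution against $\rho_\eps$ and summing the contributions then gives the third scale for each of $E^{(1)},E^{(2)},E^{(a)}$.

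The lower bound I would attack via a dichotomy on $N := |\operatorname{curl}\beta|(\Omega)/\sigma$, the effective number of vortices. In the low-vorticity regime $N \lesssim \theta/\sigma$, I would partition $\Omega$ into horizontal strips, carve out $\eps$-neighborhoods of the vortex cores (whose total area is negligible by the spacing built into $\mathcal{M}_{\sigma,\eps}$), and reduce on the remainder to an essentially curl-free situation. The rigidity-based one-dimensional slicing arguments of \cite{GinZwi:22,GinZwi:23,Gi23} then yield $E^{(\epsilon)}(\beta) \gtrsim \min\{\theta^2,\sigma(|\log\sigma|/|\log\theta|+1)\}$; the delicate part is to absorb the excised vortex annuli into those arguments as controlled errors, using the circulation constraint on each ball to extend the slices across them. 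In the complementary high-vorticity regime $N\gtrsim\theta/\sigma$, the bound must match $\theta\sigma^3/\eps^2 + \theta\sigma\log(\sigma/(\eps\theta))$, and I would split the analysis into a near-vortex and a far-field contribution. Locally, the concentration of $\pm\sigma$ of $\operatorname{curl}\beta$ on each $B_\eps(x_i)$ forces $|D\beta|\gtrsim\sigma/\eps^2$ there, contributing $\sim\sigma^3/\eps^2$ per vortex and summing to $\theta\sigma^3/\eps^2$. The logarithmic summand is the output of the modified ball construction of Proposition \ref{prop: ball constr}, applied to disjoint annuli grown from the vortex clusters: on each dyadic annulus of radius $r$ the circulation, combined with either $W(\beta)$ or the higher-order regularizer, must produce energy density of order $\sigma^2/r^2$, and integrating in $r$ from $\eps$ to $\sigma/\theta$ yields the logarithm.

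The main obstacle is precisely this last step. In the classical Jerrard/Sandier ball construction for Ginzburg--Landau and its dislocation analogues, rigidity ensures that a field avoiding the vortex set is essentially $S^1$-valued, so its circulation alone controls the kinetic energy from below. In our setting a field can lie entirely in $W^{-1}(0)$ on an annulus while still carrying nonzero flux, so no such rigidity is available; Proposition \ref{prop: ball constr} is designed precisely to compensate for this by tracking $\int W(\beta)$ and the higher-order regularizer simultaneously, so that any rerouting of flux between the four wells is paid for through surface energy. Uniformity across $\epsilon\in\{1,2,a\}$ is then recovered from the pointwise comparisons between the three regularizers together with a Young-type exchange between the $L^1$ and $L^2$ formulations of the regularization.
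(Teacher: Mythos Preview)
Your upper-bound plan matches the paper's. The lower-bound plan is in the right spirit but has two genuine gaps.

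\textbf{Core energy.} You write that the concentration of $\operatorname{curl}\beta$ on $B_\eps(x_i)$ forces $|D\beta|\gtrsim\sigma/\eps^2$ and that this contributes $\sigma^3/\eps^2$ per vortex. First, the arithmetic does not close: with $N\sim\theta/\sigma$ vortices you need $\sigma^4/\eps^2$ per vortex to reach $\theta\sigma^3/\eps^2$. Second, and more importantly, the regularizer alone does not deliver this. For $E^{(1)}$ the bound $|D\beta|\geq|\operatorname{curl}\beta|/\sqrt2$ only gives $\sigma\int_{B_\eps}|D\beta|\gtrsim\sigma^2$ per vortex, and for $E^{(a)}$ it gives nothing at all, since $\operatorname{curl}\beta=\partial_1\beta_2-\partial_2\beta_1$ places no constraint on $\partial_1\beta_1$ or $\partial_2\beta_2$. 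The paper extracts the core energy from the \emph{bulk} term instead (Lemma~\ref{lem: lb vortex general}): the circulation forces $\int_{\partial B_r}\beta\cdot\tau=\sigma$ for $r\in(\eps/4,\eps)$, so by Jensen $\int_{\partial B_r}|\beta|^4\gtrsim\sigma^4/r^3$, and since $W(\beta)\gtrsim|\beta|^4-C$ this integrates to $\int_{B_\eps}W(\beta)\gtrsim\sigma^4/\eps^2$. This works uniformly for all three energies.

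\textbf{Anisotropic case.} There is no pointwise comparison bounding $|D\beta|^2$ by $(\partial_1\beta_1)^2+(\partial_2\beta_2)^2$; the field $\beta=(y,-x)$ already shows this. The paper instead proves an interior elliptic estimate (Lemma~\ref{lemma: est elliptic}, essentially $\int|D\beta|^2\leq C\int(\partial_1\beta_1)^2+(\partial_2\beta_2)^2+|\beta|^2+|\operatorname{curl}\beta|^2$ via integration by parts), then absorbs the $|\beta|^2$ and $|\operatorname{curl}\beta|^2$ errors into $W(\beta)$ using Lemma~\ref{lem: lb vortex general} again, thereby reducing $E^{(a)}$ on $\Omega$ to $E^{(2)}$ on a slightly smaller domain.

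\textbf{Dichotomy and the logarithm.} Your global dichotomy on $N=|\operatorname{curl}\beta|(\Omega)/\sigma$ does not feed the ball construction, which only sees the \emph{net} circulation $\operatorname{curl}\beta(B)$ in each grown ball; a large $N$ made of cancelling dipoles yields nothing. The paper runs the dichotomy locally (Proposition~\ref{prop: log}): on a well-chosen rectangle $(0,x)\times I$ with $|I|\sim\min\{\sigma|\log\theta|/\theta,1\}$, either the net curl there is $\leq\theta|I|$, in which case the boundary incompatibility together with the slicing argument gives $E\gtrsim\theta^2\ell$, or the net curl exceeds $\theta|I|$, and then the ball construction on that rectangle (with $T\sim|\log\theta|$) yields $E\gtrsim\sigma\theta|\log\theta|$. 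The remaining terms $\min\{\theta^2,\sigma(|\log\sigma|/|\log\theta|+1),\theta\sigma^3/\eps^2\}$ are obtained separately, always by first assuming $E$ is small, using the core estimate to cap $N$, and then running curl-corrected versions of the slicing arguments from \cite{GinZwi:22}.
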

\begin{proof}
    The upper bound follows from Proposition \ref{prop: upper} whereas the lower bound follows from Proposition \ref{prop: main lower bound} and Proposition \ref{prop: lower aniso}.
\end{proof}

\subsection{Notation}

Throughout the text, we denote by $c$ and $C$ generic constants that may change from expression to expression and do not depend on the problem parameters.
Moreover, we will identify $\mathbb{C}$ with $\R^2$ and denote by $e_1$ and $e_2$ the two canonical basis vectors for $\R^2$. 
In the absence of ambiguities, we will not distinguish between row and column vectors.

For a measurable set $B\subseteq \R^n$ with $n=1,2$, we use the notation $| B| $ or $\calL^n(B)$ to denote its $n$-dimensional Lebesgue measure. 

The set of preferred values for $\beta$, $K \subseteq \R^2$, is defined as 
\[
K = \left\{ \begin{pmatrix} 1 \\ 1 \end{pmatrix}, \begin{pmatrix} 1 \\ -1 \end{pmatrix}, \begin{pmatrix} -1 \\ 1 \end{pmatrix}, \begin{pmatrix} -1 \\ -1 \end{pmatrix}   \right\}.
\]
Throughout this manuscript, we will occasionally write for $\xi \in \R$ the expression $|\xi \pm 1|$ instead of $\min\{ |\xi - 1|, |\xi + 1|\}$. 

For $B \subseteq \R^2$ open and $\beta \in L^4(B;\R^2) \cap BV(B;\R^2)$, we use the notation $E_{\sigma,\theta,\eps}^{(1)}(\beta;B)$ for the energy on $B$, i.e.,
\begin{align}\label{eq:restrenergy}
&E_{\sigma,\theta,\eps}^{(1)}(\beta;B) := 
\int_{B} W(\beta) \, d\mathcal{L}^2 + \sigma | D \beta| (B). 
\end{align}
In addition for $x \in (0,1)$ and $I\subseteq (0,1)$ Lebesgue-measurable, we write for $\beta \in \mathcal{A}^{(1)}_{\sigma,\theta,\eps}$
\[
E_{\sigma,\theta,\eps}^{(1)}(\beta;\{x\} \times I) = \int_{I} W(\beta(x,y)) dy + |\partial_1 \beta(x,\cdot)|(I).
\]
Note that since $\beta \in BV((0,1)^2)$ this formula makes sense for almost every $x \in (0,1)$ in the sense of slicing of $BV$-functions, see \cite{ambrosio-et-al:00}. Similarly, we write for $y \in (0,1)$ and $\beta \in \mathcal{A}^{(1)}_{\sigma,\theta,\eps}$
\[
E_{\sigma,\theta,\eps}(\beta;I \times \{y\}) := \int_{I} W(\beta(x,y)) \, dx + |\partial_2 \beta(\cdot,y)|(I).
\]
We use the analogous notation also for the energies $E^{(2)}_{\sigma,\theta,\eps}$ and $E^{(a)}_{\sigma,\theta,\eps}$.

Eventually, we will denote with $\Phi: \R \to \R$  the function which is uniquely determined by 
\begin{equation}\label{eq: def phi}
\Phi'(t) = |1-t^2|  \qquad \text{ and } \Phi(0) = 0. 
\end{equation}

\section{Preliminaries}\label{sec: prelims}

We start by proving that $W$ essentially behaves like the function $\max\{ \operatorname{dist}(\beta,K)^4, \operatorname{dist}(\beta,K)^2 \}$.

\begin{lemma}\label{lemma: est W}
    It holds for $W: \R^2 \to [0,\infty)$, $W(\beta) = (1-\beta_1^2)^2 + (1-\beta_2^2)^2$, that
    \[
    \max\{ \frac12 \operatorname{dist}(\beta,K)^4, \operatorname{dist}(\beta,K)^2 \} \leq W(\beta) \leq 18 \max\{ \operatorname{dist}(\beta,K)^4, \operatorname{dist}(\beta,K)^2 \} \text{ for all } \beta \in \R^2.
    \]
\end{lemma}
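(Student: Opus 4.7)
The plan is to reduce both inequalities to elementary algebra after a convenient change of variables. Since $K$ consists of the four sign vectors $(\pm 1, \pm 1)$, the nearest point in $K$ to $\beta = (\beta_1, \beta_2)$ is $(\operatorname{sign}\beta_1, \operatorname{sign}\beta_2)$, so
\[
d^2 := \operatorname{dist}(\beta, K)^2 = (1-|\beta_1|)^2 + (1-|\beta_2|)^2.
\]
I would introduce $t_i := 1 - |\beta_i|$, which gives $d^2 = t_1^2 + t_2^2$ as well as the identity $1 - \beta_i^2 = t_i(2 - t_i)$, so that
\[
W(\beta) = t_1^2(2-t_1)^2 + t_2^2(2-t_2)^2.
\]
The whole argument then rests on the simple observation that $t_i \leq 1$ for every $\beta_i \in \R$ (since $|\beta_i|\geq 0$), and hence $(2-t_i)^2 \geq 1$ and equivalently $(2-t_i)^2 \geq t_i^2$ (the latter being the same as $t_i \leq 1$).

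For the lower bound, the estimate $(2-t_i)^2 \geq 1$ immediately gives $W(\beta) \geq t_1^2 + t_2^2 = d^2$. Combining instead $(2-t_i)^2 \geq t_i^2$ with the elementary inequality $t_1^4 + t_2^4 \geq \tfrac12(t_1^2 + t_2^2)^2$ yields $W(\beta) \geq \tfrac12 d^4$, so together $W(\beta) \geq \max\{d^2, \tfrac12 d^4\}$.

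For the upper bound, I would bound $(2-t_i)^2$ from above by $9 \max\{1, t_i^2\}$: if $|t_i|\leq 1$ then $|2-t_i|\leq 3$, while if $|t_i|\geq 1$ then $|2-t_i|\leq 3|t_i|$. Using $\max\{1, t_i^2\}\leq 1 + d^2$, this leads to
\[
W(\beta) \leq 9(1+d^2)(t_1^2 + t_2^2) = 9 d^2 + 9 d^4 \leq 18 \max\{d^2, d^4\},
\]
which is exactly the claimed bound. The argument is purely algebraic and I do not anticipate a real obstacle; one just has to remember that $t_i$ is bounded above by $1$ but can be arbitrarily negative, which is precisely why the case split $|t_i|\leq 1$ versus $|t_i|\geq 1$ is needed in the upper bound.
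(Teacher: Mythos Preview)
Your proof is correct and follows essentially the same approach as the paper: both exploit the factorization $(1-\beta_i^2)^2 = (1-|\beta_i|)^2(1+|\beta_i|)^2$, bound the factor $(1+|\beta_i|)^2$ from below by $\max\{1,(1-|\beta_i|)^2\}$ for the lower inequality, and from above by $9\max\{1,(1-|\beta_i|)^2\}$ via the same case split for the upper inequality. Your substitution $t_i = 1-|\beta_i|$ and the final step $9d^2(1+d^2)\le 18\max\{d^2,d^4\}$ package the argument slightly more compactly than the paper, but the underlying computation is the same.
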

\begin{proof}
First, let $s \geq 0$. 
Then
\[
(1-s^2)^2 = (1-s)^2 (1+s)^2 \geq \max\left\{ (1-s)^4, (1-s)^2 \right\}.
\]
Therefore, it holds for $\beta \in \R^2$ that
\begin{align}
    W(\beta) &= (1-\beta_1^2)^2 + (1-\beta_2^2)^2 \\ 
    &\geq \max\left\{ \min |\beta_1 \pm 1|^2 + \min |\beta_2 \pm 1|^2, \min |\beta_1 \pm 1|^4 + \min |\beta_2 \pm 1|^4  \right\} \\
    &\geq  \max\bigg\{ \min |\beta_1 \pm 1|^2 + \min |\beta_2 \pm 1|^2, \\ &  \qquad \frac12 \left(\min |\beta_1 \pm 1|^4 + 2 \left(\min |\beta_2 \pm 1|^2\right) \left(\min |\beta_2 \pm 1|^2 \right) + \min |\beta_2 \pm 1|^4 \right) \bigg\} \\
    &= \max\left\{ \operatorname{dist}(\beta,K)^2,\frac12  \operatorname{dist}(\beta,K)^4  \right\}.
    \end{align}
    This shows the claimed lower bound.
    
Next, note that it holds for $s \geq 2$ that $(1+s)^2 = (s - 1 + 2)^2 = (s-1)^2 + 4 (s-1) + 4 \leq 9(s-1)^2$. Consequently, we find for $s \geq 0$
\[
(1-s^2)^2 = (1-s)^2 (1+s)^2 \leq 9 (1-s)^2 \mathbf{1}_{\{0 \leq s < 2\}} +  9 (1-s)^4 \mathbf{1}_{\{s > 2\}} \leq 9 \max\{ (s-1)^2, (s-1)^4\}.
\]
Hence, it follows by symmetry for $\beta \in \R^2$
    \begin{align}
    W(\beta) &= (1-\beta_1^2)^2 + (1-\beta_2^2)^2 \\ 
    &\leq 18 \max\left\{ \min |\beta_1 \pm 1|^2 +  \min |\beta_2 \pm 1|^2, \min |\beta_1 \pm 1|^4 + \min |\beta_2 \pm 1|^4\right\} \\
    &\leq 18 \max\left\{ \operatorname{dist}(\beta,K)^2, \operatorname{dist}(\beta,K)^4\right\}.
    \end{align}
\end{proof}

Similarly, we show some growth estimates for the function $\Phi$, cf.~\eqref{eq: def phi}.

\begin{lemma}\label{lem: properties phi}
    It holds for all $x,y \in \R$ that
    \[
    \frac18 |x-y|^2 \leq |\Phi(y) - \Phi(x)| \text{ and } |\Phi(x) - \Phi(a)| \leq 4 \left( |x - a| + |x - a|^3 \right) \text{ for } a \in \{\pm 1\}.
    \]
\end{lemma}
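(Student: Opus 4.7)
The plan is to handle the two inequalities separately, both reducing to elementary computations via the factorization $|1-t^2| = |1-t|\,|1+t|$.

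For the lower bound, by monotonicity of $\Phi$ we may assume $x \leq y$, so that $|\Phi(y)-\Phi(x)| = \int_x^y |1-t^2|\,dt$. The first step is the pointwise estimate
\[
|1-t^2| \geq d(t) := \operatorname{dist}(t, \{\pm 1\}) \qquad \text{for all } t \in \R,
\]
obtained from the factorization by considering $|t| \leq 1$ (where $1+|t| \geq 1$ and $d(t) = 1-|t|$) and $|t| \geq 1$ (where $|t|+1 \geq 2$ and $d(t) = |t|-1$). It then suffices to show $\int_x^y d(t)\,dt \geq (y-x)^2/8$. I would prove this by minimizing the left-hand side over $x$ at fixed length $\ell = y-x$: since $d$ is piecewise linear with slopes $\pm 1$ and zeros at $\pm 1$, the stationarity condition $d(x) = d(x+\ell)$ together with the behavior at infinity yields the minimum $\ell^2/4$ for $\ell \leq 2$ (extremal intervals centered on $\pm 1$, giving $\int |t\mp 1|\,dt = \ell^2/4$) and $\ell^2/4 - \ell + 2$ for $\ell \geq 2$ (extremal interval $[-\ell/2, \ell/2]$ straddling both zeros). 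Both dominate $\ell^2/8$: the first trivially, the second via the equivalent $(\ell-4)^2 \geq 0$.

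For the upper bound, by the symmetry $t \mapsto -t$ it suffices to treat $a = 1$. The triangle inequality $|1+t| \leq 2 + |t-1|$ applied to the same factorization gives $|1-t^2| \leq 2|t-a| + |t-a|^2$, hence
\[
|\Phi(x) - \Phi(a)| = \biggl|\int_a^x |1-s^2|\,ds\biggr| \leq \int_0^{|x-a|} (2u + u^2)\,du = |x-a|^2 + \tfrac{1}{3}|x-a|^3.
\]
The leftover quadratic term is absorbed into the desired mixed right-hand side by the AM-GM estimate $|x-a|^2 \leq \tfrac{1}{2}\bigl(|x-a| + |x-a|^3\bigr)$, which then delivers the bound $|\Phi(x) - \Phi(a)| \leq 4(|x-a| + |x-a|^3)$ with room to spare in the constant.

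The main obstacle is the geometric inequality $\int_x^y d(t)\,dt \geq (y-x)^2/8$: the optimal constant and the extremal configuration change qualitatively with the interval length (centered on a single zero for short intervals, symmetric about the origin and straddling both zeros for long ones), so the proof must compare these two candidate extremal positions explicitly. Once this stationarity analysis is carried out, both inequalities follow from short bookkeeping.
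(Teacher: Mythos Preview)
Your proposal is correct, and both inequalities are handled by genuinely different arguments than the paper's.

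For the lower bound, the paper proceeds by a four-case analysis on the position of $x,y$ relative to $0$ and $1$: it integrates $|1-t^2|$ explicitly on $[x,y]\subseteq[0,1]$ and on $[x,y]\subseteq[1,\infty)$ to get $\frac12(y-x)^2$, then combines these via convexity of $t\mapsto t^2$ to handle intervals crossing $1$ (constant $\frac14$) and finally crossing $0$ (constant $\frac18$). Your route instead discards information via the pointwise bound $|1-t^2|\ge \operatorname{dist}(t,\{\pm1\})$ and then solves the clean translation-minimization problem for the piecewise-linear integrand. The paper's argument is more direct and keeps track of slightly sharper constants on subcases; your argument is more conceptual and makes the extremal configurations (interval centered at a zero of $\Phi'$ for short lengths, symmetric about the origin for long ones) transparent, at the price of having to justify that these are the only competitors for the minimum.

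For the upper bound, the paper uses the crude pointwise estimate $|1-t^2|\le 1+t^2$, integrates to get $|a-x|+\frac13|a^3-x^3|$, and then splits into $|x|\le 2$ versus $|x|>2$ to absorb the cubic term. Your factorization $|1-t^2|\le 2|t-a|+|t-a|^2$ is sharper because it is centered at $a$, so after integration and the AM--GM step $|x-a|^2\le\frac12(|x-a|+|x-a|^3)$ you obtain $|\Phi(x)-\Phi(a)|\le |x-a|^2+\frac13|x-a|^3\le \frac12|x-a|+\frac56|x-a|^3$, i.e.\ the stated inequality with constant $1$ rather than $4$, and without any case distinction.
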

\begin{proof}
    Let $x,y \in \R$ with $x \leq y$. We distinguish the following cases:
    \begin{enumerate}
        \item Assume $0\leq x \leq y \leq 1$. Then we estimate
        \begin{align}
        |\Phi(y) - \Phi(x)| = \int_x^y 1-t^2 \, dt \geq \int_x^y 1-t \, dt = (y-x) - \frac12 (y^2 - x^2) &= \frac12 (y-x) \left( 2 - x - y \right) \\ & \geq \frac12 (y-x)^2,
        \end{align}
        since $y \leq 1$.
        \item Assume $1 \leq x \leq y$. Then we estimate
        \begin{align}
            |\Phi(y) - \Phi(x)| \geq  \int_x^y t - 1 \, dt = \frac12 (y-x) \left( y + x - 2 \right) \geq \frac12 (y-x)^2,
        \end{align}
        since $x \geq 1$.
        \item Assume $0\leq x \leq 1 \leq y$. Then we estimate using the monotonicity of $\Phi$, (1), (2) and the convexity of quadratic functions
        \begin{align}
            |\Phi(y) - \Phi(x)| = |\Phi(y) - \Phi(1)| + |\Phi(1) - \Phi(x)| \geq \frac12 (y-1)^2 + \frac12 (1-x)^2 \geq  \frac14 (y-x)^2.
        \end{align}
        \item Eventually, assume $x \leq 0 \leq y$. Then we estimate using (1),(2), (3), the symmetry of these estimates and the convexity of quadratic functions
        \begin{align}
            |\Phi(y) - \Phi(x)| = |\Phi(y) - \Phi(0)| - |\Phi(0) - \Phi(x)| \geq \frac14 y^2 + \frac14 (-x)^2 \geq \frac12 \left( \frac12 y - \frac12 x \right)^2 = \frac18 (y-x)^2.
        \end{align}
    \end{enumerate}
            By symmetry, this concludes the proof of the first inequality.
            
    For the second inequality, we estimate for $x \in \R$ and $a \in \{\pm 1\}$
    \begin{align}
        |\Phi(x) - \Phi(a)| &= \left| \int_x^a |1-t^2| \, dt \right| \\
        &\leq \left|\int_x^a 1 + t^2 \, dt \right| = |a-x| + \frac13 \left| a^3 - x^3 \right| \leq |a-x| + \frac13 |a-x| \left( a^2 + |x| |a| + x^2 \right).
    \end{align}
    Now we distinguish two cases. If $|x| \geq 2$ then it holds $|a| \leq |x| \leq |x-a| + |a| \leq 2 |x-a|$ and therefore
    \[
    |\Phi(x) - \Phi(a)| \leq |a-x| + \frac13 |a-x| \left( a^2 + |x| |a| + x^2 \right) \leq |a-x| + 4 |a-x|^3.
    \]
    On the other hand, if $|x| \leq 2$ then it holds 
    \[
    |\Phi(x) - \Phi(a)| \leq |a-x| + \frac13 |a-x| \left( a^2 + |x| |a| + x^2 \right) \leq (1 + 7/3) |a-x| \leq 4 |a-x|.
    \]
\end{proof}

We finish this section with a simple lemma to rewrite certain integrals over $\beta$.

\begin{lemma}\label{lemma: rewrite}
    Let $\beta \in \mathcal{A}^{(\epsilon)}_{\sigma,\theta,\eps}$ for $\epsilon \in \{\pm 1\}$. Then it holds for a.e.~$x \in (0,1)$ and a.e.~$0 < y_1 < y_2 <1$ that
    \begin{align}
        &\int_{y_1}^{y_2} \beta_2(x,t) \, dt - (1-2\theta) (y_2-y_1) \\
        = &\int_{0}^x \beta_1(s,y_2) - \beta_1(s,y_1) \, ds + \int_{(0,x) \times (y_1,y_2)} \operatorname{curl } \beta \, d\mathcal{L}^2.
    \end{align}
\end{lemma}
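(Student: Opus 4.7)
The plan is to interpret the identity as Stokes' theorem on the rectangle $R:=(0,x)\times(y_1,y_2)$ applied to the vector field $\beta$, combined with the prescribed boundary trace. For smooth $\beta$ one has, writing $\tau$ for the positively oriented unit tangent,
\begin{align*}
\int_{R}\operatorname{curl}\beta\,d\mathcal L^2 &= \int_{\partial R}\beta\cdot\tau\,d\mathcal H^1 \\
&= \int_{0}^{x}\beta_1(s,y_1)\,ds - \int_{0}^{x}\beta_1(s,y_2)\,ds + \int_{y_1}^{y_2}\beta_2(x,t)\,dt - \int_{y_1}^{y_2}\beta_2(0,t)\,dt.
\end{align*}
Inserting the boundary condition $\beta_2(0,\cdot)=1-2\theta$ on the leftmost edge, so that $\int_{y_1}^{y_2}\beta_2(0,t)\,dt=(1-2\theta)(y_2-y_1)$, and rearranging yields exactly the claimed identity.

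To justify this for the rough admissible fields, I would mollify: extend $\beta$ across the left edge of $\Omega$ consistently with its trace $1-2\theta$, set $\beta^{\eta}:=\beta*\rho_\eta$ on an interior subset, and apply the classical Green identity to each smooth $\beta^{\eta}$. It then remains to pass to the limit $\eta\to 0$ in every term. The area integral is harmless because $\operatorname{curl}\beta$ is represented by the smooth density $\sigma\sum_i\gamma_i\rho_\eps(\cdot-x_i)\in L^\infty(\Omega)$, so $\operatorname{curl}\beta^{\eta}\to\operatorname{curl}\beta$ in $L^1(R)$. For the four line integrals, I would invoke the slicing theory of $BV$ (in the case $\epsilon=1$) or $W^{1,2}$ Fubini (in the case $\epsilon=2$) to conclude that for a.e.\ $y_1,y_2\in(0,1)$ and a.e.\ $x\in(0,1)$ the one-dimensional slices $s\mapsto\beta_1^{\eta}(s,y_j)$ and $t\mapsto\beta_2^{\eta}(x,t)$ converge in $L^1$ to the corresponding slices of $\beta$. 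The trace of $\beta_2^{\eta}(0,\cdot)$ converges in $L^1(y_1,y_2)$ to $1-2\theta$ by continuity of the trace operator on the relevant space.

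The only mildly subtle case is $\mathcal{A}^{(a)}_{\sigma,\theta,\eps}$, where only $\partial_1\beta_1$ and $\partial_2\beta_2$ lie in $L^2$ while the cross-derivatives are not directly controlled. Here I would use that $\beta\in L^2(\Omega;\mathbb R^2)$ with $\operatorname{curl}\beta\in L^2(\Omega)$, so by the trace theorem for $L^2$-fields with $L^2$-curl cited in the paper (\cite[Chapter IX, Part A, Theorem 2]{Lions3} or \cite[Chapter 4]{Boyer/Fabrie:2012}) the normal trace on $\{0\}\times(0,1)$, i.e.\ $\beta_2(0,\cdot)$, is well defined and stable under mollification. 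The horizontal slices $\beta_1^{\eta}(\cdot,y_j)$ converge in $L^1(0,x)$ for a.e.\ $y_j$ by Fubini applied to $\beta_1\in L^4$, and similarly for $\beta_2^{\eta}(x,\cdot)$ using $\partial_2\beta_2\in L^2$. This is the only step requiring more than bookkeeping, but once the appropriate trace theorem is in hand the passage to the limit is immediate and the rearrangement concludes the proof.
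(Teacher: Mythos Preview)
Your proposal is correct and follows essentially the same approach as the paper: mollify to reduce to smooth fields, then apply Green's theorem on the rectangle $(0,x)\times(y_1,y_2)$ and insert the boundary condition $\beta_2(0,\cdot)=1-2\theta$. The paper carries out the Green identity by hand (fundamental theorem of calculus in $x$, then write $\partial_1\beta_2=\partial_2\beta_1+\operatorname{curl}\beta$, then FTC in $y$) and dismisses the limit passage with a single phrase, whereas you spell out the slice-wise convergence; but the content is the same. Your extra discussion of the anisotropic case $\epsilon=a$ goes beyond what the lemma (as stated for $\epsilon\in\{1,2\}$) requires, yet does no harm.
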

\begin{proof}
    By a convolution argument, we may assume that $\beta$ is smooth. Then we compute
    \begin{align}
        &\int_{y_1}^{y_2} \beta_2(x,t) \, dt - (1-2\theta) (y_2-y_1) \\
        =& \int_{0}^x \int_{y_1}^{y_2} \partial_1 \beta_2(s,t) \, ds dt \\
        =& \int_0^x \int_{y_1}^{y_2} \partial_2 \beta_1(s,t) \, ds dt + \int_{0}^x \int_{y_1}^{y_2} \operatorname{curl } \beta(s,t) \, ds dt  \\
        =& \int_{0}^x \beta_1(s,y_2) - \beta_1(s,y_1) \, ds + \int_{(0,x) \times (y_1,y_2)} \operatorname{curl } \beta \, d\mathcal{L}^2.
    \end{align}
\end{proof}

\section{Upper Bound}\label{sec: upper}

In this section we prove the upper bound of Theorem \ref{thm: main}.

\begin{proposition}\label{prop: upper}
    There exists $C>0$ such that it holds for all $\sigma > \eps > 0$, $\theta \in (0,1/2)$ and $\epsilon \in \{1,2,a\}$ that
    \[
    \inf E^{(\epsilon)}_{\sigma, \theta, \eps} \leq C \min\left\{ \theta^2, \sigma \left( \frac{|\log \sigma|}{|\log \theta|} + 1 \right), \theta \frac{\sigma^3}{\eps^2} + \theta \sigma |\log \theta |  \right\}.
    \]
\end{proposition}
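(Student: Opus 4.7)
The strategy is to exhibit three explicit competitors $\beta^{(1)},\beta^{(2)},\beta^{(3)}\in\bigcap_{\epsilon}\mathcal{A}^{(\epsilon)}_{\sigma,\theta,\eps}$, one realising each term of $s(\sigma,\eps,\theta)$, and to use $\inf E^{(\epsilon)}\le\min_{j}E^{(\epsilon)}(\beta^{(j)})$. For each competitor I need to verify the boundary condition $\beta_2(0,\cdot)=1-2\theta$, that $\operatorname{curl}\beta\in\mathcal{M}_{\sigma,\eps}$, the required regularity, and the claimed energy scaling for all three functionals simultaneously.

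First I would take the constant competitor $\beta^{(1)}\equiv(1,1-2\theta)^T$: the boundary trace is immediate, $\operatorname{curl}\beta^{(1)}=0\in\mathcal{M}_{\sigma,\eps}$ (empty sum in \eqref{eq: vorticity measure}), and $D\beta^{(1)}=0$, so every regularising term vanishes and $E^{(\epsilon)}_{\sigma,\theta,\eps}(\beta^{(1)})=\int_\Omega(4\theta(1-\theta))^2\,d\mathcal{L}^2\le16\theta^2$. Second, for the logarithmic term I would take a curl-free branching competitor $\beta^{(2)}=\nabla u$ from \cite{GinZwi:22, Gi23}, which is automatically admissible since $\operatorname{curl}\nabla u=0$. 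For $E^{(1)}$ the bound is the one already established in \cite{GinZwi:22}. For $E^{(2)}$ and $E^{(a)}$ I would mollify the piecewise-affine $u$ on the scale $\sigma$ at each interface: on each transition layer $|D\beta^{(2)}|\lesssim 1/\sigma$ on a set of thickness $\sigma$, so $\int\sigma^2|D\beta^{(2)}|^2$ and $\sigma|D\beta^{(2)}|(\Omega)$ agree up to a constant; for $E^{(a)}$ I use that the branching pattern oscillates $\beta_2=\partial_2 u$ in the $y$-direction, so $\partial_2\beta_2=\partial_2^2 u$ after mollification carries the same $L^2$-cost and the non-penalised cross derivatives are irrelevant.

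The genuinely new construction is the vortex competitor. Set $d:=\sigma/(2\theta)$, place $N:=\lfloor 1/d\rfloor$ positive vortices at $x_k:=(d/2,(k-\frac{1}{2})d)$, and define
\[
\beta^{(3)}(x):=\sigma\sum_{k=1}^{N}(K*\rho_\eps)(x-x_k)+(1,1-\theta)+\beta^{\mathrm{cf}}(x),
\]
where $K(y)=\frac{1}{2\pi|y|^2}(-y_2,y_1)$ is the Biot--Savart kernel and $\beta^{\mathrm{cf}}$ is a curl-free corrector described below. The hypothesis $\sigma>\sqrt{2}\pi\eps$ together with $\theta<1/2$ gives $d>2\eps$ and $d/2>\eps$, so the $\eps$-balls $B_\eps(x_k)$ are pairwise disjoint and contained in $\Omega$, hence $\operatorname{curl}\beta^{(3)}=\sigma\sum_k\delta_{x_k}*\rho_\eps\in\mathcal{M}_{\sigma,\eps}$. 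The standard bounds $|\beta^{(3)}-(1,1-\theta)-\beta^{\mathrm{cf}}|\lesssim \sigma/\eps$ on $B_\eps(x_k)$ and $\lesssim\sigma/|x-x_k|$ on $B_{d/2}(x_k)\setminus B_\eps(x_k)$ hold for the mollified kernel. The identity $\sum_{j\in\mathbb{Z}}(a^2+(y-jd)^{-2})^{-1}=\frac{\pi}{ad}\coth(\pi a/d)$ shows that the periodic analogue of $\beta^{(3)}_2$ takes the asymptotic values $1-2\theta$ on $\{x_1=0\}$ and $1$ on $\{x_1\gg d\}$. For the actual finite line the deviation is localised near the two corners of $\{0\}\times(0,1)$, and I take $\beta^{\mathrm{cf}}$ to solve the corresponding curl-free inhomogeneous boundary problem in two corner strips of diameter $d$. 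The energy estimate follows the introduction's heuristic: by Lemma~\ref{lemma: est W}, splitting into cores ($W\lesssim\sigma^4/\eps^4$), annuli ($W\lesssim\sigma^2/|x-x_k|^2$), and the complement (where $\beta$ is close to $(1,1)$ or $(1,1-2\theta)$, so $W\lesssim\theta^2$ on area $\lesssim d$) and summing over the $N\sim 2\theta/\sigma$ vortices gives
\[
\int_\Omega W(\beta^{(3)})\,d\mathcal{L}^2\lesssim \frac{\theta}{\sigma}\Bigl(\frac{\sigma^4}{\eps^2}+\sigma^2\log\frac{d}{\eps}\Bigr)+\theta\sigma \lesssim \theta\frac{\sigma^3}{\eps^2}+\theta\sigma|\log\theta|,
\]
using $\log(\sigma/(\eps\theta))\le|\log\theta|+\sigma^2/\eps^2$; the analogous polar split for $|D\beta^{(3)}|$ and $|D\beta^{(3)}|^2$ gives the bounds in the introduction and therefore the claim for all three functionals.

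The main technical obstacle, in my view, is matching the boundary condition exactly with only \emph{finitely} many vortices: the periodic-line calculation only yields $\beta_2(0,y)\to 1-2\theta$ in the sense of $y$-averages, and near the corners $(0,0)$ and $(0,1)$ the correction is of order $\theta$ on strips of diameter $d$. I would handle this by defining $\beta^{\mathrm{cf}}$ as the unique $L^2$-minimal curl-free field on those two corner strips with prescribed normal component on $\{x_1=0\}$ and vanishing away from the strips; a direct estimate then yields $|\beta^{\mathrm{cf}}|\lesssim\theta$, $|D\beta^{\mathrm{cf}}|\lesssim\theta/d$, and support area $\lesssim d\sim\sigma/\theta$, so its bulk and regularising contributions are each $O(\theta\sigma)$, safely absorbed into the already-achieved $\theta\sigma|\log\theta|$.
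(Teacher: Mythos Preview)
Your treatment of the first two competitors agrees with the paper (constant field; branching construction cited from \cite{GinZwi:22}). For the vortex competitor you take a genuinely different route, and there is a concrete gap.

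The summation identity you invoke is false as stated: the periodic sum $\sum_{j\in\mathbb{Z}}\bigl(a^2+(y-jd)^{2}\bigr)^{-1}$ equals $\dfrac{\pi}{ad}\cdot\dfrac{\sinh(2\pi a/d)}{\cosh(2\pi a/d)-\cos(2\pi y/d)}$ and hence depends on $y$. Since your vortex line sits at distance $a=d/2$ from the boundary, the ratio $a/d=1/2$ is not large and the $y$-dependence is of order one. Consequently the trace $\beta^{(3)}_2(0,\cdot)$ of the (mollified) Biot--Savart sum oscillates with period $d$ and amplitude of order $\theta$ along the \emph{entire} left boundary, not only near the two corners. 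Your corrector $\beta^{\mathrm{cf}}$ therefore cannot be supported on two corner patches of area $\lesssim d$; it must live on a full strip of width $\sim d$ along $\{0\}\times(0,1)$. The energy heuristics you quote ($|\beta^{\mathrm{cf}}|\lesssim\theta$, $|D\beta^{\mathrm{cf}}|\lesssim\theta/d$) are in fact compatible with such a strip corrector and still yield a contribution $\lesssim\theta\sigma$, so the approach is salvageable---but the argument as written does not secure the exact boundary condition $\beta_2(0,\cdot)=1-2\theta$, and the description of $\beta^{\mathrm{cf}}$ as ``the unique $L^2$-minimal curl-free field on two corner strips'' does not define an admissible object.

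The paper sidesteps all of this by building an explicit piecewise-affine potential $u$ on the fundamental strip $\mathbb{R}\times(0,d)$ which equals $(1-2\theta)y+x$ for $x\le y$, equals $y+x$ for $x\ge d$, and interpolates linearly in between; one then sets $\tilde\beta=\nabla u$, periodises in $y$ (which introduces the vortices as jumps of the potential), shifts in $x$, and mollifies on scale $\eps$. The boundary value $\beta_2(0,\cdot)=1-2\theta$ holds \emph{exactly by construction} because after the shift the mollification sees only the region where $\tilde\beta\equiv(1,1-2\theta)$. No corrector is needed, and the energy bounds follow from direct pointwise estimates $|\nabla u-(1,1)|\lesssim\sigma/|(x,y)-(d,d)|$ and $|D\nabla u|\lesssim\sigma/|(x,y)-(d,d)|^2$ on the transition triangle, which is more elementary than controlling a Biot--Savart lattice sum plus a harmonic boundary corrector.
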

\begin{proof}
    The upper bound
    \[
    \inf_{\beta_2(0,\cdot) = 1-2\theta} E^{(\epsilon)}_{\sigma, \eps}(\beta) \leq C \min\left\{ \theta^2, \sigma \left( \frac{|\log \sigma|}{|\log \theta|} + 1 \right)\right\}
    \]
    is shown in \cite[Theorem 1 and Remark 2]{GinZwi:22}. 
    Indeed, the first term is realized by the competitor $\beta(x,y) = e_1 + (1-2\theta) e_2$, whereas the second term is realized by a self-similar branching construction using the four preferred values in $K$ in a simplified convex integration setting, see Figure \ref{fig: branching} for an illustration. 

    \begin{figure}
        \centering
        \includegraphics[width=0.32\linewidth]{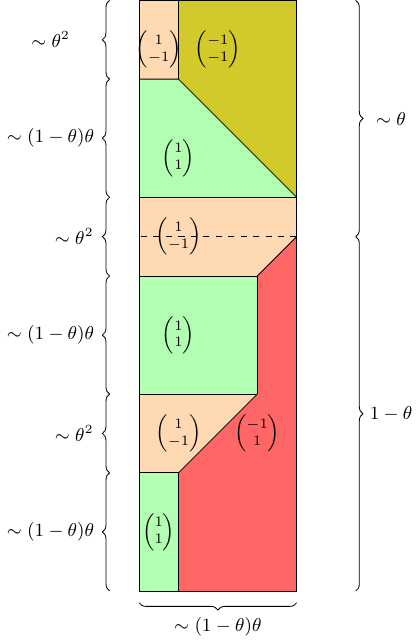} \qquad \qquad 
                \includegraphics[width=0.45\linewidth]{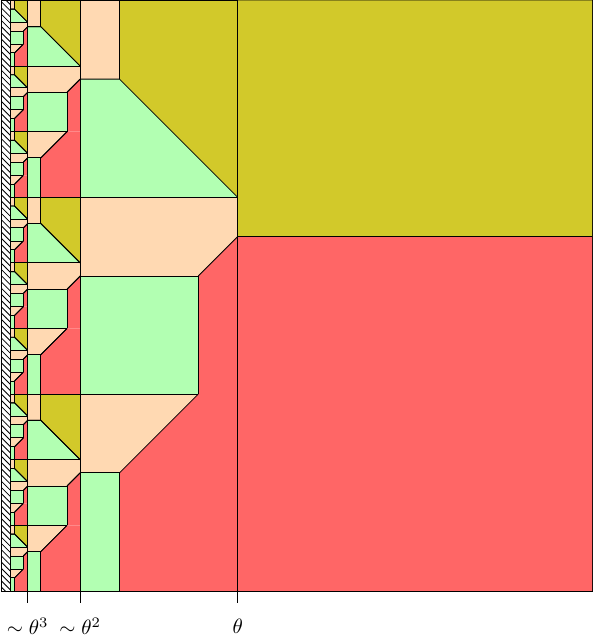}
        \caption{Sketch of the branching construction from \cite{GinZwi:22}. The different colors code the regions in which the values of the constructed gradient field $\nabla u$ are constant and in the set $K$. In the striped region the constructed function is interpolated towards the boundary so that $u$ satisfies $u(0,y) = (1-2\theta)y$.  Left: sketch of the building block; one oscillation of $\partial_y u$ is refined into $\sim \theta^{-1}$ oscillations of $\partial_y u$. Right: branching construction using the isotropically rescaled building blocks so that $\partial_y u$ oscillates more and more towards the boundary. Close to the boundary it holds in a weak sense $\partial_y u \approx 1-2\theta$.}
        \label{fig: branching}
    \end{figure}
    
    Hence, it remains to construct a competitor $\beta \in \mathcal{A}^{(\epsilon)}_{\sigma,\theta,\eps}$ such that $E^{(\epsilon)}_{\sigma,\theta,\eps}(\beta) \leq C \left( \theta \frac{\sigma^3}{\eps^2} + \theta \sigma \log\left( \frac{\sigma}{\theta \eps} \right) \right).$
    Note that when $\theta \leq \sigma$ it holds since $\eps < \sigma$ that $\theta^2 \leq \theta \sigma \leq \theta \frac{\sigma^3}{\eps^2}$ . 
    Therefore, it suffices to consider the case $\theta > \sigma > 0$.

    We start by defining a function $u: \R \times \left(0,\frac{\sigma}{2\theta}\right) \to \R$ as follows (see Figure \ref{fig: upper bound})
    \[
    u(x,y) = \begin{cases} 
    (1-2\theta)y + x &\text{ if } y \geq x, \\
    (1-2\theta) y + 2\theta \frac{x-y}{\frac{\sigma}{2\theta} - y} y + x &\text{ if } y<x< \frac{\sigma}{2\theta}, \\
     y + x &\text{ if } x \geq \frac{\sigma}{2\theta}.
    \end{cases}
    \]
\begin{figure}

\begin{tikzpicture}[scale =1.5]
    \fill[pattern= horizontal lines] (0,0) -- (4,4) -- (4,0) -- (0,0);
    \fill[green!30!white] (4,0) -- (4,4) -- (5,4) -- (5,0) -- (4,0);
    \fill[blue!30!white] (0,0) -- (0,4) -- (4,4) -- (0,0);
    
     \draw[thick] (0,0) -- (4,4) -- (4,0) -- (0,0);
     \draw[thick] (0,0) -- (5,0) -- (5,4) -- (0,4) -- (0,0);

    \draw [decorate,decoration={brace,amplitude=8pt,mirror}]
  (5.2,0) -- (5.2,4) node[midway, anchor=west, xshift = 0.3cm]{$\frac{\sigma}{2\theta}$};

   \draw [decorate,decoration={brace,amplitude=8pt,mirror}]
  (0,-0.2) -- (4,-0.2) node[midway, anchor=north, yshift = -0.3cm]{$\frac{\sigma}{2\theta}$};

     \draw node at (1,2.5) {$\begin{pmatrix} 1 \\ 1-2\theta \end{pmatrix}$};
    \draw node at (4.5,2) {$\begin{pmatrix} 1 \\ 1 \end{pmatrix}$};
    \end{tikzpicture}
    \qquad
    \begin{tikzpicture}[scale=6]
    \draw[white] (0,-0.19) -- (0.01,-0.19);
        \fill[green!30!white] (0.1,0) -- (0.1,1) -- (1,1) -- (1,0) -- (0.1,0);
        \foreach \i in {0,...,9}{
        \fill[pattern= horizontal lines] (0,\i*0.1) -- (0.1,\i*0.1 + 0.1) -- (0.1, \i * 0.1);
        \fill[blue!30!white] (0,\i*0.1) -- (0,\i*0.1 + 0.1) -- (0.1, \i * 0.1 + 0.1);
        \draw (0,\i*0.1) -- (0,\i*0.1 + 0.1);
        \draw (0,\i*0.1 + 0.1) -- (0.1,\i*0.1 + 0.1);
        }
        \foreach \i in {0,...,8}{
                \fill[blue] (0.1, \i*0.1 + 0.1) circle(0.4pt);
        }
        \draw (0,0) -- (1,0) -- (1,1) -- (0,1) -- (0,0);
        \draw (0.1,0) -- (0.1,1);
         \draw [decorate,decoration={brace,amplitude=6pt,mirror}]
  (0,0) -- (0.1,0) node[midway, anchor=north, yshift = -0.3cm]{$\frac{\sigma}{2\theta}$};
    \end{tikzpicture}
\caption{Sketch of the construction in the upper bound. Left: Sketch of $\nabla u$ which acts as a building block for $\tilde{\beta}$. The striped area is where $\nabla u$ is nonconstant and behaves roughly as $\frac{\sigma}{|(x,y) - (\frac{\sigma}{2\theta}, \frac{\sigma}{2\theta})}$. Right: Sketch of the function $\tilde{\beta}$. In the green and blue region $\tilde{\beta}$ is constantly $(1,1)^T$ and $(1,1-2\theta)^T$, respectively. The blue dots indicate the support of the measure $\operatorname{curl } \tilde{\beta} = \sigma \sum_{k=1}^{\lfloor \frac{\sigma}{2\theta} \rfloor}\delta_{(\frac{\sigma}{2\theta}, k \frac{\sigma}{2\theta})^T}$. }
\label{fig: upper bound}
\end{figure}
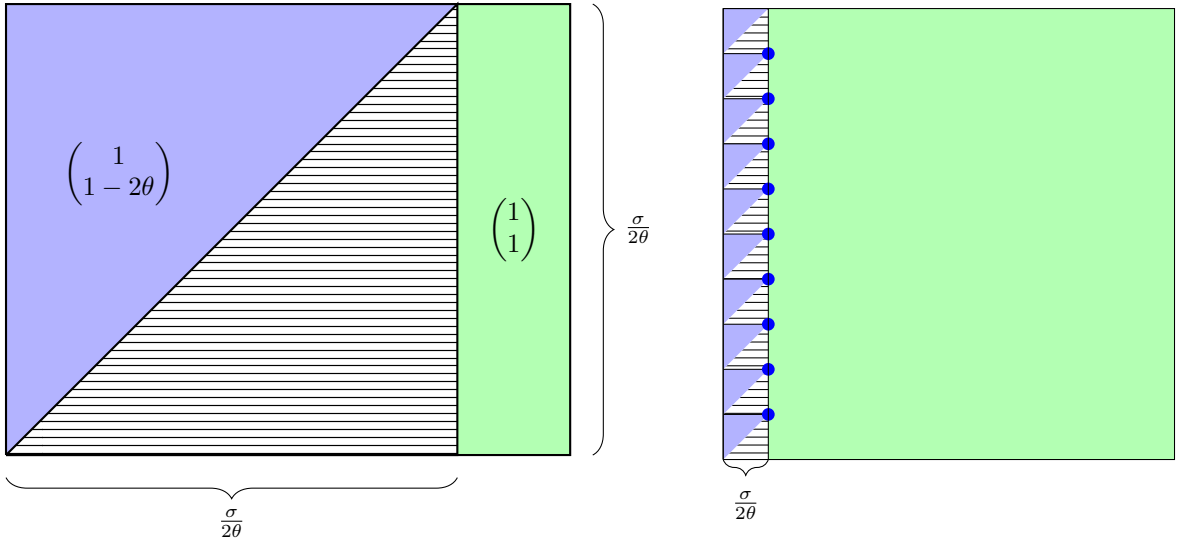    
    Then $u \in W^{1,1}((-1,2) \times (0,\frac{\sigma}{2\theta}))$ and
    \begin{equation}\label{eq: nabla u}
    \nabla u(x,y) = \begin{cases}
        \begin{pmatrix} 1 \\ 1-2\theta \end{pmatrix} &\text{ if } y \geq x, \\
    \begin{pmatrix} 1 + 2\theta \frac{ y}{\frac{\sigma}{2\theta} - y} \\ 1-2\theta + 2\theta \frac{x-2y}{\frac{\sigma}{2\theta} - y} + 2 \theta \frac{x-y}{\left(\frac{\sigma}{2\theta} - y\right)^2}y   \end{pmatrix}  &\text{ if } y<x<\frac{\sigma}{2\theta}, \\
    \begin{pmatrix} 1 \\ 1 \end{pmatrix} &\text{ if } x \geq \frac{\sigma}{2\theta}.
    \end{cases}
    \end{equation}
    Next, note that for $0 < y < x < \frac{\sigma}{2\theta}$ it holds that $\frac{\sigma}{2\theta} - x < \frac{\sigma}{2\theta} - y$, $x-y < \frac{\sigma}{2\theta} - y$ and $2\theta \leq \frac{\sigma}{\frac{\sigma}{2\theta} - y}$. 
    Consequently, it follows for $0 < y < x < \frac{\sigma}{2\theta}$ that
    \begin{align}
    \left|\nabla u(x,y) - \begin{pmatrix}
        1 \\ 1
    \end{pmatrix} \right| & \leq 2\theta +  2\theta \frac{x + 3y}{\frac{\sigma}{2\theta}-y} + 2\theta \frac{x-y}{\left(\frac{\sigma}{2\theta}-y\right)^2}y \\
    &\leq 6 \frac{\sigma}{\frac{\sigma}{2\theta} - y} \leq 12 \frac{\sigma}{\left(\frac{\sigma}{2\theta} - y\right) + \left(\frac{\sigma}{2\theta} - x\right)} \leq 12 \frac{\sigma}{|\left(\frac{\sigma}{2\theta},\frac{\sigma}{2\theta}\right)^T - (x,y)^T|}. \label{eq: diff nabla u}
    \end{align}
    Similarly, one shows that $\nabla u \in BV(\Omega \setminus B_{r}((\frac{\sigma}{2\theta},\frac{\sigma}{2\theta}));\R^2)$ for all $r>0$ and 
    \begin{equation}\label{eq: est Dnabla u}
        |D\nabla u| \leq \frac{C\sigma}{|(x,y)^T - (\frac{\sigma}{2\theta},\frac{\sigma}{2\theta})^T|^2} \chi_{\{x \leq \frac{\sigma}{2\theta}\}} \mathcal{L}^2 + \frac{C\sigma}{|y - \frac{\sigma}{2\theta}|} \mathcal{H}^1_{|\{\frac{\sigma}{2\theta}\} \times (0,\frac{\sigma}{2\theta}) \cup \{x = y\}}.
    \end{equation}
    Let us now define $v: \R^2 \to \R$ and $\tilde{\beta} : \R^2 \to \R^2$ as (cf.~Figure \ref{fig:constr v})
\begin{figure}[t]
\begin{tikzpicture}[scale = 5]
    \draw[->] (0,0) -- (1.1,0);
    \draw[yellow!70!black,thick] (0,0) -- (1,1/2);
    \foreach \i in {0,1,2,3,4}{
    \draw[red!50!white] (\i*0.2,\i*0.2*1/2) -- (\i*0.2+0.2,\i*0.2*1/2+0.2); 
    }
    \draw (1,1/2) node[anchor=north, yshift=-0.3cm, color=yellow!70!black] {$(1-2\theta)y$};
    \draw (0.2,0.2) node[anchor=south,yshift=0.2cm, color=red!50!white] {$v(\frac{\sigma}{2\theta},\cdot)$};
    \draw [decorate,decoration={brace,amplitude=6pt,mirror}]
  (0,0) -- (0.2,0) node[midway, anchor=north, yshift = -0.3cm]{$\frac{\sigma}{2\theta}$};
    \draw [decorate,decoration={brace,amplitude=6pt,mirror}]
  (1,1/2) -- (1,1/2+0.1) node[midway, anchor=north, xshift = 0.4cm, yshift = 0.2cm]{$\sigma$};
\end{tikzpicture}
    \caption{Sketch of $v(\frac{\sigma}{2\theta},\cdot)$ which only uses slope $1$ and approximates a function with slope $(1-2\theta)$ through jumps of distance $\frac{\sigma}{2\theta}$ with jump height $\sigma$. In the absolutely continuous part of $Dv$, denoted by $\tilde{\beta}$ in the proof of the upper bound, this induces the condition $\operatorname{curl } \tilde{\beta} = \sum_{k \in \mathbb{Z}} \sigma \, \delta_{(\frac{\sigma}{2\theta},k\frac{\sigma}{2\theta})}$.  }
    \label{fig:constr v}
\end{figure}
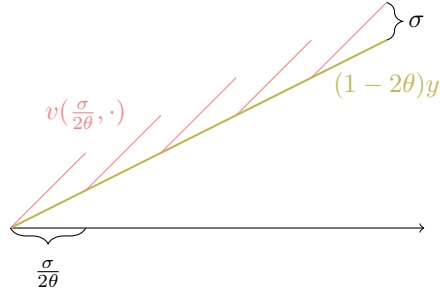
    
    \[
    v(x,y) = u(x, y - \lfloor \frac{2\theta y}{\sigma} \rfloor \frac{\sigma}{2\theta}) + (1-2\theta) \lfloor \frac{2\theta y}{\sigma} \rfloor \frac{\sigma}{2\theta} \text{ and } \tilde{\beta}(x,y) = \nabla u(x,y - \lfloor \frac{2\theta y}{\sigma} \rfloor \frac{\sigma}{2\theta}).
    \] 
    Then $v \in SBV_{loc}(\R^2)$ and $\tilde{\beta} \in L^1_{loc}(\R^2)$ with 
    \[
    J_v = \bigcup_{k \in \mathbb{Z}} (\frac{\sigma}{2\theta},\infty) \times \{ k \frac{\sigma}{2\theta} \}, \, (D v)_{| 
    J_v} = - \sigma e_2 \mathcal{H}^1_{| J_v} \text{ and } (D v)_{| \R^2 \setminus J_v} = \tilde{\beta} \, \mathcal{L}^2.
    \]
    In particular, it holds 
    \[
    \tilde{\beta}_2 = 1-2\theta \text{ in } (-\infty,0) \times \R, \text{ and } \operatorname{curl} \tilde{\beta} = \sum_{k \in \mathbb{Z}} \sigma \, \delta_{(\frac{\sigma}{2\theta},k\frac{\sigma}{2\theta})}.
    \]
    Moreover, it follows from the form of $\nabla u$, \eqref{eq: nabla u}, and \eqref{eq: diff nabla u}
    \begin{align}
    \left|\tilde{\beta}(x,y) - \begin{pmatrix} 1 \\ 1 \end{pmatrix}\right| &\leq C \left( \frac{\sigma}{\operatorname{dist}((x,y),\bigcup_{k \in \mathbb{Z}} \{ (\frac{\sigma}{2\theta},k\frac{\sigma}{2\theta}) \}} + 2\theta \right) \chi_{\{x \leq \frac{\sigma}{2\theta}\}} \\
    &\leq C \frac{\sigma}{\operatorname{dist}((x,y),\bigcup_{k \in \mathbb{Z}} \{ (\frac{\sigma}{2\theta},k\frac{\sigma}{2\theta}) \}}  \chi_{\{x \leq \frac{\sigma}{2\theta}\}}. \label{eq: est beta}  
    \end{align}
    Then we define $\beta: \R^2 \to \R^2$ as $\beta(x,y) = (\tilde{\beta}*\rho_{\eps})\left(x-\frac{\sigma}{2\theta},y\right)$.
    It follows directly that
    \begin{equation}\label{eq: competitor admissible}
    \beta_2(0,\cdot) = 1-2\theta \text{ and } \operatorname{curl } \beta = \sum_{k \in \mathbb{Z}} \sigma \, \delta_{(\frac{\sigma}{\theta},k\frac{\sigma}{2\theta})}*\rho_{\eps}.
    \end{equation}
    
    In addition, one can show using the bound \eqref{eq: est beta} and the properties of convolutions that
    \[
    \operatorname{dist}\left(\beta(x,y), K \right) \leq C  \min\left\{ \frac{\sigma}{\eps}, \frac{\sigma}{\operatorname{dist}((x,y),\bigcup_{k \in \mathbb{Z}} \{ (\frac{\sigma}{\theta},k\frac{\sigma}{2\theta})\}} \right\}  \chi_{\{ x \leq 3 \frac{\sigma}{2\theta}\}}. 
    \]
    Then we estimate using Lemma \ref{lemma: est W}
    \begin{align}
    \int_{\Omega} W(\beta) \, d\mathcal{L}^2 
    & \leq C \frac{\theta}{\sigma} \left( \int_{B_\eps(0)} \frac{\sigma^4}{\eps^4} + \frac{\sigma^2}{\eps^2} \, d \mathcal{L}^2 + \int_{B_{\sigma}(0) \setminus B_{\eps}(0)} \frac{\sigma^4}{|z|^4} \, dz + \int_{B_{\frac{3\sigma}{2\theta}(0)}\setminus B_{\sigma}(0)} \frac{\sigma^2}{|z|^2} \right) \\
    &\leq C \frac{\theta}{\sigma} \left( \frac{\sigma^4}{\eps^2} + \sigma^2 + \sigma^4 \int_{\eps}^{\infty} r^{-3} \, dr   + \sigma^2 \int_{\sigma}^{\frac{3\sigma}{2\theta}} r^{-1} \, dr\right) \\
    &\leq C\left( \theta \frac{\sigma^{3}}{\eps^2} + \theta \sigma \left( 1 + \log \frac{3}{2\theta}  \right) \right) \\
    &\leq C\left( \theta \frac{\sigma^3}{\eps^2} + \theta \sigma |\log \theta| \right), \label{eq: est int W}
    \end{align}
    where we used for the last estimate that $0 < \theta \leq 1/2$.
    
It remains to estimate $|D\beta|(\Omega)$ and $\int_{\Omega} |D\beta|^2 \, d\mathcal{L}^2$.
First, we obtain from \eqref{eq: est Dnabla u} that $\tilde{\beta} \in BV_{loc}(\R^2 \setminus \bigcup_{k \in \Z} \{ (\frac{\sigma}{2\theta}, k \frac{\sigma}{2\theta}) \};\R^2)$ with
\begin{align}
|D\tilde{\beta}| \leq & \frac{C\sigma}{\operatorname{dist}((x,y),\bigcup_{k \in \mathbb{Z}} \{ (\frac{\sigma}{2\theta},k\frac{\sigma}{2\theta}) \})^2}  \chi_{\{x \leq \frac{\sigma}{2\theta}\}} \mathcal{L}^2 + \frac{C \sigma}{\operatorname{dist}((x,y), \bigcup_{k \in \Z} \{(\frac{\sigma}{2\theta},k \frac{\sigma}{2\theta})\})} \mathcal{H}^1_{|L} \\& + 2\theta \mathcal{H}^1_{| \bigcup_{k\in \Z} (0,\frac{\sigma}{2\theta}) \times \{ k \frac{\sigma}{2\theta}\}},
\end{align}
where we set
\[
L = \left( \{\frac{\sigma}{2\theta}\} \times \R \right) \cup \bigcup_{k \in \Z} \left\{ (t, k \frac{\sigma}{2\theta} + t): t \in (0,\frac{\sigma}{2\theta}  \right\}.
\]
It follows for $(x,y) \in \R^2 \setminus \bigcup_{k \in \Z} B_{2 \eps}((\frac{\sigma}{2\theta}, k\frac{\sigma}{2\theta}))$ that
\begin{align}
    &|D \beta (x,y)| \\ = &|D\tilde{\beta} * \rho_{\eps}(x - \frac{\sigma}{2\theta},y)| \\ \leq &\frac{C\sigma}{\operatorname{dist}((x,y),\bigcup_{k \in \mathbb{Z}} \{ (\frac{\sigma}{2\theta},k\frac{\sigma}{2\theta}) \})^2}  \chi_{\{x \leq 3\frac{\sigma}{\theta}\}} + \frac{C\sigma}{\eps \operatorname{dist}((x,y), \bigcup_{k \in \Z} \{(\frac{\sigma}{\theta}, k \frac{\sigma}{2\theta})\})} \chi_{B_{\eps}(L + \frac{\sigma}{2\theta} e_1) } \\ & \quad  + \frac{C \theta}{\eps} \chi_{\bigcup_{k\in \Z} (0,\frac{\sigma}{\theta}) \times (k \frac{\sigma}{2\theta}-\eps, k \frac{\sigma}{2\theta}+\eps)}
\end{align}
Hence, we may estimate
\begin{align}
    &\int_{\Omega \setminus  \bigcup_{k \in \Z} B_{2 \eps}((\frac{\sigma}{2\theta}, k\frac{\sigma}{2\theta}))} |D\beta| \, dx \\
    \leq & C \frac{\theta}{\sigma} \left( \int_{B_{3\frac{\sigma}{\theta}}(0) \setminus B_{\eps}(0)} \frac{\sigma}{|(x,y)|^2} \, d\mathcal{L}^2 + \int_{B_{\eps}(L) \setminus B_{2 \eps}((\frac{\sigma}{2\theta}, k\frac{\sigma}{2\theta}))}  \frac{\sigma}{\eps \operatorname{dist}((x,y), \bigcup_{k \in \Z} \{(\frac{\sigma}{\theta}, k \frac{\sigma}{2\theta})\})} \, d\mathcal{L}^2 \right. \\ & \left. \qquad + \int_{0}^{\sigma/\theta} \int_{-\eps}^{\eps} \frac{\theta}{\eps} \, dy dx \right) \\
    = &C \frac{\theta}{\sigma} \left( \sigma \log\left( \frac{3 \sigma}{\theta \eps} \right) + \sigma  \right) \\
    \leq &C \left( \theta |\log(\theta)| + \theta \frac{\sigma^2}{\eps^2} \right), \label{eq: est L1 outside}
\end{align}
where we used that $\log\left( \frac{3 \sigma}{\theta \eps} \right) = |\log(\theta)| + \log\left(3\frac{\sigma}{\eps}\right) \leq |\log(\theta)| + 9\frac{\sigma^2}{\eps^2}$.
Similarly, we estimate
\begin{align}
    &\int_{\Omega \setminus  \bigcup_{k \in \Z} B_{2 \eps}((\frac{\sigma}{2\theta}, k\frac{\sigma}{2\theta}))} |D\beta|^2 \, d\mathcal{L}^2 \\
    \leq & C \frac{\theta}{\sigma} \left( \int_{B_{3\frac{\sigma}{\theta}}(0) \setminus B_{\eps}(0)} \frac{\sigma^2}{|(x,y)|^4} \, d\mathcal{L}^2 + \int_{B_{\eps}(L) \setminus B_{2 \eps}((\frac{\sigma}{2\theta}, k\frac{\sigma}{2\theta}))}  \frac{\sigma^2}{\eps^2 \operatorname{dist}((x,y), \bigcup_{k \in \Z} \{(\frac{\sigma}{\theta}, k \frac{\sigma}{2\theta})\})^2} \, d\mathcal{L}^2 \right. \\ & \qquad \left.+ \int_{0}^{\sigma/\theta} \int_{-\eps}^{\eps} \frac{\theta^2}{\eps^2} \, dy dx \right) \\
    = & C \frac{\theta}{\sigma} \left( \frac{\sigma^2}{\eps^2}  + \frac{\sigma \theta}{\eps}  \right) 
    \leq C  \theta \frac{\sigma}{\eps^2}, \label{eq: est L2 outside}
\end{align}
where we used that $0 < \eps < \sigma$ implies that $\frac{\theta^2}{\eps} \leq \theta \frac{\sigma}{\eps^2}$.
Next, we estimate the contributions on $\bigcup_{k \in \Z} B_{2 \eps}((\frac{\sigma}{\theta}, k\frac{\sigma}{2\theta}))$.
Using Young's inequality for convolutions we find using \eqref{eq: est beta}
\begin{align}
    \int_{\Omega \cap \bigcup_{k \in \Z} B_{2 \eps}((\frac{\sigma}{\theta}, k\frac{\sigma}{2\theta}))} |D\beta| \, d\mathcal{L}^2  \leq C \frac{\theta}{\sigma} \int_{B_{2\eps}((\frac{\sigma}{\theta}, \frac{\sigma}{2\theta})} |D\beta| \, d\mathcal{L}^2
    \leq &C \frac{\theta}{\sigma}\| \tilde{\beta} \|_{L^1(B_{3\eps}((\frac{\sigma}{2\theta}, \frac{\sigma}{2\theta})))} \| D \rho_{\eps} \|_{L^1(B_{\eps}(0))} \\ 
    \leq & C \frac{\theta}{\sigma} \left(\int_{0}^{3\eps} r \frac{\sigma}{r} \, dr\right) \frac1{\eps} \leq C \theta. \label{eq: est L1 inside}
\end{align}
Similarly, we obtain
\begin{align}
    \int_{\Omega \cap \bigcup_{k \in \Z} B_{2 \eps}((\frac{\sigma}{\theta}, k\frac{\sigma}{2\theta}))} |D\beta|^2 \, d\mathcal{L}^2 \leq C \frac{\theta}{\sigma} \int_{B_{2\eps}((\frac{\sigma}{\theta}, \frac{\sigma}{2\theta})} |D\beta|^2 \, d\mathcal{L}^2 
    \leq &C \frac{\theta}{\sigma}\| \tilde{\beta} \|_{L^1(B_{3\eps}((\frac{\sigma}{2\theta}, \frac{\sigma}{2\theta})))}^2 \| D \rho_{\eps} \|_{L^2(B_{\eps}(0))}^2 \\ 
    \leq & C \frac{\theta}{\sigma} \sigma \eps \frac1{\eps^2} = C  \theta \frac{\sigma}{\eps^2}. \label{eq: est L2 inside}
\end{align}
In particular, it follows together with \eqref{eq: competitor admissible} that $\beta \in \mathcal{A}_{\sigma,\theta,\eps}^{(\epsilon)}$ for $\epsilon \in \{1,2,a\}$.

Combining \eqref{eq: est int W}, \eqref{eq: est L1 outside} and \eqref{eq: est L1 inside} yields
\[
E^{(1)}_{\sigma,\theta,\eps}(\beta) = \int_{\Omega} W(\beta) \, d\mathcal{L}^2 + \sigma |D\beta|(\Omega) \leq C \left( \theta \frac{\sigma^3}{\eps^2} + \theta \sigma |\log \theta| \right),
\]
whereas \eqref{eq: est int W}, \eqref{eq: est L2 outside} and \eqref{eq: est L2 inside} imply
\[
E^{(a)}_{\sigma,\theta,\eps}(\beta) \leq E^{(2)}_{\sigma,\theta,\eps}(\beta) = \int_{\Omega} W(\beta) + \sigma^2 |D\beta|^2 \, d\mathcal{L}^2 \leq C \left( \theta \frac{\sigma^3}{\eps^2} + \theta \sigma |\log \theta| \right).
\]

\end{proof}

\section{Lower bound}\label{sec: lower}
In this section we prove the lower bound of Theorem \ref{thm: main}. 

\subsection{Energy estimates for vortices and the ball construction}

We start with a lemma that will provide a lower bound on the energy concentrated within the $\eps$-ball around vortices.

\begin{lemma}\label{lem: lb vortex general}
    There exists $c_v>0$ such that for all $\sigma \geq \sqrt{2} \pi \eps >0$ and $\beta: B_{\eps}(0) \to \R^2$ such that $\operatorname{curl} \beta = \sigma \delta_0 * \rho_{\eps}$ it holds
    \[
    \int_{B_{\eps}(0)} W(\beta) \, d\mathcal{L}^2 \geq c_v \frac{\sigma^4}{\eps^2}.
    \]
\end{lemma}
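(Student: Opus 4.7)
My plan is to use the circulation condition $\operatorname{curl}\beta=\sigma\delta_0\ast\rho_{\eps}$ to force a quantitative $L^4$-lower bound on $\beta$ inside $B_\eps(0)$, and then to convert this into the desired lower bound on $\int W(\beta)$ via the pointwise inequality $W(\beta)\geq\tfrac12(|\beta|^2-2)^2$.

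For the first step, I would note that $\rho_\eps$ is supported in $B_{\eps/4}(0)$, so Stokes' theorem gives $\int_{\partial B_r(0)}\beta\cdot\tau\,d\mathcal{H}^1=\sigma$ for every $r\in[\eps/4,\eps]$. Hölder's inequality on the circle then yields $\int_{\partial B_r}|\beta|^4\,d\mathcal{H}^1\geq\sigma^4/(2\pi r)^3$, and integrating in $r$ via the coarea formula produces
\[
\int_{B_\eps(0)}|\beta|^4\,d\mathcal{L}^2\;\geq\;\int_{\eps/4}^{\eps}\frac{\sigma^4}{8\pi^3 r^3}\,dr\;=\;\frac{15\,\sigma^4}{16\pi^3\eps^2}.
\]

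For the second step, expanding $W$ and using $a^2+b^2\geq\tfrac12(a+b)^2$ gives the pointwise inequality
\[
W(\beta)\;\geq\;\tfrac12\bigl(2-|\beta|^2\bigr)^2\;=\;\tfrac12|\beta|^4-2|\beta|^2+2,
\]
so absorbing the quadratic middle term by Young's inequality (or via Cauchy--Schwarz relating $\|\beta\|_{L^2(B_\eps)}$ and $\|\beta\|_{L^4(B_\eps)}$) yields an estimate of the form $\int_{B_\eps}W(\beta)\,d\mathcal{L}^2\geq c\int_{B_\eps}|\beta|^4\,d\mathcal{L}^2-C\eps^2$. Combining with the first step produces $\int_{B_\eps}W(\beta)\,d\mathcal{L}^2\geq c'\sigma^4/\eps^2-C\eps^2$, and the hypothesis $\sigma>\sqrt{2}\pi\eps$ gives $\sigma^4/\eps^2\geq 4\pi^4\eps^2$, so that the remainder $C\eps^2$ is absorbable into a suitable fraction of the main term.

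I expect the only delicate point to be choosing the conversion inequalities tightly enough so that the absorption actually goes through at the threshold $\sigma>\sqrt{2}\pi\eps$; a crude Young step gives an unnecessarily large $C$, but a sharper inequality (for instance using $W(\beta)\geq\tfrac12\operatorname{dist}(\beta,K)^4$ from Lemma \ref{lemma: est W} together with $\operatorname{dist}(\beta,K)\geq(|\beta|-\sqrt{2})_+$, and carefully tracking the constants emerging from Hölder on $\partial B_r$) should suffice. Once this bookkeeping is arranged, the constant $c_v>0$ can be read off explicitly.
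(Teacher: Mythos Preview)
Your proposal is correct and follows essentially the same approach as the paper's proof: both use Stokes plus H\"older on circles $\partial B_r$ for $r\in[\eps/4,\eps]$ to obtain $\int_{B_\eps}|\beta|^4\ge\frac{15\sigma^4}{16\pi^3\eps^2}$, then convert via the pointwise inequality $W(\beta)\ge\frac12|\beta|^4-2|\beta|^2+2$ and a Young step to $W(\beta)\ge\frac14|\beta|^4-2$, and finally absorb the $-C\eps^2$ remainder using $\sigma\ge\sqrt{2}\pi\eps$. The paper carries out exactly the absorption you flag as delicate, arriving at $4\int W(\beta)\ge\frac{15\sigma^4}{16\pi^3\eps^2}-8\pi\eps^2\ge\frac{5\sigma^4}{16\pi^3\eps^2}$; your suggested alternative via $\operatorname{dist}(\beta,K)^4$ is not needed.
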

\begin{proof}
        First, note using Young's inequality that 
\begin{align*}
W(\beta) &= (1-\beta_1^2)^2 + (1-\beta_2^2)^2 = \beta_1^4 + \beta_2^4 -2 (\beta_1^2 + \beta_2^2) + 2\geq \frac12 |\beta|^4 - 2 |\beta|^2 + 2 \geq \frac14 |\beta|^4 -2.
\end{align*}
Now, estimate
\begin{align}
    4\int_{B_{\eps}(0)}  W(\beta) \, d\mathcal{L}^2 + 8 \pi \eps^2 &\geq \int_{B_{\eps}(0)} 4 W(\beta) \, d\mathcal{L}^2 + 8 \pi \eps^2  \\ &\geq \int_{B_{\eps}(0)} |\beta|^4 \, d\mathcal{L}^2 \\& \geq \int_0^{\eps} \int_{\partial B_r(0)} |\beta \cdot \tau|^4 \, d\mathcal{H}^1 \, dr \\ & \geq \int_{\eps / 2}^{\eps} \frac1{(2\pi r)^3} \left| \int_{\partial B_r(0)} \beta \cdot \tau \, d\mathcal{H}^1 \right|^4 \, dr \\
    &= \frac1{8 \pi^3} \int_{\eps / 4}^{\eps} r^{-3} \left| \int_{B_r(0)}  \sigma \rho_{\eps}(x) \, d\mathcal{L}^2 \right|^4 \, dr \\
    &=  \frac{15\sigma^4}{16 \pi^3 \eps^2} \geq 15 \pi \eps^2 \geq \frac32 (8\pi \eps^2).
\end{align}
Consequently, it holds  
\[
4\int_{B_{\eps}(0)} W(\beta) \, d\mathcal{L}^2 \geq \frac{15\sigma^4}{16\pi^3 \eps^2} - 8 \pi \eps^2 \geq  \frac{5\sigma^4}{16 \pi^3 \eps^2}.
\]
\end{proof}

Next, we prove a one-dimensional lemma that will guarantee lower bounds for the energy on slices either from $W(\beta)$ or from $D\beta$. 

\begin{lemma}\label{lemma: alternative}
    For all $I \subseteq \R$  open the following holds:  
    \begin{enumerate}
        \item For all $v \in BV(I)$ one of the following is true
        \begin{enumerate}
            \item $|Dv|(I) \geq \frac12$;
            \item there exists $\xi \in \{\pm 1\}$ such that it holds for all $y\in I$ that $|v(y) - \xi| \leq 3 \min_{\xi' = \pm 1} |v(y) - \xi'|$.
        \end{enumerate}
        \item Let $\Phi: \R \to \R$ be the function from \eqref{eq: def phi}. For all $\sigma > 0$ and $v \in W^{1,2}(I)$ one of the following is true
        \begin{enumerate}
            \item $\int_I (1 - v(y)^2)^2 + \sigma^2 |v'(y)|^2 \, dy \geq 2 \sigma \int_I |(\Phi\circ v)'(y)| \geq \frac{\sigma}{16}$;
            \item there exists $\xi \in \{\pm 1\}$ such that it holds for all $y\in I$ that $|v(y) - \xi| \leq 3 \min_{\xi' = \pm 1} |v(y) - \xi'|$.
        \end{enumerate}
    \end{enumerate}
\end{lemma}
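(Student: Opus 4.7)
In both parts the strategy is the same: if alternative (b) fails, then $v$ must take a value less than $-1/2$ at some $y_-\in I$ and a value greater than $1/2$ at some $y_+\in I$; conclusion (a) then follows either from the standard lower bound on $|Dv|(I)$ (part (1)) or from an AM--GM computation combined with the monotonicity of $\Phi$ (part (2)).

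\textbf{Reduction from the failure of (b).} I would first unpack the inequality in (b) by a case distinction on the sign of $v(y)$. If $v(y) \geq 0$, then $\min_{\xi' = \pm 1} |v(y) - \xi'| = |v(y) - 1|$, so the inequality in (b) with $\xi = 1$ is automatic, while the inequality with $\xi = -1$ reduces to $|v(y) + 1| \leq 3|v(y) - 1|$; a short algebraic check shows this fails precisely when $v(y) > 1/2$. Symmetrically, for $v(y) < 0$ the inequality with $\xi = +1$ fails iff $v(y) < -1/2$. Consequently, if (b) fails, then for each $\xi \in \{\pm 1\}$ there is a witness $y \in I$ violating the corresponding inequality, and the above analysis forces the existence of $y_+, y_- \in I$ with $v(y_-) < -1/2$ and $v(y_+) > 1/2$.

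\textbf{Deriving (a).} For part (1), I would pass to the precise representative of $v \in BV(I)$ so that pointwise evaluation is meaningful; the standard BV bound then yields
\[
|Dv|(I) \;\geq\; |v(y_+) - v(y_-)| \;>\; 1 \;\geq\; \tfrac12.
\]
For part (2), AM--GM applied pointwise produces
\[
(1 - v^2)^2 + \sigma^2 |v'|^2 \;\geq\; 2\sigma\, |1 - v^2|\, |v'| \;=\; 2\sigma\, |(\Phi \circ v)'|,
\]
using $\Phi'(t) = |1 - t^2|$ and the chain rule (valid since the continuous representative of $v \in W^{1,2}(I)$ is locally bounded and $\Phi$ is locally Lipschitz). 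Integrating over $I$ gives the first inequality of (a). Since $\Phi$ is nondecreasing and $v(y_-) < -\tfrac12 < \tfrac12 < v(y_+)$, a direct computation (using $\Phi(\tfrac12) = \tfrac12 - \tfrac1{24} = \tfrac{11}{24}$) gives
\[
\int_I |(\Phi\circ v)'|\,dy \;\geq\; \Phi(v(y_+)) - \Phi(v(y_-)) \;\geq\; 2\Phi(\tfrac12) \;=\; \tfrac{11}{12},
\]
so that $2\sigma \int_I |(\Phi\circ v)'|\,dy \geq \tfrac{11}{6}\sigma \geq \sigma/16$, which is the second inequality in (a).

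\textbf{Main obstacle.} None of the steps is genuinely hard; the only mild subtlety is choosing a good (e.g.~precise) representative of $v$ in part (1) so that the pointwise statement in (b) and the bound $|v(y_+) - v(y_-)| \leq |Dv|(I)$ both make unambiguous sense. The factor $3$ in (b) is tuned precisely so that the threshold in the case analysis comes out symmetric at $\pm 1/2$, and the constant $\sigma/16$ appearing on the right-hand side of (2a) is rather loose compared to the $\tfrac{11\sigma}{6}$ one actually obtains.
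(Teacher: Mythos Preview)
Your argument is correct. Both you and the paper argue by contrapositive, but in opposite directions: the paper shows $\lnot(\mathrm{a}) \Rightarrow (\mathrm{b})$ by fixing any $x \in I$, using the small total (resp.\ $\Phi$-weighted) variation to force $|v(y)-v(x)| \leq \tfrac12$ for all $y$, and then checking that the well $\xi$ nearest to $v(x)$ satisfies (b); you instead show $\lnot(\mathrm{b}) \Rightarrow (\mathrm{a})$ by extracting witnesses $y_\pm$ with $v(y_-) < -\tfrac12 < \tfrac12 < v(y_+)$. For part (2) your route is slightly more self-contained, since you only use the monotonicity of $\Phi$ and the explicit value $\Phi(\tfrac12)=\tfrac{11}{24}$, whereas the paper invokes the quadratic lower bound $\tfrac18|x-y|^2 \leq |\Phi(x)-\Phi(y)|$ from the preparatory Lemma~\ref{lem: properties phi}. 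One small imprecision in your write-up: for $v(y)\geq 0$ the inequality $|v(y)+1|\leq 3|v(y)-1|$ fails iff $\tfrac12 < v(y) < 2$, not ``precisely when $v(y)>\tfrac12$''; but only the implication you actually need (failure $\Rightarrow v(y)>\tfrac12$) is used, so this does not affect the proof.
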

\begin{proof}
    First, let $v \in BV(I)$. We assume that $|Dv|(I) \leq 1/2$. Next, let $x \in I$ and $\xi \in \{\pm 1\}$ such that $|v(x) - \xi| = \min_{\xi' = \pm 1} |v(x) - \xi'|$. Since $|Dv|(I) \leq 1/2$, we have for all $y  \in I$ that $|v(x) - v(y)| \leq 1/2$. This implies that $|v(y) - \xi| \leq 3 \min_{\xi' = \pm 1} |v(y) - \xi'|$. \\
    Next we prove (2). The first inequality in (a) follows directly by Young's inequality. Let $v \in W^{1,2}(I)$ and assume that $\int_I |(\Phi \circ v)'| \, d\mathcal{L}^1 \leq \frac{1}{32}$. Next, let $x \in I$ and $\xi \in \{\pm 1\}$ such that $|v(x) - \xi| = \min_{\xi' = \pm 1} |v(x) - \xi'|$. Then it follows by Lemma \ref{lem: properties phi} for all $y \in I$ that $\frac18 |v(x) - v(y)|^2 \leq |\Phi(v(x)) - \Phi(v(y))| \leq \frac1{32}$, i.e.  $|v(x) - v(y)| \leq \frac12$. The conclusion follows as in (1).
\end{proof}
\begin{remark}\label{rem: alternative}
Note that it holds for $\beta \in \R^2$ that 
\[
\min_{\xi \in K} |\beta_1 - \xi_1| + |\beta_2 - \xi_2| \leq \sqrt{2} \operatorname{dist}(\beta,K).
\]
    Hence, for $v: I \to \R^2$ it is straightforward to obtain the analogous statement to Lemma \ref{lemma: alternative} where one has to replace (b) by the statement: there exists $\xi \in K$ such that it holds for almost all $y \in I$ that $|v(y) - \xi| \leq 3\sqrt{2} \operatorname{dist}(v(y),K) \leq 3\sqrt{2} \sqrt{W(v(y))}$.
\end{remark}

Next, we present an estimate on annuli that will be the fundamental ingredient in a ball construction (see Proposition \ref{prop: ball constr} below) to prove a logarithmic lower bound for the energy which is distributed in balls of radius $\frac{\sigma}{\eps \theta}$ around vortices. The main difference to similar estimates in the context of dislocation models (see \cite{dLGaPo12,gin18,GiNeZw24}) is that there is no rigidity for the term involving $W(\beta)$. In principle, the four different values in $K$ could be exploited by $\beta$ to create a circulation  of the form $\int_{\partial B_r(0)} \beta \cdot \tau \, d\mathcal{H}^1= \sigma$ (at least for large enough values of $r$). However, this would come at the expense of creating energy in the term involving $D\beta$. Hence, the bound below involves both terms of the energy.  

\begin{proposition}\label{prop: est annulus}
    There exists $c>0$ such that the following is true. Let $0 <r < R$. Then it holds for all $\beta: B_R(0) \to \R^2$ with $\operatorname{curl } \beta \in \mathcal{M}(B_R(0))$, $(\operatorname{curl} \beta) (B_r(0)) \in \sigma \Z$ and $\operatorname{curl } \beta = 0$ in $A:=B_R(0) \setminus B_r(0)$ that  
    \begin{equation}\label{eq: est annulus1}
    \int_{A} W(\beta) \, dx + \sigma |D\beta|(A) \geq c \sigma |\operatorname{curl } \beta(B_r(0))| \log(R/r)
    \end{equation}
    and
    \begin{equation}\label{eq: est annulus quadratic}
    \int_{A} W(\beta) + 2 \sigma |D(\Phi \circ \beta_1)| + 2 \sigma |D(\Phi \circ \beta_2)| \, dx \geq c \sigma |\operatorname{curl } \beta(B_r(0))| \log(R/r).
    \end{equation}
\end{proposition}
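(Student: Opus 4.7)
Both estimates will be obtained by slicing the annulus $A$ into concentric circles $\partial B_\rho$, $\rho\in(r,R)$, establishing a per-slice lower bound of order $N\sigma^2/\rho$ (with $N\sigma := \operatorname{curl}\beta(B_r(0))\in\sigma\Z$, WLOG $N\neq 0$), and integrating to produce the logarithm. For a.e.~$\rho$, the trace $\beta_{|\partial B_\rho}$ inherits the ambient regularity, and the distributional Stokes identity together with $\operatorname{curl}\beta\equiv 0$ in $A$ yields
\[
\int_{\partial B_\rho}\beta\cdot\tau\,d\mathcal{H}^1 = (\operatorname{curl}\beta)(B_\rho) = N\sigma.
\]

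For \eqref{eq: est annulus1} the per-slice argument is direct Poincar\'e--Wirtinger on the torus $\partial B_\rho$ of length $2\pi\rho$: a BV function satisfies $\|\beta-\bar\beta\|_{L^\infty(\partial B_\rho)} \leq C|D\beta|(\partial B_\rho)$, so subtracting the mean (which is annihilated by integration against $\tau$) gives
\[
|N\sigma| = \Big|\int_{\partial B_\rho}(\beta-\bar\beta)\cdot\tau\,d\mathcal{H}^1\Big| \leq C\rho\,|D\beta|(\partial B_\rho).
\]
Slicing of BV functions then yields $|D\beta|(A) \geq c|N\sigma|\log(R/r)$, and \eqref{eq: est annulus1} follows after multiplying by $\sigma$ and discarding $\int_A W\geq 0$.

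For \eqref{eq: est annulus quadratic} this direct route is blocked, since $\Phi$ is not Lipschitz near $\pm 1$ and $|D(\Phi\circ\beta_i)|$ cannot control the circulation through $\beta$. I will combine three per-slice ingredients. (A) Lemma \ref{lemma: alternative}(2) applied to $\beta_1,\beta_2$ on $\partial B_\rho$ produces the dichotomy that either $|D(\Phi\circ\beta_i)|(\partial B_\rho)\geq 1/32$ for some $i$ (contributing $\sigma/16$ to the LHS), or $\beta$ is uniformly close to a single well $\xi\in K$ in the sense of Remark \ref{rem: alternative}. (B) In the ``close'' alternative, Lemma \ref{lemma: est W} gives $|\beta-\xi|\leq 3\sqrt{2}\sqrt{W(\beta)}$, so the circulation identity and Cauchy--Schwarz imply $\int_{\partial B_\rho}W \geq (N\sigma)^2/(36\pi\rho)$. (C) Unconditionally, $|N\sigma|\leq\int|\beta|\,d\mathcal{H}^1$ combined with the pointwise bound $|\beta|^4\leq 4W(\beta)+8$ from the proof of Lemma \ref{lem: lb vortex general} and H\"older's inequality with exponents $(4,4/3)$ yields $(N\sigma)^4 \leq (2\pi\rho)^3\bigl(4\textstyle\int_{\partial B_\rho}W + 16\pi\rho\bigr)$, which for $\rho\leq N\sigma/(4\pi)$ simplifies to $\int_{\partial B_\rho}W \geq (N\sigma)^4/(64\pi^3\rho^3)$.

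For $N\geq 1$, each of (A), (B), (C) exceeds $cN\sigma^2/\rho$ in its regime: (B) always, the (A)-contribution $\sigma/16$ once $\rho\geq 16cN\sigma$, and (C) once $\rho\leq N\sigma/(4\pi)$. Choosing $c=1/(64\pi)$ makes the thresholds of (A) and (C) abut at $\rho=N\sigma/(4\pi)$, so every slice is covered by at least one of the three; integrating the resulting per-slice inequality over $\rho\in(r,R)$ then produces \eqref{eq: est annulus quadratic}. The principal obstacle is the ``bad'' regime of case (A) at small $\rho$, where $|D(\Phi\circ\beta)|$ alone is too weak and one must genuinely invoke (C). This is precisely the non-rigidity of $W$ emphasized in the introduction: without a Poincar\'e-type rigidity from the bulk term one has to trade the quartic growth of $W$ at infinity against the circulation to close the estimate.
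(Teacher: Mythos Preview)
Your argument is correct. For \eqref{eq: est annulus quadratic} it follows the same template as the paper: split circles $\partial B_\rho$ according to whether $\rho$ is small or large compared to $|N|\sigma$; in the small-$\rho$ regime extract a lower bound from $\int_{\partial B_\rho}W$ alone via the growth of $W$ at infinity, and in the large-$\rho$ regime invoke the dichotomy of Lemma~\ref{lemma: alternative}(2)/Remark~\ref{rem: alternative}. The only cosmetic difference is that you use the quartic bound $|\beta|^4\le 4W(\beta)+8$ with H\"older $(4,4/3)$, while the paper uses the quadratic bound $|\beta|^2\le 2W(\beta)+4$ with Jensen; both give a per-slice contribution $\gtrsim N\sigma^2/\rho$ on the small-$\rho$ range, and your matching of the thresholds at $\rho=N\sigma/(4\pi)$ is clean.

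For \eqref{eq: est annulus1} your route is genuinely different from the paper's main proof. The paper treats \eqref{eq: est annulus1} in parallel with \eqref{eq: est annulus quadratic}, using both $\int W$ and $\sigma|D\beta|$ through the dichotomy of Lemma~\ref{lemma: alternative}(1). You instead discard $\int_A W$ entirely and obtain the bound from $\sigma|D\beta|(A)$ alone via the tangential Poincar\'e--Wirtinger inequality on each circle: since $\int_{\partial B_\rho}\bar\beta\cdot\tau=0$, the circulation forces $|N\sigma|\le C\rho\,|\partial_\tau\beta|(\partial B_\rho)$, and integrating in $\rho$ gives the logarithm. This is shorter and shows that for the $BV$-penalisation the bulk term $W$ is not needed in the annulus estimate. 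The paper in fact records exactly this observation in its Appendix (with a radial-derivative variant that yields the sharper constant $c=1$), and remarks that the analogous shortcut is \emph{not} available for \eqref{eq: est annulus quadratic} because $|D(\Phi\circ\beta_i)|$ degenerates near the wells---which is precisely why your (C) ingredient is indispensable there. So your two halves use complementary ideas: a pure regulariser estimate where it suffices, and the full interplay of $W$ with the Modica--Mortola surrogate where it does not.
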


\begin{proof}
    We first prove estimate \eqref{eq: est annulus1}. Let $t \in (r,R)$. 
    First, assume that $\frac{|\operatorname{curl } \beta(B_r(0))|^2}{t^2} \geq 512 $. 
    Then we estimate using $|\beta|^2 \leq 2\operatorname{dist}\left(\beta, K \right)^2 + 4 \leq 2 W(\beta) + 4$, Jensen's inequality and $\left( \operatorname{curl } \beta \right)(B_r(0))  \in \sigma \Z$
    \begin{align}
        \int_{\partial B_t(0)} W(\beta) \, d\mathcal{H}^1       &\geq \int_{\partial B_t(0)} (\frac12 |\beta|^2 - 2) \, d\mathcal{H}^1 \\
        &\geq \frac1{16t} \left( \int_{\partial B_t(0)} \beta\cdot \tau\, d\mathcal{H}^1 \right)^2 - 16t \\
        &= \frac1{16t} |\operatorname{curl} \beta (B_r(0))|^2  - 16t \\
        &\geq \frac1{32t} |\operatorname{curl} \beta (B_r(0))|^2 \geq \frac1{32t} \sigma |\operatorname{curl} \beta (B_r(0))|. \label{eq: curl large1}
    \end{align}
Next, assume that $\frac{|\operatorname{curl } \beta(B_r(0))|^2 }{t^2} \leq 512$ which implies that $\frac{|\operatorname{curl } \beta(B_r(0))| }{t} \leq 25$.   
    Next, note that by Remark \ref{rem: alternative} one of the following is true on $\partial B_t(0)$:
    \begin{enumerate}
        \item $|\partial_{\tau} \beta|(\partial B_t(0))| \geq \frac12$, where $\partial_{\tau} \beta$ denotes the tangential derivative of $\beta$ on $\partial B_t(0)$;
        \item there exists $\xi \in K$ such that $|\beta(y) - \xi|^2 \leq 18  W(\beta(y)) $ for all $y \in \partial B_t(0)$.
    \end{enumerate}
    
    Let us now assume that (2) holds. Then we estimate again using Jensen's inequality analogously to above
    \begin{align*}
    18 \int_{\partial B_t(0)} W(\beta) \, d \mathcal{H}^1 &\geq \int_{\partial B_t(0)} |\beta - \xi|^2 \, d\mathcal{H}^1 \\
    &\geq \frac1{8 t} \left( \int_{\partial B_t(0)} (\beta-\xi)\cdot \tau  \, \mathcal{H}^1 \right)^2 \\
    &= \frac{1}{8t} \left| \operatorname{curl} \beta (B_{r}(0)) \right|^2 \geq \frac{1}{8t} \sigma\left| \operatorname{curl} \beta (B_{r}(0)) \right|.
    \end{align*}
    Then, we estimate using (1) or (2) and $\frac{|\operatorname{curl }\beta (B_r(0))|}{144 t} \leq \frac{25}{144} \leq \frac12$
    \begin{align}
        \int_{\partial B_t(0)} W(\beta) \, d\mathcal{H}^1 + \sigma |D\beta|(\partial B_t(0)) &\geq \min\left\{ \int_{\partial B_t(0)} W(\beta) \, d \mathcal{H}^1, \sigma |\partial_{\tau} \beta|(\partial B_t(0)) \right\}  \\
        &\geq \min\left\{ \frac{1}{144t} \sigma\left| \operatorname{curl} \beta (B_r(0)) \right|, \sigma/2 \right\}  \\
        &= \frac{1}{144t} \sigma |\operatorname{curl} \beta (B_r(0))|. \label{eq: curl small 1}
    \end{align}
    Combining \eqref{eq: curl large1} and \eqref{eq: curl small 1} then yields
    \begin{align}
    \int_{B_R(0) \setminus B_r(0)} W(\beta) \, d\mathcal{L}^2 + \sigma |D\beta|(A) &\geq \int_{r}^R \left(\int_{\partial B_t(0)} W(\beta) \, d\mathcal{H}^1 + \sigma |D\beta|(\partial B_t(0)) \right) \\ &\geq \int_r^R \frac1{144t} \sigma |\operatorname{curl} \beta(B_r(0))| \, dt = \frac1{144} \sigma |\operatorname{curl} \beta(B_r(0))| \log(R/r).
    \end{align}

    Estimate \eqref{eq: est annulus quadratic} follows analogously using the second statement of Lemma \ref{lemma: alternative} and Remark \ref{rem: alternative}, respectively. 

\end{proof}

Next, we will briefly recall the celebrated ball construction that was developed in the seminal papers \cite{Je99,Sa98} in the context of vortices in the Ginzburg-Landau model (see also \cite{gin18,GiNeZw24,dLGaPo12} for applications in the context of dislocation model and \cite{GiGl25} to surgically remove the jump set of an $SBV^p$-funtion).

\begin{lemma}[Ball-construction, \cite{GiNeZw24}]\label{lemma: ball construction}
    Let $(B_{r_i}(p_i))_{i \in I} $ be a finite family of open balls in $\R^2$. Then for every $t>0$ there exists a finite family of open balls $(B_{r_i(t)}(p_i(t)))_{i \in I(t)}$ with pairwise disjoint closures such that the following properties hold: 
    \begin{enumerate}
        \item $\sum_{i \in I(t)} r_i(t) \leq e^t \sum_{i \in I} r_i$,
        \item $\bigcup_{i \in I} B_{r_i}(p_i) \subseteq \bigcup_{i \in I(t)} B_{r_i(t)}(p_i(t))$,
        \item for all $s \in [0, t]$ and $i \in I(s)$ there exists  a unique $j \in I(t)$ such that $
B_{e^{t-s}r_i(s)}(p_i(s)) \subseteq B_{r_j(t)}(p_j(t))$,
\item there exists $0<t_1 < \dots < t_N$ with $N \leq \#I$ such that for every $t_n < t < t_{n+1}$ it holds $I(t) = I(t_n)$ and for every $i \in I(t)$ it holds $p_i(t) = p_i(t_n)$ and $r_i(t) = e^{t-t_n}r_i(t_n)$.
    \end{enumerate}
\end{lemma}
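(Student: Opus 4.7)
The plan is to construct the evolving family $(B_{r_i(t)}(p_i(t)))_{i\in I(t)}$ by a continuous-time flow that alternates homothetic expansion with discrete merging events. I would initialize $I(0)=I$, $r_i(0)=r_i$, $p_i(0)=p_i$, and on each open interval between critical times $(t_n,t_{n+1})$ let the centers remain fixed while the radii grow exponentially, $r_i(s)=e^{s-t_n}r_i(t_n)$. The critical times are the successive instants at which two balls of the current family first become externally tangent, i.e.\ $|p_i(s)-p_j(s)|=r_i(s)+r_j(s)$ for some $i\neq j\in I(s)$; since all balls grow by the same exponential factor and the list of pairs is finite, each next critical time is well defined.

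At a critical time $t_{n+1}$ I merge one such tangent pair $(i,j)$ by removing both from $I(t_{n+1}^-)$ and inserting a single new ball, namely the smallest closed disk containing $\overline{B_{r_i(t_{n+1})}(p_i)}\cup\overline{B_{r_j(t_{n+1})}(p_j)}$. A short planar computation shows that for two externally tangent disks this minimal enclosing disk has radius exactly $r_i(t_{n+1})+r_j(t_{n+1})$, is centered on the segment $[p_i,p_j]$, and of course contains both old disks. If the new ball happens to meet a third ball of the current family, I cascade the merging at the same time $t_{n+1}$ until the resulting family again has pairwise disjoint closures; this cascade terminates because each pairwise merger strictly lowers $\#I$. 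The fundamental conservation is that the sum of radii is non-increasing at every pairwise merger and grows precisely like $e^{s-t_n}$ between mergers, yielding $\sum_{i\in I(t)}r_i(t)\leq e^{t}\sum_{i\in I}r_i$, which is property (1). Property (2) is immediate because closures only grow and each merger contains the pre-merger balls, and (4) follows because each pairwise merger strictly decreases $\#I(s)$ by at least one, so at most $\#I-1$ such events can occur.

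The delicate part is the monotonicity statement (3). The plan is to track an individual descendant through the flow. Fix $s<t$ and $i\in I(s)$, and let $t_{n_1}<\dots<t_{n_k}$ denote the critical times in $(s,t]$ at which $i$ (or its descendant after merging) participates. On each subinterval between two consecutive such events, both $B_{e^{\tau-s}r_i(s)}(p_i(s))$ and the current host ball $B_{r_j(\tau)}(p_j(\tau))$ dilate by the same scalar $e^{\tau-t_{n_\ell}}$ from the fixed center $p_j(\tau)$, so the inclusion is preserved as $\tau$ varies. At a merging time, the newly formed merged ball contains $\overline{B_{r_i(t_{n_\ell}^-)}(p_i)}$ by construction, so the inclusion survives the merger. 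Iterating across the finitely many events in $(s,t]$ yields (3); uniqueness of $j$ is forced because the balls in $I(t)$ have pairwise disjoint closures and the inflated ball is connected.

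The main obstacle, insofar as there is a conceptual one, is ensuring that the cascading of simultaneous mergers terminates correctly and that the tangency of a merged ball with a non-participating ball does not violate the strict-disjointness of the final family. The plan resolves this by serializing the cascade at a single critical time into a finite sequence of pairwise mergers, each of which strictly reduces $\#I$, so after finitely many steps all closures are again pairwise disjoint and the sum of radii has only decreased. Beyond this combinatorial bookkeeping and elementary planar geometry, no further analytic input is required, so all four properties follow.
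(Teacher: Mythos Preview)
The paper does not prove this lemma; it is quoted verbatim from the cited reference \cite{GiNeZw24}, so there is no in-paper argument to compare against. Your outline is the standard Jerrard--Sandier ball construction and is essentially correct.

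One small correction to your justification of property (3): during an expansion phase the inflated ball $B_{e^{\tau-s}r_i(s)}(p_i(s))$ and its current host $B_{r_j(\tau)}(p_j(\tau))$ do \emph{not} dilate from the same center --- the former keeps center $p_i(s)$, the latter $p_j(\tau)$, and these are in general distinct. The correct (and equally elementary) reason the inclusion persists is that $B_R(p)\subseteq B_{R'}(q)$ means $|p-q|\le R'-R$, and for any $\lambda\ge 1$ this gives $|p-q|\le \lambda(R'-R)$, i.e.\ $B_{\lambda R}(p)\subseteq B_{\lambda R'}(q)$. With this fix your inductive tracking through expansion intervals and merging events goes through as written. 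You should also mention explicitly that if the initial family does not already have pairwise disjoint closures, the construction begins with a merging cascade at time $0$ before any expansion; the lemma does not assume the starting balls are disjoint.
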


Combining this construction with the lower bound on annuli, Proposition \ref{prop: est annulus}, yields the following logarithmic lower bound on the energy.

\begin{proposition}\label{prop: ball constr}
    There exists $c_{bc} > 0$ such that the following holds: Let $\sigma > 0$, $A' \subseteq A\subseteq \R^2$ with $\operatorname{dist}(A',\partial A) =: d$ and $\beta: A \to \R^2$ such that $\operatorname{curl} \beta = \sigma \sum_{i=1}^n \gamma_i \delta_{x_i} * \rho_{\eps}$, $\gamma_i \in \{\pm 1\}$ and $x_i \in \R^2$. Let $(B_{r_i(t)}(p_i(t)))_{i \in I(t)}$ be the output of the ball construction from Lemma \ref{lemma: ball construction} starting with the balls $B_{\eps}(x_i)$. Then it holds for all $T>0$ such that $\eps n e^T \leq d/2$ and $J(T) = \{ i\in I(T): B_{r_i(T)}(p_i(T)) \cap A' \neq \emptyset \}$ 
    \begin{equation}
    \int_{A} W(\beta) \, d\mathcal{L}^2 + \sigma |D\beta|(A) \geq c_{bc} \sigma T \, \left|\operatorname{curl } \beta(\bigcup_{ i \in J(T)} B_{r_i(T)}(p_i(T)))\right|
    \end{equation}
    and
    \begin{equation}
    \int_{A} W(\beta) + 2 \sigma |D(\Phi \circ \beta_1)| + 2 \sigma |D(\Phi \circ \beta_2)| \, d\mathcal{L}^2 \geq c_{bc} \sigma T \, \left|\operatorname{curl } \beta(\bigcup_{i \in J(T)} B_{r_i(T)}(p_i(T)))\right|.
    \end{equation}
\end{proposition}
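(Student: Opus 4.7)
The plan is to run the ball construction of Lemma \ref{lemma: ball construction} on the initial family $\{B_\eps(x_i)\}_{i=1}^n$, track it up to time $T$, and apply the annular lower bound of Proposition \ref{prop: est annulus} on each growth annulus lying inside a final ball of $J(T)$. I would first verify that every such final ball lies inside $A$: by property (1) of Lemma \ref{lemma: ball construction} the total radius at time $T$ is at most $e^T n \eps \leq d/2$, so each individual $B_{r_j(T)}(p_j(T))$ has radius $\leq d/2$, and if $j \in J(T)$ the ball meets $A'$ and hence lies in the $d$-neighborhood of $A'$, which is contained in $A$.

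Next I would fix the merger times $0 < t_1 < \ldots < t_N$ from property (4) and, for each pair $(n,i)$ with $i \in I(t_n)$ whose descendant $j(i,n) \in I(T)$ (unique by property (3)) lies in $J(T)$, consider the open annulus
\[
A_{n,i} := B_{e^{t_{n+1}-t_n} r_i(t_n)}(p_i(t_n)) \setminus \overline{B_{r_i(t_n)}(p_i(t_n))}.
\]
By property (3) applied from $s = t_{n+1}$ to $t = T$, $A_{n,i} \subseteq B_{r_{j(i,n)}(T)}(p_{j(i,n)}(T)) \subseteq A$. Since $\operatorname{supp}(\rho_\eps * \delta_{x_k}) \subseteq B_\eps(x_k)$ and property (3) forces every initial ball $B_\eps(x_k)$ to lie either entirely inside $B_{r_i(t_n)}(p_i(t_n))$ or entirely outside $B_{e^{t_{n+1}-t_n} r_i(t_n)}(p_i(t_n))$, the measure $\operatorname{curl}\beta$ vanishes on $A_{n,i}$, and its mass on the inner ball is $\sigma$ times an integer. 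Proposition \ref{prop: est annulus} applied with $R/r = e^{t_{n+1}-t_n}$ therefore gives
\[
\int_{A_{n,i}} W(\beta)\, d\mathcal{L}^2 + \sigma |D\beta|(A_{n,i}) \geq c\,\sigma\, |\operatorname{curl}\beta(B_{r_i(t_n)}(p_i(t_n)))|\, (t_{n+1}-t_n),
\]
and analogously via \eqref{eq: est annulus quadratic} in the quadratic case. The annuli $\{A_{n,i}\}$ in this sum are pairwise disjoint: at a fixed $n$ they sit inside the pairwise disjoint grown balls $B_{e^{t_{n+1}-t_n}r_i(t_n)}(p_i(t_n))$ (which stay disjoint up to $t_{n+1}^-$ since no merger occurs on $(t_n,t_{n+1})$), and for $n < n'$ the annulus $A_{n,i}$ is contained in some $B_{r_{i''}(t_{n'})}$ while $A_{n',i'}$ lies outside $B_{r_{i'}(t_{n'})}$, so disjointness follows from disjointness of balls at time $t_{n'}$.

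To finish I would use ancestor tracking. For each $j \in J(T)$ and $n$, let $A_j(t_n) := \{i \in I(t_n) : j(i,n) = j\}$; by property (3) these are pairwise disjoint across $j$, and the union of initial balls they carry is exactly the set of initial balls contained in $B_{r_j(T)}(p_j(T))$. Since $\operatorname{curl}\beta$ is concentrated in the initial balls and is additive on disjoint sets,
\[
\operatorname{curl}\beta(B_{r_j(T)}(p_j(T))) = \sum_{i \in A_j(t_n)} \operatorname{curl}\beta(B_{r_i(t_n)}(p_i(t_n))),
\]
so by the triangle inequality $\sum_{i \in A_j(t_n)} |\operatorname{curl}\beta(B_{r_i(t_n)})| \geq |\operatorname{curl}\beta(B_{r_j(T)})|$. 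Summing the annular estimate over such $i$, then over $n$ (using $\sum_n (t_{n+1}-t_n) = T$), and then applying one further triangle inequality to pass from $\sum_{j \in J(T)} |\operatorname{curl}\beta(B_{r_j(T)})|$ to $|\operatorname{curl}\beta(\bigcup_{j \in J(T)} B_{r_j(T)}(p_j(T)))|$ (valid because the final balls are pairwise disjoint) produces the claim with $c_{bc}$ equal to the constant $c$ of Proposition \ref{prop: est annulus}. The quadratic version is identical. The main obstacle is purely combinatorial bookkeeping of the ancestor sets $A_j(t_n)$ together with the disjointness and containment verifications; the analytic content is already fully packaged in Proposition \ref{prop: est annulus}.
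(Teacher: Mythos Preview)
Your proposal is correct and follows essentially the same route as the paper's proof: run the ball construction, apply Proposition~\ref{prop: est annulus} on each growth annulus between consecutive merger times, and sum using ancestor tracking (what the paper calls the sets $J_i(t_m)$). You are in fact more explicit than the paper about verifying that $\operatorname{curl}\beta$ vanishes on each annulus, that the annuli are pairwise disjoint across different time levels, and about the two triangle-inequality steps at the end; the paper leaves these details implicit.
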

\begin{proof}
We only prove the first inequality. The second inequality can be shown completely analogously.
Let $0< t_1 < \dots < t_{N} < T$, $N \leq n$, be the merging time of the ball construction up to time $T$, see property (4) in Lemma \ref{lemma: ball construction}. We set $t_{N+1} = T$ and $t_0 = 0$. 
Now, fix $i \in I(t_m)$, $0 \leq m \leq N$. Then it follows by Proposition \ref{prop: est annulus} that 
\begin{align}
&\int_{B_{e^{t_{m+1} - t_m}r_i(t_{m})}(p_i(t_{m})) \setminus B_{r_i(t_{m})}(p_i(t_{m}))} W(\beta) \, d\mathcal{L}^2 + \sigma |D\beta|(B_{e^{t_{m+1} - t_m}r_i(t_{m})}(p_i(t_{m})) \setminus B_{r_i(t_{m})}(p_i(t_{m}))) \\ \geq &c \sigma |(\operatorname{curl } \beta)( B_{r_i(t_{m})}(p_i(t_{m})))| \, (t_{m+1} - t_m).
\end{align}
Eventually, we notice that by property (1) of Lemma \ref{lemma: ball construction} it holds for all $B_{r_i(T)}(p_i(T)) \cap A' \neq \emptyset$ that $r_i(T) \leq n \eps e^T \leq d/2$ and therefore $B_{r_i(T)}(p_i(T)) \subseteq A$. 
For $i \in I(T)$ let us additionally define 
\[
J_i(t_m) = \{ j \in I(t_m): B_{r_j(t_m)}(p_j(t_m)) \subseteq B_{r_i(T)}(p_i(T))\}.
\]
Summing over all $i \in J(T)$, all $j \in J_i(t_m)$ and $m$ it follows using (3)  of Lemma \ref{lemma: ball construction} 
\begin{align}
    &\int_{A} W(\beta) \, d\mathcal{L}^2 + \sigma |D\beta|(A) \\ \geq &c \sigma \sum_{i \in J(T)} \sum_{m=0}^{N} \sum_{j \in J_i(t_m)}  |\operatorname{curl } \beta(B_{e^{t_{m+1} - t_m}r_j(t_m)}(p(t_m)) \setminus B_{r_j(t_m)}(p(t_m)))| \, (t_{m+1} - t_m) \\
    \geq &c T \sigma   \left|\operatorname{curl } \beta(\bigcup_{B_{r_i(T)}(p_i(T)) \cap A' \neq \emptyset} B_{r_i(T)}(p_i(T)))\right|.
\end{align}
Here, we used that the balls $B_{r_j(t_m)}(p(t_m))$ are pairwise disjoint.
\end{proof}

\subsection{The lower bound for the energies \texorpdfstring{$E^{(1)}_{\sigma,\theta,\eps}$}{E(1) sigma,theta,epsilon} and \texorpdfstring{$E^{(2)}_{\sigma,\theta,\eps}$}{E(2) sigma,theta,epsilon}}

In this section we will prove the lower bound from Theorem \ref{thm: main} for the energies $E^{(1)}_{\sigma,\theta,\eps}$ and $E^{(2)}_{\sigma,\theta,\eps}$.
Let us briefly comment on the strategy. 
First, we will show only that $E_{\sigma,\theta,\eps}^{(\epsilon)} \gtrsim \min\{ \theta^2, \sigma \left( \frac{|\log \sigma|}{|\log \theta|} + 1 \right), \theta \frac{\sigma^3}{\eps^2} \}$. 
Then we will use Proposition \ref{prop: ball constr} to obtain the additional logarithmic term $\theta \sigma |\log \theta|$ in the last term of the energy estimate.
For technical reasons, the proof of the above estimate will be obtained separately in different parameter regimes: if $\theta \gg 0$ and $\sigma > 0$ then $\frac{|\log \sigma|}{|\log \theta|} \sim |\log \sigma|$ and it is enough to show that $E_{\sigma,\theta,\eps}^{(\epsilon)} \gtrsim \min\{ \theta^2, \sigma \left( |\log \sigma| + 1 \right), \theta \frac{\sigma^3}{\eps^2} \}$, see Proposition \ref{prop: theta large}; if $\sigma \gg \theta^{k_0}$ for some $k_0 > 0$, then $\frac{|\log \sigma|}{|\log \theta|} \lesssim k_0$ and it essentially suffices to prove $E_{\sigma,\theta,\eps}^{(\epsilon)} \gtrsim \min\{ \theta^2, \sigma, \theta \frac{\sigma^3}{\eps^2} \}$, see Proposition \ref{prop: sigma simple}; eventually we prove the full estimate only if $\sigma \leq \theta^{k_0}$ and $\theta \ll 1/2$, see Proposition \ref{eq: sigma small}.

We start with the estimate for large values of $\theta$. 

    \begin{proposition}\label{prop: theta large}
         For all $\theta_0 > 0$ there exists $c_{LB}^{(1)}>0$ (depending on $\theta_0$) such that it holds for all $\sigma \geq \sqrt{2} \pi\eps> 0$, $\theta_0 \leq \theta  \leq 1/2$ and $\epsilon \in \{1,2\}$ that
    \[
    \inf_{\beta \in \mathcal{A}^{(\epsilon)}_{\sigma,\theta,\eps}} E^{(\epsilon)}_{\sigma,\theta,\eps}(\beta) \geq c_{LB}^{(1)} \min\left\{ \theta^2, \sigma ( |\log \sigma| + 1), \theta \frac{\sigma^3}{\eps^2}   \right\}.
    \]
    \end{proposition}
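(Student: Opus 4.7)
The plan is to dichotomize on the total vortex content of $\beta$. Write $\mu = \operatorname{curl}\beta$ and let $n = |\mu|(\Omega)/\sigma$ denote the number of vortices of $\beta$.

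In the regime of many vortices, $n \geq \theta_0/\sigma$, the argument is direct. Since the balls $B_\eps(x_1),\dots,B_\eps(x_n)$ are pairwise disjoint and contained in $\Omega$ by the definition of $\mathcal{M}_{\sigma,\eps}$, applying Lemma \ref{lem: lb vortex general} (after translating to each vortex) on each of them yields
\[
E^{(\epsilon)}_{\sigma,\theta,\eps}(\beta) \;\geq\; \int_\Omega W(\beta)\, d\mathcal{L}^2 \;\geq\; n c_v \frac{\sigma^4}{\eps^2} \;\geq\; c_v \theta_0 \frac{\sigma^3}{\eps^2} \;\geq\; 2 c_v \theta_0 \cdot \theta \frac{\sigma^3}{\eps^2},
\]
using $\theta \leq 1/2$, which delivers the third term in the minimum.

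In the remaining regime $n < \theta_0/\sigma$, the total curl mass satisfies $|\mu|(\Omega) < \theta_0$, which (after replacing the threshold by a small universal multiple of $\theta_0$ if needed) is strictly smaller than the gap between the incompatible boundary value $1-2\theta$ and the closer well $\pm 1$. In this regime I adapt the proof of the scaling law in the curl-free case from \cite{GinZwi:22}. The identity from Lemma \ref{lemma: rewrite},
\[
\int_{y_1}^{y_2} \beta_2(x,t)\, dt - (1-2\theta)(y_2-y_1) = \int_0^x \bigl(\beta_1(s,y_2) - \beta_1(s,y_1)\bigr)\, ds + \mu((0,x)\times(y_1,y_2)),
\]
contains $\mu$ only in the last, small, additive term. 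The curl-free argument of \cite{GinZwi:22} combines this identity (with $\mu=0$) with slicing alternatives -- provided in our setting by Lemma \ref{lemma: alternative}(1) for the BV energy $E^{(1)}$ and by Lemma \ref{lemma: alternative}(2) together with Remark \ref{rem: alternative} for the $L^2$-energy $E^{(2)}$ -- and a dyadic decomposition in the $x$-direction: on each of the $\sim |\log\sigma|/|\log\theta|$ geometric scales between $\sigma$ and $1$, either one records a bulk $W$-contribution of the right order or picks up a jump/gradient contribution of order $\sigma$, yielding the $\sigma(|\log\sigma|/|\log\theta|+1)$ piece; a single-scale version of the same alternative yields the $\theta^2$ piece. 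Because $\theta \geq \theta_0$ implies $|\log\theta| \leq |\log\theta_0|$, this produces the claimed lower bound $c \min\{\theta^2,\sigma(|\log\sigma|+1)\}$.

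The main obstacle will be verifying that the slicing alternatives of \cite{GinZwi:22} survive the non-zero $\mu$-perturbation in the identity above. Concretely, at every dyadic scale one needs the effective deficit $\int_{y_1}^{y_2}(\beta_2 - (1-2\theta))\,dt$ to remain a fixed fraction of $(y_2 - y_1)\theta$, despite the additive error $\mu((0,x)\times(y_1,y_2))$; this is the reason for choosing the threshold $n\sigma$ sufficiently small compared to $\theta_0$. Once this perturbation is absorbed, the scale-by-scale lower bounds and their summation proceed essentially as in the curl-free case.
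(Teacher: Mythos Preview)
Your proposal is correct and shares the paper's strategy: control the curl mass (you dichotomize on $n$; the paper instead assumes the energy is below $c_1\min\{\dots\}$ and deduces $\int|\operatorname{curl}\beta| \leq (c_1/c_v)\theta$ via Lemma~\ref{lem: lb vortex general}), then run the curl-free slicing argument with the small $\mu$-error from Lemma~\ref{lemma: rewrite} absorbed. The paper differs in two implementation details worth noting. First, it separates off $\sigma \geq 1/4$ at the outset: there a single vortex already costs $c_v\sigma^4/\eps^2 \geq c_v/16$, and the vortex-free case is covered directly by \cite[Lemma~4]{GinZwi:22}. Second, for $\sigma < 1/4$ it does \emph{not} use a dyadic decomposition but a continuous slicing: for each $x \in (c_2\sigma^{1/2}, c_2)$ it selects by averaging an interval $I$ of length $x/c_2$, applies Lemma~\ref{lemma: alternative} on $\{x\}\times I$ to obtain the pointwise alternative $E(\beta;\{x\}\times(0,1)) \gtrsim \min\{\sigma/x, \theta_0^2\}$, and integrates in $x$ to get $\int_{c_2\sigma^{1/2}}^{c_2} \sigma/x\,dx \sim \sigma|\log\sigma|$. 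The dyadic route you sketch is the one the paper reserves for Proposition~\ref{eq: sigma small}, where the sharper count $|\log\sigma|/|\log\theta|$ actually matters; here, since $|\log\theta|$ is bounded, the continuous argument is shorter and avoids the scale-by-scale bookkeeping.
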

    \begin{proof}
    Let $\beta \in \mathcal{A}^{(\epsilon)}_{\sigma,\theta,\eps}$.
First, let $\sigma \geq 1/4$. If $|\operatorname{curl} \beta|((0,1)^2) \neq 0$ it follows from Lemma \ref{lem: lb vortex general} that 
\[
E_{\sigma,\theta,\eps}^{(\epsilon)}(\beta) \geq c_v \frac{\sigma^4}{\eps^2} \geq \frac{c_v}{16} \geq \frac{c_v}{16} \min\left\{ \theta^2, \sigma ( |\log \sigma| + 1), \theta\frac{\sigma^3}{\eps^2}   \right\}. 
\]
On the other hand, if $\operatorname{curl } \beta = 0$ then it follows from \cite[Lemma 4]{GinZwi:22} that
\[
E_{\sigma,\theta,\eps}^{(\epsilon)}(\beta) \geq c \min\left\{ \theta^2, \sigma (|\log \sigma| + 1)  \right\}.
\]
Hence, we will now assume that $0 < \sigma \leq 1/4$. 
Define $c_1 := \min\left\{ c_v\frac{\theta_0}{32}, \frac{\theta_0}{20} \right\}$ and $c_2 := \frac{\theta_0}{20}$, where $c_v >0$ is the constant from Lemma \ref{lem: lb vortex general}.
We may assume that it holds $E_{\sigma,\theta,\eps}^{(\epsilon)}(\beta) \leq c_1 \min\left\{ \theta^2, \sigma ( |\log \sigma| + 1), \theta\frac{\sigma^3}{\eps^2}   \right\}$.
By Lemma \ref{lem: lb vortex general} this implies that 
\[
 \#\text{vortices in }(0,1)^2 \cdot c_v \frac{\sigma^4}{\eps^2} \leq E_{\sigma, \theta, \eps}^{(\epsilon)}(\beta) \leq c_1 \theta \frac{\sigma^3}{\eps^2},
\]
which implies
\[
\int_{(0,1)^2} |\operatorname{curl } \beta| \, dx \leq \theta \frac{c_1}{c_v} .
\]

Now, consider 
\[
x \in \left( c_2 \sigma^{1/2}, c_2 \right).
\]
Then find an interval $I = (y_1,y_2) \subseteq (0,1)$ with $|I| = \frac1{c_2} x$ such that
\begin{enumerate}
    \item $E_{\sigma, \theta, \eps}^{(\epsilon)}(\beta; \{x \} \times I) \leq 16 |I| E_{\sigma, \theta, \eps}^{(\epsilon)}(\beta; \{x\} \times (0,1))$,
    \item $E_{\sigma, \theta, \eps}^{(\epsilon)}(\beta; (0,1) \times \{y_i\}) \leq 16 E_{\sigma,\theta,\eps}^{(\epsilon)}(\beta)$, $i=1,2$,
    \item $\int_{(0,1) \times I} |\operatorname{curl } \beta| \, dx \leq 16 |I| \frac{c_1}{c}$.
\end{enumerate}

Next, note that since $0 < c_1 \leq c_v\frac{\theta_0}{32}$ it follows that 
\begin{equation}\label{eq: curl small}
\int_{(0,x) \times I} |\operatorname{curl } \beta| \, d\mathcal{L}^2 \leq \int_{(0,1) \times I} |\operatorname{curl } \beta| \, d\mathcal{L}^2 \leq 16 |I| \frac{c_1}{c_v} \leq \frac{\theta_0}2 |I| \leq \frac{\theta}2 |I|.
\end{equation}
Moreover, we estimate for $i=1,2$ using Lemma \ref{lemma: est W} and (2)
\begin{align}
    \int_0^x |\beta_1(s,y_i)| \, ds &\leq x + \int_0^x |\beta_1(s,y_i) \pm 1| \, ds \\
    &\leq c_2 |I| + x^{1/2} E_{\sigma,\theta,\eps}^{(\epsilon)}(\beta; (0,1) \times \{y_i\})^{1/2} \\
    &\leq c_2 |I| + 4 x^{1/2} E_{\sigma,\theta,\eps}^{(\epsilon)}(\beta)^{1/2} \\
    &\leq c_2 |I| + 4 \left( c_1 x \sigma (|\log \sigma| + 1) \right)^{1/2} \\
    &\leq 5c_2 |I| \leq \frac{\theta_0}4 |I| \leq \frac{\theta}4 |I|,
\end{align}
where we used that $c_1 \sigma (|\log \sigma| + 1) \leq c_2 \sigma^{1/2} \leq x$ since  $0 < c_1 \leq c_2$ and $c_2 = \frac{\theta_0}{20}$.
Using Lemma \ref{lemma: rewrite} we then find
\begin{align} \label{eq: int beta second}
    \left| \int_I \beta_2(x,t) \, dt  \right| &\leq  (1-2\theta) |I| + \int_0^x |\beta(s,y_2)| + |\beta(s,y_1)| \, ds + \int_{(0,x) \times I} |\operatorname{curl } \beta| \, d\mathcal{L}^2  \leq (1-\theta) |I|.
\end{align}
From now on we only consider $\epsilon = 1$. The case $\epsilon = 2$ is completely analogous.
By Lemma \ref{lemma: alternative} applied to $\beta_2(x,\cdot)$ one of the following holds on $I$:
\begin{itemize}
    \item[(4)] $|\partial_2 \beta_2(x,\cdot)|(I) \geq \frac12$,
    \item[(5)] there exists $\xi \in \{ \pm 1\}$ such that it holds  $|\beta_2(x,y) - \xi| \leq  3 \min_{\xi' = \pm 1} |\beta(x,y) - \xi'|$ for all a.e.~$y \in I$.
\end{itemize}
If (4) is true we have 
\[
  16|I| E^{(1)}_{\sigma,\theta,\eps}(\beta; \{x\} \times (0,1)) \geq E^{(1)}_{\sigma,\theta,\eps}(\beta; \{x\} \times I) \geq \sigma / 2,
\]
which implies 
\begin{equation} \label{eq: 4.}
E_{\sigma,\theta,\eps}^{(1)}(\beta; \{x\} \times (0,1)) \geq \sigma \frac{c_2}{32 x}.
\end{equation}
On the other hand, if (5) is true then it follows using Lemma \ref{lemma: est W} from \eqref{eq: int beta second}
\[
\int_I W(\beta(x,y)) \, dy \geq \frac19 \int_I |\beta(x,y) - \xi|^2 \, dy \geq \frac1{9 |I|} \left|\int_I \beta_2(x,y) - \xi \, dy \right|^2 \geq \frac{\theta^2}{9} |I|.  
\]
Using (1), we obtain
\begin{equation}\label{eq: 5.}
    E_{\sigma,\theta,\eps}^{(1)}(\beta; \{x\} \times (0,1)) \geq \frac1{16|I|} E_{\sigma,\theta,\eps}^{(1)}(\beta; \{x\} \times I) \geq \frac{\theta^2}{144}.
\end{equation}
Combining \eqref{eq: 4.} and \eqref{eq: 5.} yields using $c_2 \frac{\sigma}{x} \leq \sigma^{1/2} \leq 1$ for all $x \geq c_2 \sigma^{1/2}$
\[
E_{\sigma,\theta,\eps}^{(1)}(\beta) \geq \frac{\theta_0^2}{144}\int_{c_2 \sigma^{1/2}}^{c_2} \min\left\{ c_2\frac{\sigma}x, 1 \right\} \, dx \geq   \frac{c_2 \theta_0^2}{144} \int_{c_2 \sigma^{1/2}}^{c_2}  \frac{\sigma}x \, dx = \frac{c_2 \theta_0^2}{288} \sigma  |\log(\sigma)| \geq \frac{c_2 \theta_0^2}{576}  \sigma  (|\log(\sigma)| + 1 ).
\]
Note that we used in the last inequality that $|\log \sigma| \geq \log 4 \geq 1$.
\end{proof}

Next, we prove a lower bound that is useful as long as $\sigma \geq \theta^{k_0}$ for some $k_0 > 0$.

\begin{proposition}\label{prop: sigma simple}
    There exists $c_{LB}^{(2)}>0$ such that it holds for all $\sigma, \eps, \sigma, \theta > 0$ with $\sigma \geq \sqrt{2} \pi \eps >0$ and $\epsilon \in \{\pm 1\}$ that
    \[
    \inf_{\beta \in \mathcal{A}^{(\epsilon)}_{\sigma,\theta,\eps}} E^{(\epsilon)}_{\sigma,\theta,\eps}(\beta) \geq c_{LB}^{(2)} \min\left\{ \theta^2, \sigma, \theta \frac{\sigma^3}{\eps^2}  \right\}.
    \]
\end{proposition}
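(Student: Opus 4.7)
The plan is to mimic the strategy of Proposition~\ref{prop: theta large}, simplified in two respects: no integration producing the $|\log\sigma|$ factor is needed (since we only aim at $\sigma$), and all constants are taken universal rather than $\theta_0$-dependent. Arguing by contradiction, I would suppose $E^{(\epsilon)}_{\sigma,\theta,\eps}(\beta) \leq c_0\min\{\theta^2,\sigma,\theta\sigma^3/\eps^2\}$ for a small universal constant $c_0>0$. The first step is the vortex-count bound: by Lemma~\ref{lem: lb vortex general} applied to each disjoint vortex $\eps$-ball, $Nc_v\sigma^4/\eps^2 \leq \int_\Omega W \leq E \leq c_0\theta\sigma^3/\eps^2$, so $|\operatorname{curl}\beta|(\Omega) = \sigma N \leq c_0\theta/c_v$, which for $c_0$ small is $\leq \theta/20$. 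Next, by Chebyshev and Fubini, I would select a vertical slice $x_*$ (in a range to be specified depending on the subcase) with controlled vertical slice energy, together with horizontal slices $y_1 \in (0,1/4)$ and $y_2 \in (3/4,1)$ with controlled horizontal slice energies; set $I := (y_1, y_2)$ with $|I| \geq 1/2$.

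I would then apply Lemma~\ref{lemma: alternative} to $\beta_2(x_*,\cdot)$ on $I$. In the BV-large alternative, the slice-wise contribution $\sigma|D\beta_2(x_*,\cdot)|(I) \geq \sigma/2$ yields, after integration over $x_*$, a lower bound of order $\sigma$. In the clustered alternative --- there exists $\xi \in \{\pm 1\}$ with $|\beta_2(x_*,\cdot)-\xi|\leq 3\min_{\xi'=\pm 1}|\beta_2(x_*,\cdot)-\xi'|$ a.e.\ on $I$ --- I would combine Lemma~\ref{lemma: rewrite} with the Cauchy--Schwarz estimate $\int_I|\beta_2(x_*,y)-\xi|\,dy \leq 3\sqrt{|I|\int_I W(\beta(x_*,\cdot))\,dy}$ (justified by Lemma~\ref{lemma: est W}): the boundary mismatch $|\xi-(1-2\theta)||I| \geq 2\theta|I|$ on the left-hand side then forces $\int_I W(\beta(x_*,\cdot))\,dy \gtrsim \theta^2|I|$, yielding after integration over $x_*$ a lower bound of order $\theta^2$. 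Together with the vortex bound from the first step, this produces the three-term minimum $\min\{\theta^2,\sigma,\theta\sigma^3/\eps^2\}$.

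The main obstacle is the control of the cross-term $\int_0^{x_*}(\beta_1(s,y_2)-\beta_1(s,y_1))\,ds$ on the right-hand side of Lemma~\ref{lemma: rewrite}, which must be shown to be strictly smaller than $2\theta|I|$ uniformly in $\theta \in (0,1/2]$. The $L^2$-estimate $\int_0^{x_*}|\beta_1(s,y_i)|\,ds \leq x_* + \sqrt{x_*\int_0^{x_*} W(\beta(\cdot,y_i))\,ds}$ (via Lemma~\ref{lemma: est W}) is effective when $x_*$ is small of order $\theta$, while the $BV$-estimate $\leq |D\beta|(\Omega) \leq E/\sigma$ is effective when $\sigma$ is large relative to $\theta$; combining them through a case distinction on the sizes of $\sigma$ and $\theta$ (and a suitable choice of the $x_*$-range) is expected to close the argument. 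The quadratic energy $E^{(2)}$ is handled in parallel using Lemma~\ref{lemma: alternative}(2) and the $\Phi$-based Young inequality of Lemma~\ref{lem: properties phi}.
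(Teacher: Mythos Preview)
Your overall strategy is sound and close to the paper's, but there is a genuine gap in the control of the cross-term $\int_0^{x_*}(\beta_1(s,y_2)-\beta_1(s,y_1))\,ds$ in the regime where $\theta$ is small and $\sigma\lesssim\theta$. Consider for concreteness $\sigma<\theta^2$ and $\eps$ so small that $\theta\sigma^3/\eps^2\geq\sigma$; then the target lower bound is $E\gtrsim\sigma$. Your $L^2$-estimate $\int_0^{x_*}|\beta_1(s,y_i)|\,ds\leq x_*+\sqrt{x_*E}$ forces $x_*\lesssim\theta$ in order to be $\leq\theta|I|/4$. But if $x_*$ is drawn from an interval of width $O(\theta)$, Chebyshev only gives $E(\{x_*\}\times(0,1))\lesssim E/\theta$, and the BV-large alternative yields merely $E\gtrsim\sigma\theta$, not $\sigma$. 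Your $BV$-estimate $\leq|D\beta|(\Omega)\leq E/\sigma\leq c_0\min\{\theta^2/\sigma,1\}$ is in this regime only $\leq c_0$, not $\lesssim\theta$, so it does not absorb either. The case distinction you sketch does not cover this range.

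The paper closes this gap with two extra ingredients you omit. First, it reduces to $\theta<1/64$ by invoking Proposition~\ref{prop: theta large}. Second---and this is the key point---it applies Lemma~\ref{lemma: alternative} \emph{also to the horizontal slices} $\beta_1(\cdot,y_i)$: since $E\leq c_1\sigma$ forces $|\partial_1\beta_1(\cdot,y_i)|((0,1))\leq 4c_1<\tfrac12$, the clustered alternative holds automatically, so $\beta_1(\cdot,y_i)\approx\xi_i\in\{\pm1\}$. The cross-term then splits as $\int_0^x(\beta_1(s,y_i)-\xi_i)\,ds+(\xi_2-\xi_1)x$; the first part is $O(\sqrt{xE})=O(\theta)$ by $E\leq c_1\theta^2$, while the discrete term $(\xi_2-\xi_1)x$ is moved to the left-hand side. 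With $x\in(3/4,1)$, $y_2-y_1=\tfrac12$, and $\theta<1/64$, an explicit case check over $\bar\xi,\xi_1,\xi_2\in\{\pm1\}$ gives $|(1-\bar\xi-2\theta)\tfrac12-(\xi_2-\xi_1)x|\geq\theta/4$. This lets $x$ range over an interval of width $\sim1$, so the BV-large case delivers $E\gtrsim\sigma$ directly.
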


\begin{proof}
By Proposition \ref{prop: theta large} we may assume that $0 < \theta < \frac1{64}$.
Let $\beta \in \mathcal{A}^{(\epsilon)}_{\sigma,\theta,\eps}$.
Define $c_1 := \min\left\{ \frac1{576}, \frac{c_v}8 \right\}$, where $c_v$ is the universal constant from Lemma \ref{lem: lb vortex general}.
Then we may assume that
\[
 E_{\sigma,\theta,\eps}^{(\epsilon)}(\beta) \leq c_1  \min\left\{ \theta^2, \sigma, \theta \frac{\sigma^3}{\eps^2}  \right\}.
\]
Arguing as in the proof of Proposition \ref{prop: theta large} we find using Lemma \ref{lem: lb vortex general}  that
\begin{equation} \label{eq: est curl average}
 \int_{(0,1)^2} |\operatorname{curl } \beta| \, dx \leq \theta \frac{c_1}{c_v}  \leq \frac{\theta}8.
\end{equation}
Then find $x \in (3/4,1)$ and $y_1 \in (0,1/4)$ such that it holds for $y_2 = y_1 + 1/2$ that
\begin{enumerate}
    \item $E_{\sigma,\theta,\eps}^{(\epsilon)}(\beta, (0,1) \times \{y_i\}) \leq 4 E_{\sigma,\theta,\eps}^{(\epsilon)}(\beta)$ for $i=1,2$;
    \item $E_{\sigma,\theta,\eps}^{(\epsilon)}(\beta, \{x\} \times (0,1)) \leq 4 E_{\sigma,\theta,\eps}^{(\epsilon)}(\beta)$.
\end{enumerate}
By (1) we note that it holds for $i=1,2$
\begin{align}
&|\partial_1 \beta_1 (\cdot,y_i)|((0,1)) \leq \frac{4 c_1}{\sigma} E^{(1)}_{\sigma,\theta,\eps}(\beta) \leq 4 c_1 \\ \text{ and } &\int_0^1 (1-\beta_1(s,y_i)^2)^2 + \sigma^2 |\partial_1 \beta_1(s,y_i)|^2 \, ds \leq 4 E_{\sigma,\theta,\eps}^{(2)}(\beta) \leq 4c_1 \sigma.
\end{align}
Since $0 < c_1 < \frac{1}{128}$, it follows from Lemma \ref{lemma: alternative} that there exists $\xi_i \in \{\pm 1\}$ such that $|\beta_1(s,y_i)  - \xi_i| \leq 3 \min_{\xi' = \pm 1} |\beta_1(s,y_i) - \xi'|$ for all $s \in (0,1)$ and $i=1,2$. 
Using Lemma \ref{lemma: rewrite} we write
\begin{align}
&\int_0^x (\beta_1(s,y_2) - \xi_2) - (\beta_1(s,y_1) - \xi_1) \, ds + (\xi_2 - \xi_1) x + (1-2\theta)(y_2-y_1) \\ = &\int_{y_1}^{y_2} \beta_2(x,t) \, dt +  \int_{(0,x) \times (y_1,y_2)} \operatorname{curl }\beta(s,t) \, dsdt.
\end{align}
We estimate using Lemma \ref{lemma: est W} for $i=1,2$, $c_1 \leq \frac1{576}$ and (1)
\begin{align}
\left| \int_0^x (\beta_1(s,y_i) - \xi_i) \, ds \right| &\leq \left(\int_0^1 |\beta_1(s,y_i) - \xi_i|^2 \, ds \right)^{1/2} \\ &\leq 3 E_{\sigma,\theta,\eps}^{(\epsilon)}(\beta; (0,1) \times \{y_i\})^{1/2} \leq 6 E_{\sigma,\theta,\eps}^{(\epsilon)}(\beta)^{1/2} \leq 6 c_1^{1/2} \theta \leq \frac{\theta}4,
\end{align}
which then implies  together with \eqref{eq: est curl average}
\begin{equation} \label{eq: int beta2}
\left| \int_{y_1}^{y_2} \beta_2(x,t) \, dt - (\xi_2 -\xi_1)x - (1-2\theta)(y_2-y_1) \right| \leq \frac{3\theta}4.
\end{equation}
Now, by Lemma \ref{lemma: alternative} one of the following holds: 
\begin{enumerate}
\item[(3)] $\sigma|\partial_2 \beta_2(x,\cdot)|((y_1,y_2)) \geq \frac{\sigma}2$ for $\epsilon=1$ or $\int_{y_1}^{y_2} (1-\beta_2(x,t)^2)^2 + \sigma^2 |\partial_2 \beta_2(x,t)|^2\,dt \geq \frac{\sigma}{16}$ for $\epsilon =2$; 
\item[(4)] there exists $\bar{\xi} \in \{\pm 1\}$ such that $|\beta_2(x,t) - \bar{\xi}| \leq 3 \min_{\xi' = \pm1 } |\beta_2(x,t) - \xi'|$ holds for all $t \in (y_1,y_2)$.
\end{enumerate}
If (3) holds, we find using (2)
\[
\frac1{16} \min\left\{\theta^2, \sigma, \theta \frac{\sigma^3}{\eps^2}  \right\} \leq \frac{\sigma}{16} \leq E_{\sigma,\theta,\eps}^{(\epsilon)}(\beta; \{x\} \times (y_1,y_2)) \leq 4 E_{\sigma,\theta,\eps}^{(\epsilon)}(\beta).
\]
If (4) holds, we have using \eqref{eq: int beta2} and Lemma \ref{lemma: est W}
\begin{align}
&6 E_{\sigma,\theta,\eps}^{(\epsilon)}(\beta)^{1/2} \\
\geq &3 E_{\sigma,\theta,\eps}^{(\epsilon)}(\beta; \{x\} \times (y_1,y_2))^{1/2} \\
\geq &\left(\int_{y_1}^{y_2} |\beta_2(x,t) - \bar{\xi}|^2 \, dt\right)^{1/2} \\
\geq &\left|\int_{y_1}^{y_2} \beta_2(x,t) - \bar{\xi} \, dt \right| \\
\geq &|(1-\bar{\xi} - 2\theta)(y_2-y_1) - (\xi_2-\xi_1)x| - \frac{3\theta}4 \geq \frac{\theta}4,
\end{align}
where the last estimate follows from the fact that $\bar{\xi},\xi_2,\xi_1 \in \{ \pm 1\}$, $y_2-y_1 = \frac12$ and $x \geq \frac34$.
In particular, we derive that
\[
E_{\sigma,\theta,\eps}^{(\epsilon)}(\beta) \geq \frac{\theta^2}{24^2} \geq \frac1{24^2} \min\left\{\theta^2, \sigma, \theta \frac{\sigma^3}{\eps^2}  \right\}.
\]
This finishes the proof.
\end{proof}

\begin{remark}\label{rem: ani}
    Note that the same proof can also be used to show the same statement for the anisotropic energy $E_{\sigma,\theta,\eps}^{(a)}$.
\end{remark}

Next, we prove a lower bound that includes the term $\sigma \left(\frac{|\log \sigma|}{|\log \theta|} + 1\right)$. For technical reasons, this will be done only for small values of $\sigma$ (relative to $\theta$). In the $\operatorname{curl}$-free case, this gives a more streamlined version of the proof of \cite[Lemma 6]{GinZwi:22}.

\begin{proposition}\label{eq: sigma small}
    Let $k_0 = 32$. Then there exists $c_{LB}^{(3)} > 0$ and  $\theta_0 \in (0,1/2]$ such that it holds for all $0<\theta \leq \theta_0$, $k \in \N$, $k \geq k_0$, $\sigma \in (\theta^{k+1}, \theta^k]$ with $\sqrt{2} \pi \sigma \geq  \eps >0$ and $\epsilon \in \{\pm 1\}$ that
    \[
    \inf_{\mathcal{A}_{\sigma,\theta,\eps}^{(\epsilon)}} E_{\sigma,\theta,\eps}^{(\epsilon)} \geq c_{LB}^{(3)} \min\left\{ \theta^2, k \sigma, \theta \frac{\sigma^3}{\eps^2}  \right\}.
    \]
\end{proposition}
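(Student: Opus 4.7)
The plan is a multi-scale version of the slicing argument of Proposition \ref{prop: theta large}. Since $\sigma \in (\theta^{k+1}, \theta^k]$ we have $|\log \sigma| \sim k |\log \theta|$, and the target $k\sigma$ is to be obtained by accumulating a contribution of order $\sigma$ from each of roughly $k$ disjoint $\theta$-adic vertical strips. I argue by contradiction: assume $E := E^{(\epsilon)}_{\sigma,\theta,\eps}(\beta) \leq c_1 \min\{\theta^2, k\sigma, \theta \sigma^3/\eps^2\}$ for a small constant $c_1 > 0$ to be fixed. As in Proposition \ref{prop: sigma simple}, Lemma \ref{lem: lb vortex general} applied with the bound $E \leq c_1 \theta \sigma^3 / \eps^2$ gives $\int_\Omega |\operatorname{curl}\beta| \leq C c_1 \theta$, so the curl correction in Lemma \ref{lemma: rewrite} is negligible on every relevant box.

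For every $x \in (\theta^k, 1)$ I select via Fubini an interval $I(x) \subset (0,1)$ of length $|I(x)| = c_2 x$ with endpoints $y^1(x), y^2(x)$ such that (i) the slice energy on $\{x\}\times I(x)$ is at most $16|I(x)|$ times the slice energy on $\{x\}\times(0,1)$, (ii) the two horizontal-slice energies $E(\beta;(0,1)\times\{y^i(x)\})$ are bounded by a multiple of $E$, and (iii) $\int_{(0,x)\times I(x)} |\operatorname{curl}\beta| \leq \frac{\theta}{4} |I(x)|$. Applying Lemma \ref{lemma: alternative} to $\beta_2(x,\cdot)$ on $I(x)$ and running the computation of Proposition \ref{prop: theta large} almost verbatim yields the per-slice dichotomy: either (a) $E(\beta;\{x\}\times(0,1)) \geq c \sigma/x$, or (b) $E(\beta;\{x\}\times(0,1)) \geq c \theta^2$.

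Now partition $(\theta^k, 1)$ into $\theta$-adic strips $S_j := (\theta^j, \theta^{j-1})$, $j=1,\ldots,k$, and denote by $A_j \subset S_j$ the subset of $x$ for which option (a) holds. Integrating the per-slice bound over $x \in S_j$ and using $1/x \geq 1/\theta^{j-1}$ gives
\begin{equation*}
E(\beta; S_j \times (0,1)) \geq \frac{c\sigma}{\theta^{j-1}} |A_j| + c \theta^2 \left(|S_j| - |A_j|\right).
\end{equation*}
Minimising over $|A_j| \in [0, |S_j|]$ compares $\sigma/\theta^{j-1}$ with $\theta^2$; since $\sigma \leq \theta^k \leq \theta^{j+1}$ for $j \leq k-1$, the minimum is attained at $|A_j| = |S_j|$, yielding $E(\beta; S_j \times (0,1)) \geq c\sigma(1-\theta)$. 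Summing over the $k-1$ disjoint strips and using $k \geq k_0 = 32$ produces $E \geq c(k-1)\sigma(1-\theta) \geq \frac{c}{4} k \sigma$, contradicting the assumption for $c_1 < c/4$.

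The main technical obstacle is extending the per-slice argument of Proposition \ref{prop: theta large}, which was stated only for $x \in (c_2\sigma^{1/2}, c_2)$, to the enlarged range $x \in (\theta^k, 1)$. In particular, the option-(b) step relies on controlling the horizontal-slice correction $\int_0^x |\beta_1 - \zeta^i(x)|\,ds$ by $\frac{\theta}{4}|I(x)|$ uniformly in $x$. The Cauchy--Schwarz bound $\int_0^x |\beta_1 - \zeta^i|\,ds \leq \sqrt{x}\,\sqrt{E(\beta;(0,1)\times\{y^i(x)\})} \lesssim \sqrt{xE}$ suffices when $\sqrt{xE} \leq \theta x/C$, i.e.\ $E \leq c x \theta^2$. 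For the smallest relevant $x \sim \theta^{k-1}$ this becomes $E \leq c\theta^{k+1}$, comparable to $c_1 k \sigma$ when $c_1$ is chosen small enough depending only on $k_0$; the hypothesis $k \geq k_0 = 32$ and the smallness of $\theta_0$ provide the required margin. A secondary subtlety is the consistent selection of $\xi \in \{\pm 1\}$ in option (b) across slices; this is automatically handled because the one-sided integral bound $|\int_{I(x)} \beta_2| \leq (1-\theta)|I(x)|$ (as derived in the proof of Proposition \ref{prop: theta large}) rules out either choice of $\xi$ making the deficit smaller than $\theta|I(x)|$.
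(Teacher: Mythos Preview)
Your overall multi-scale strategy is right, but the per-slice dichotomy you assert does not follow from the argument of Proposition~\ref{prop: theta large}, and the numerical check you give for the horizontal-correction bound is incorrect. In Proposition~\ref{prop: theta large} one has $\theta\ge\theta_0$, so taking $|I|=x/c_2$ with $c_2\sim\theta_0$ makes $|I|$ large compared to $x$, and the trivial estimate $\int_0^x|\beta_1|\le x+\sqrt{x E}\le\tfrac{\theta}{4}|I|$ is automatic. Here $\theta$ is small, $|I(x)|=c_2 x$ with $c_2$ universal, and that bound fails. Your fix is to subtract a well $\zeta^i$ and use $\int_0^x|\beta_1-\zeta^i|\le\sqrt{xE}$. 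But you then need $\sqrt{xE}\le c\theta x$, i.e.\ $E\le c\theta^2 x$, and for the smallest strip ($x\sim\theta^{k-1}$) this reads $E\le c\theta^{k+1}$. The available upper bound is $E\le c_1 k\sigma\le c_1 k\theta^{k}$, and no universal choice of $c_1>0$ makes $c_1 k\theta^k\le c\theta^{k+1}$ for all $k\ge 32$ and small $\theta$; the claimed ``comparability'' is simply false. In addition, you have not explained what happens when Lemma~\ref{lemma: alternative} applied to $\beta_1(\cdot,y^i(x))$ lands in case (a) (no $\zeta^i$ exists), nor when $\zeta^1\neq\zeta^2$: in the latter case the term $(\zeta^2-\zeta^1)x$ enters the identity from Lemma~\ref{lemma: rewrite} with size $\sim x\gg\theta|I(x)|$ and must be accounted for.

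The paper resolves all three issues differently. First, it only iterates over $i\le K=\lfloor k/8\rfloor$, so the smallest relevant scale is $\theta^{K}$ rather than $\theta^{k}$; combined with the choice of $\theta_0$ so that $k\theta^{k/4}\le\theta^2$, this makes every size comparison go through. Second, instead of selecting wells separately on the two horizontal slices and the vertical slice, the paper concatenates these three segments into a single path $\alpha$ and applies Remark~\ref{rem: alternative} once: either the path carries energy $\gtrsim\sigma$ (which, after Fubini, yields the slice or strip alternative directly), or a \emph{single} $\xi\in K$ works on all three segments. Third, and this is the point you are missing, in the one-well case the paper does \emph{not} try to bound the slice energy at $x_i$. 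It splits $\int_0^{x_i}(\beta_1(\cdot,y_2)-\beta_1(\cdot,y_1))$ into $\int_0^{x_{i+1}}+\int_{x_{i+1}}^{x_i}$; the first piece is controlled by the trivial bound $|\int_0^{x_{i+1}}\beta_1|\le x_{i+1}+\sqrt{x_{i+1}E}\le 2\theta^{i+1}=\tfrac{\theta}{4}|I|$, which works precisely because $x_{i+1}\sim\theta x_i$ is one $\theta$-scale smaller. The remaining piece $\int_{x_{i+1}}^{x_i}$ is then forced to be $\gtrsim\theta|I|$, and since $\beta_1$ is close to $\xi_1$ on $(x_{i+1},x_i)\times\{y_1,y_2\}$ this yields a lower bound on the \emph{strip} energy $E(\beta;(x_{i+1},x_i)\times(0,1))\gtrsim\theta^2|I|\sim\theta^{i+2}\ge\sigma$. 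Summing these disjoint strip contributions over $i\le K$ gives $E\gtrsim K\sigma\gtrsim k\sigma$.
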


\begin{proof}
\textit{Step 1: Preparation and selection of vertical slices.}  
Let $c_1 := \min\{\frac{c_v}{128}, \frac{1}{4 \cdot 24^2} \}$ and $c_2 = 8$, where $c_v$ is the universal constant from Lemma \ref{lem: lb vortex general}. Additionally, fix $0 < \theta_0 \leq \frac1{16}$ such that it holds for all $k\geq 32$ and $\theta \leq \theta_0$ that $k \theta^{k/4} \leq \theta^2$.
Let $\beta \in \mathcal{A}_{\sigma,\theta,\eps}^{(\epsilon)}$. We may assume that $E_{\sigma,\theta,\eps}^{(\epsilon)}(\beta) \leq c_1 \min\left\{ \theta^2, k \sigma, \theta \frac{\sigma^3}{\eps^2} \right\}$.
Next, find for $i=1,\dots,k$ points $x_i \in (\frac{3\theta^i}4, \theta^i)$ such that
\begin{equation}\label{eq: slice selection}
E_{\sigma,\theta,\eps}^{(\epsilon)}(\beta; \{x_i\} \times (0,1)) \leq 4\theta^{-i} E_{\sigma,\theta,\eps}^{(\epsilon)}\left(\beta; (\frac{3\theta^i}4, \theta^i) \times (0,1) \right).
\end{equation}
We claim the following. \\

\underline{\textit{Claim: }} Let $K = \left\lfloor \frac{k}8 \right\rfloor$. For $i = 1, \dots, K-1$ it holds one of the following for a universal constant $c_3>0$ (not depending on $\beta$, $\theta$, $\sigma$, $\eps$)
\begin{enumerate}
    \item $E_{\sigma,\theta,\eps}^{(\epsilon)}(\beta) \geq c_3 \theta\frac{\sigma^3}{\eps^2}$;
    \item $E_{\sigma,\theta,\eps}^{(\epsilon)}(\beta;\{x_i\} \times (0,1)) \geq c_3 \theta^{-i} \sigma$;
    \item $E_{\sigma,\theta,\eps}^{(\epsilon)}(\beta;(x_{i+1},x_i) \times (0,1)) \geq c_3 \sigma$.
\end{enumerate}

\textit{Step 2: Conclusion using the claim.}
We first show how to conclude the proof using the claim above. Assume that the claim is true. If (1) is true for some $i=1,\dots, K$ then the assertion follows immediately. Hence, we will assume now that for each $i=1,\dots,K-1$ assertion (2) or (3)  from the claim is true. Then we estimate
\begin{align*}
    2 E_{\sigma, \eps}(\beta) &\geq  \sum_{i=1}^{K-1} \left( E_{\sigma,\theta,\eps}^{(\epsilon)}(\beta; (\frac{3\theta^i}4, \theta^i) \times (0,1)) + E_{\sigma,\theta,\eps}^{(\epsilon)}(\beta; (x_{i+1}, x_i) \times (0,1)) \right) \\
    &\geq \sum_{i=1}^{K-1} \left(  \frac{\theta^{i}}4 E_{\sigma,\theta,\eps}^{(\epsilon)}(\beta; \{x_i\} \times (0,1)) + E_{\sigma,\theta,\eps}^{(\epsilon)}(\beta; (x_{i+1},x_i) \times (0,1))\right) \\
    &\geq c_3 (K-1) \sigma \geq c_3\frac{K}2 \sigma \geq \frac{c_3}{32}k \sigma.
\end{align*}
This proves the assertion. \\

\textit{Step 3: Proof of the claim.} 
Exactly as in \eqref{eq: est curl average}, we note that using Lemma \ref{lem: lb vortex general} it holds for a universal constant $c_v > 0$
\begin{equation}\label{eq: est curl whole}
    \int_{(0,1)^2} |\operatorname{curl } \beta| \, d\mathcal{L}^2 \leq \frac{c_1}{c_v} \theta.
\end{equation}
From now on, we will fix $i=1, \dots, K$.

\textit{Step 3a: choice of representative intervals.}
Next, we find an interval $I = (y_1,y_2)$ with $|I| = y_2 - y_1 = c_2 \theta^i$ such that the following holds
\begin{enumerate}
    \item  $\int_{(0,1) \times |I|} |\operatorname{curl } \beta| \, d\mathcal{L}^2 \leq 32|I|  \int_{(0,1)^2} |\operatorname{curl } \beta| \, d\mathcal{L}^2$;
    \item $E_{\sigma,\theta,\eps}^{(\epsilon)}(\beta; \{x_i\} \times I) \leq 32 |I| E_{\sigma,\theta,\eps}^{(\epsilon)}(\beta; \{x_i\} \times (0,1))$;
    \item $E_{\sigma,\theta,\eps}^{(\epsilon)}(\beta; (x_{i+1},x_i) \times I) \leq 32 |I| E_{\sigma,\theta,\eps}^{(\epsilon)}(\beta; (x_{i+1},x_i) \times (0,1))$;
    \item $E_{\sigma,\theta,\eps}^{(\epsilon)}(\beta; (0,1) \times \{y_l\}) \leq 32 E_{\sigma,\theta,\eps}^{(\epsilon)}(\beta)$, for $l=1,2$;
    \item $E_{\sigma,\theta,\eps}^{(\epsilon)}(\beta; (x_{i+1},x_i) \times \{y_l\}) \leq 32 E_{\sigma,\theta,\eps}^{(\epsilon)}(\beta; (x_{i+1},x_i) \times (0,1))$, for $l=1,2$;
\end{enumerate}

\textit{Step 3b: Reduction to one well on vertical and horizontal slices.} We define the one-dimensional function $\alpha: (0,2(x_i - x_{i+1}) + |I|) \to \R^2$ as
\[
\alpha(t) = \begin{cases}
    \beta(x_{i+1} + t,y_1) &\text{ if } t \in (0,x_i - x_{i+1}) \\
    \beta(x_i,y_1 + t - x_i + x_{i+1}) &\text{ if } t \in (x_i - x_{i+1} ,x_i - x_{i+1} +|I|) \\
    \beta(x_i - t + (x_i - x_{i+1} +|I|), y_2) &\text{ if } t \in (x_i - x_{i+1} +|I|, 2(x_i - x_{i+1}) +|I|).
\end{cases}
\]
By Remark \ref{rem: alternative}, one of the following is true
\begin{enumerate}
    \item[(a)] $|\alpha'|((0,2(x_i - x_{i+1}) + |I|)) \geq \frac12$ if $\epsilon = 1$ or $\int_{(0,2(x_i - x_{i+1}) + |I|)} W(\alpha) + \sigma^2 |\alpha'|^2 \, d\mathcal{L}^1 \geq \frac{\sigma}{16}$ if $\epsilon = 2$;
    \item[(b)] there exists $\xi \in K$ such that it holds for almost all $t \in (0,2(x_i - x_{i+1}) + |I|)$ that $|\alpha(t) - \xi| \leq 3\sqrt{2 W(\alpha(t))}$.
\end{enumerate}
If (a) is true, then it follows using (2) and (4) that 
\begin{align*}
\frac{\sigma}{16} &\leq E_{\sigma,\theta,\eps}^{(\epsilon)}(\beta; (x_{i+1},x_i) \times \{y_1\}) + E_{\sigma,\theta,\eps}^{(\epsilon)}(\beta; \{x_i\} \times I) + E_{\sigma,\theta,\eps}^{(\epsilon)}(\beta; (x_{i+1},x_i) \times \{y_2\}) \\
&\leq 64 E_{\sigma,\theta,\eps}^{(\epsilon)}(\beta; (x_{i+1},x_i) \times (0,1)) +  32 |I| E_{\sigma,\theta,\eps}^{(\epsilon)}(\beta; \{x_i\} \times (0,1)).
\end{align*}
Since $|I| = c_2 \theta^i$ this implies that (2) or (3)  from the claim are true for $0< c_3 \leq \frac{1}{2048 c_2}$.
Consequently, we will from now on assume that (b) is true, i.e., there exists $\xi \in K$ such that it holds almost everywhere on $(x_{i+1},x_i) \times \{y_1,y_2\} \cup \{x_i\} \times I$ that
\begin{equation}\label{eq: one well}
|\beta - \xi| \leq 3\sqrt{2W(\beta)}.
\end{equation}

\textit{Step 3c: Conclusion if (b) holds.}
By Lemma \ref{lemma: rewrite}, it holds
\begin{align}
   & \int_{0}^{x_i} \beta_1(s,y_2) - \beta_1(s,y_1) \, ds 
   = \int_{0}^{x_i} \int_{y_1}^{y_2} \operatorname{curl }\beta(s,t) \, ds dt + \int_{y_1}^{y_2} \beta_2(x_i,t) - (1-2\theta)\, dt,
\end{align}
which then implies that
\begin{align}\label{eq: diff beta1}
&\int_{x_{i+1}}^{x_i} \beta_1(s,y_2) - \beta_1(s,y_1) \, ds  \\
   = &\int_{0}^{x_i} \int_{y_1}^{y_2} \operatorname{curl }\beta(s,t) \, ds dt + \int_{y_1}^{y_2} \beta_2(x_i,t)- (1-2\theta)\, dt - \int_{0}^{x_{i+1}} \beta_1(s,y_2) - \beta_1(s,y_1) \, ds.
\end{align}
Next, we will estimate the three terms on the right-hand side. First, note that by (1), \eqref{eq: est curl whole} and $c_1 \leq \frac{c_v}{128}$
\[
\left|\int_{0}^{x_i} \int_{y_1}^{y_2} \operatorname{curl }\beta(s,t) \, ds dt \right| \leq 32|I| \int_{(0,1)^2} |\operatorname{curl } \beta| \, d\mathcal{L}^2 \leq 32\frac{c_1}{c_v} |I| \theta \leq \frac{\theta}4 |I|.
\]
Next, we find invoking \eqref{eq: one well}, Lemma \ref{lemma: est W}, (2) and \eqref{eq: slice selection}
\begin{align}
    \left|\int_{y_1}^{y_2} \beta_2(x_i,t) - (1-2\theta)\, dt \right| &\geq \left| \int_{y_1}^{y_2} \xi_2 - (1-2\theta) \, dt \right| - \int_{y_1}^{y_2} |\beta_2(x_i,t) - \xi_2| \, dt \\
    &\geq 2\theta |I| - 3\sqrt{2} |I|^{1/2} \left( \int_{y_1}^{y_2} W(\beta(x_i,t)) \, dt \right)^{1/2} \\
    &\geq 2\theta |I| - 24  |I| \left(E_{\sigma,\theta,\eps}^{(\epsilon)}(\beta; \{x_i\} \times (0,1))\right)^{1/2} \\
    &\geq 2\theta |I| - 24 |I| \left( 4 c_1 k \sigma \right)^{1/2} \geq \theta |I|,
\end{align}
where we used in the last inequality that $k\sigma \leq k \theta^{k-i} = \theta^2 k \theta^{k-i-2} \leq \theta^2 k \theta^{k/4} \leq \theta^2$ and $0< c_1 \leq \frac{1}{4 \cdot 24^2}$.
Eventually, we estimate for $l=1,2$ invoking (4) and Lemma \ref{lemma: est W}
\begin{align}
    \left|\int_0^{x_{i+1}} \beta_1(s,y_l) \, ds\right| &\leq x_{i+1} + \int_0^{x_{i+1}} \operatorname{dist}(\beta(s,y_l),K) \, ds \\
    &\leq x_{i+1} + x_{i+1}^{1/2} \left(E_{\sigma,\theta,\eps}^{(\epsilon)}(\beta; (0,1) \times \{y_l\})\right)^{1/2} \\
    &\leq \theta^{i+1} + \sqrt{32}  \left(\theta^{i+1}E_{\sigma,\theta,\eps}^{(\epsilon)}(\beta)\right)^{1/2} \\
    &\leq \theta^{i+1} + 8 \left(c_1 k \theta^{i+1} \sigma \right)^{1/2} \leq 2 \theta^{i+1} = \frac{\theta}4 |I|, 
\end{align}
where we used that $|I| = c_2 \theta^i$, $c_2 = 8$, $0< c_1 \leq \frac{1}{64}$ and that $k \theta^{i+1} \sigma \leq \theta^{2i + 2} k\theta^{k-i-1} \leq \theta^{2i+2} k\theta^{k/4} \leq \theta^{2i+2}$.
Combining the three estimates, we obtain from \eqref{eq: diff beta1} that
\begin{equation} \label{eq: difference beta1}
\left| \int_{x_{i+1}}^{x_i} \beta_1(s,y_2) - \beta_1(s,y_1) \, ds \right| \geq \frac{\theta}4 |I|.
\end{equation}
We then estimate using \eqref{eq: one well}, Lemma \ref{lemma: est W} and (5)
\begin{align} 
    &\left| \int_{x_{i+1}}^{x_i} \beta_1(s,y_2) - \beta_1(s,y_1) \, ds \right| \\\leq &\left| \int_{x_{i+1}}^{x_i} \beta_1(s,y_2) - \xi_1 \, ds \right| + \left| \int_{x_{i+1}}^{x_i} \beta_1(s,y_1) - \xi_1 \, ds \right| \\
    \leq &3 \sqrt{2} (x_{i} - x_{i+1})^{1/2} \left( \left(E_{\sigma,\theta,\eps}^{(\epsilon)}(\beta; (x_{i+1},x_{i}) \times \{y_1\})\right)^{1/2} + \left(E_{\sigma,\theta,\eps}^{(\epsilon)}(\beta; (x_{i+1},x_{i}) \times \{y_2\})\right)^{1/2} \right) \\
    \leq &3\sqrt{128}(x_{i} - x_{i+1})^{1/2} \left(E_{\sigma,\theta,\eps}^{(\epsilon)}(\beta; (x_{i+1},x_i) \times (0,1))\right)^{1/2},
\end{align}
from which it follows using $x_i - x_{i+1} \leq \theta^i = \frac{1}{c_2} |I|$ and \eqref{eq: difference beta1} that
\[
E_{\sigma,\theta,\eps}^{(\epsilon)}(\beta; (x_{i+1},x_i) \times (0,1)) \geq \frac{c_2}{16 \cdot 9  \cdot 128} \theta^2 |I| = \frac{c_2^2}{16 \cdot 9 \cdot 128} \theta^{i+2} \geq \frac{2c_2^2}{16 \cdot 9 \cdot 128}  \sigma.
\]
\end{proof}

Eventually, we show how to prove the logarithmic lower bound for $E_{\sigma,\theta,\eps}^{(\epsilon)}$ that stems from the existence of vortices using Proposition \ref{prop: ball constr}.

\begin{proposition}\label{prop: log}
There exists $\theta_0 > 0$ and $c_{LB}^{(4)}>0$ such that it holds for all $\eps, \sigma > 0$ with $0<\theta \leq \theta_0$ and $0 < \frac{1}{\sqrt{2}\pi}\eps  \leq \sigma \leq \frac{\theta}{4}$ it holds 
\[
E_{\sigma,\theta,\eps}^{(\epsilon)}(\beta) \geq c_{LB}^{(4)} \min\{ \theta^2,  \sigma \theta |\log \theta|\}.
\]
\end{proposition}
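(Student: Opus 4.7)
My plan is to argue by contradiction. Assume that
\[
E := E^{(\epsilon)}_{\sigma,\theta,\eps}(\beta) \leq c_1 \min\{\theta^2,\,\sigma\theta|\log\theta|\}
\]
for a small universal $c_1>0$ to be fixed at the end, and derive $E \gtrsim \sigma\theta|\log\theta|$ from the ball construction of Proposition \ref{prop: ball constr}. In the regime $\sigma|\log\theta|\geq\theta$ one has $\min=\theta^2$ and the assertion follows from Proposition \ref{prop: sigma simple}, so I restrict to $\sigma|\log\theta|\leq\theta$. Lemma \ref{lem: lb vortex general} together with $\sqrt{2}\pi\eps\leq\sigma$ yields the vortex-count bound $n\leq(2\pi^2 c_1/c_v)\,\theta|\log\theta|/\sigma$, so in particular $\eps n\theta^{-1/2}\to 0$ as $\theta\to 0$ uniformly in $\sigma,\eps$.

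The core step is to exhibit a rectangle $R^* := (0,x_*)\times(y_1,y_2)$ with $x_*\in(1/4,1/2)$, $y_1,y_2\in(1/4,3/4)$, $y_2-y_1\geq 1/32$, in which $|\mu(R^*)|\geq\theta/32$. By Chebyshev on horizontal slices, Remark \ref{rem: alternative}, and a pigeonhole, choose $y_1,y_2$ so that $\beta_1(\cdot,y_j)$ lies near the \emph{same} well $\xi\in\{\pm 1\}$ on $(0,1)$ (WLOG $\xi=+1$, the other case is symmetric) and the horizontal slice energies are $\leq CE$; the large-TV branch would give $E\gtrsim\sigma\gg\sigma\theta|\log\theta|$, a contradiction. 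A further Chebyshev selection yields $x_*\in(1/4,1/2)$ with vertical slice energy $\leq 4E$. Applying Remark \ref{rem: alternative} to $\beta_2(x_*,\cdot)$ on $(y_1,y_2)$ and excluding the large-oscillation branch by the same size argument produces $\beta_2(x_*,\cdot)\approx\xi_*\in\{\pm1\}$. Lemma \ref{lemma: rewrite} then reads
\[
\int_{y_1}^{y_2}\beta_2(x_*,t)\,dt - (1-2\theta)(y_2-y_1) = \mu(R^*) + \int_{0}^{x_*}\!\!\bigl[\beta_1(s,y_2)-\beta_1(s,y_1)\bigr]\,ds.
\]
Cauchy--Schwarz, Lemma \ref{lemma: est W} (applied with the pointwise well-approximation of Remark \ref{rem: alternative}), and $\xi(y_1)=\xi(y_2)$ bound both $\int_{y_1}^{y_2}|\beta_2(x_*,t)-\xi_*|\,dt$ and the $\beta_1$-boundary integral by a constant times $\sqrt{E}\lesssim\sqrt{c_1\sigma\theta|\log\theta|}\lesssim\sqrt{c_1}\,\theta$ in the targeted regime (the crucial point being that $x_*\sim 1$ and $y_2-y_1\sim 1$, so the Cauchy--Schwarz estimates do not pick up an additional $\sqrt{|\log\theta|}$-factor). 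The size bound $|\mu(\Omega)|\leq n\sigma = O(\theta|\log\theta|)\ll 1$ rules out $\xi_*=-1$, which would force $|\mu(R^*)|\gtrsim 1$. Hence $\xi_*=+1$ and $|\mu(R^*)|\geq 2\theta(y_2-y_1)-O(\sqrt{c_1}\theta)\geq\theta/32$ for $c_1$ small.

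Finally, I would apply Proposition \ref{prop: ball constr} to the $\eps$-balls around the vortices. Because vortices of $\mathcal{M}_{\sigma,\eps}$ can sit within distance $\eps$ of $\{x=0\}$, a direct application on $A=\Omega$ with $A'$ reaching the left boundary gives $d=0$. I would handle this by extending $\beta$ to $\tilde\beta$ on $\tilde\Omega:=(-1/4,1)\times(0,1)$, setting $\tilde\beta(x,y):=\beta(0,y)$ for $x<0$; this extension is continuous across $\{x=0\}$ by the boundary datum, and its extra curl $-\partial_y\beta_1(0,\cdot)$ together with the bulk energy $W(\beta(0,\cdot))$ on the added strip are controlled by a trace estimate in terms of $E$ and $\theta^2$, absorbable into a small constant. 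With $A:=\tilde\Omega$, $A':=[0,x_*]\times[y_1,y_2]$, $d\geq 1/4$, admissibility of $T:=|\log\theta|/2$ becomes $\eps n e^T\leq(2\pi^2c_1/c_v)\theta^{1/2}|\log\theta|\leq d/2$, which holds for $\theta\leq\theta_0$ small. Every vortex of $R^*\subseteq A'$ is captured by the balls intersecting $A'$ at time $T$, so Proposition \ref{prop: ball constr} yields
\[
E \;\geq\; c_{bc}\,\sigma\,\tfrac{|\log\theta|}{2}\cdot\tfrac{\theta}{32} \;=\; \tfrac{c_{bc}}{64}\,\sigma\theta|\log\theta|,
\]
contradicting the hypothesis for $c_1<c_{bc}/64$.

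The principal obstacle is the careful treatment of vortices near the left boundary: the bare ball construction requires $\operatorname{dist}(A',\partial A)>0$, which would otherwise exclude the very vortices that generate the boundary-mismatch curl. The sketched constant-in-$x$ extension (or, alternatively, a half-ball variant of the ball construction in the spirit of \cite{Je99,Sa98}) provides a clean workaround. A secondary delicate point is securing the pigeonhole $\xi(y_1)=\xi(y_2)$: without this equality, the $\beta_1$-boundary integral could contribute an uncontrolled $O(x_*)=O(1)$ term that would overwhelm the $\theta$-sized curl estimate.
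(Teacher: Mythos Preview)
Your overall strategy---force a macroscopic curl by boundary incompatibility, then harvest a logarithm via the ball construction---is the right one, and it matches the paper. However, the implementation has a genuine gap that cannot be repaired without changing the scale of the construction.

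\textbf{The order-one rectangle is too large.} You work with $x_*\in(1/4,1/2)$, $y_2-y_1\gtrsim 1$, and extend to a strip of width $1/4$. Any reasonable extension (yours, or the cleaner constant extension $(\xi_1,1-2\theta)$) carries $\beta_2=1-2\theta$ on the added strip, so the bulk energy picks up $\int_{\text{strip}} (1-(1-2\theta)^2)^2 \sim \theta^2$. The ball-construction inequality then reads
\[
E \;+\; C\theta^2 \;+\; (\text{jump terms}) \;\geq\; c\,\sigma\theta|\log\theta|.
\]
But you have deliberately restricted to $\sigma|\log\theta|\leq\theta$, i.e.\ $\sigma\theta|\log\theta|\leq\theta^2$, so the $\theta^2$ error swallows the right-hand side and nothing follows for $E$. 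This is not a constant-chasing issue: the strip bulk term is genuinely larger than the target in precisely the regime you kept. (Your reduction of the complementary regime $\sigma|\log\theta|\geq\theta$ to Proposition~\ref{prop: sigma simple} is also incorrect---that proposition only yields $\gtrsim\min\{\theta^2,\sigma,\theta\sigma^3/\eps^2\}\sim\theta\sigma$ here, not $\theta^2$---but this is a secondary problem.)

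\textbf{The extension also breaks the hypothesis of Proposition~\ref{prop: ball constr}.} Setting $\tilde\beta(x,y)=\beta(0,y)$ for $x<0$ produces $\operatorname{curl}\tilde\beta=-\partial_y\beta_1(0,y)$ on the strip, which is neither zero nor a sum of mollified Diracs; the annulus estimate underlying the ball construction then fails. Moreover, $\partial_y\beta_1(0,\cdot)$ is not controlled by any trace theorem (the $BV$ trace is merely $L^1$, the $H^1$ trace merely $H^{1/2}$). The paper avoids this by extending by the \emph{constant} $(\xi_1,1-2\theta)$, which keeps $\operatorname{curl}\tilde\beta$ exactly in vortex form and only creates a jump in $\beta_1$ across $\{x=0\}$; that jump is bounded by routing through $\beta_1(x,\cdot)$ and the total variation on the chosen strip.

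\textbf{How the paper closes the gap.} The fix is to work at the scale $\ell:=\min\{\tfrac{\sigma}{\theta}|\log\theta|,\tfrac14\}$: choose $x\in(\ell/2,\ell)$ and an interval $I$ of length $\ell$ by averaging, extend only on a strip of width $\tfrac{\sigma}{\theta}$, and take $A'=(0,x)\times I$ with a $\tfrac{\ell}{|\log\theta|}$-fattening for $A$. The strip bulk error becomes $\sim\theta^2\cdot\tfrac{\sigma}{\theta}\cdot\ell=\sigma\theta\ell$, which is $\ll\sigma\theta|\log\theta|\ell$ (the ball-construction lower bound, since the captured curl is now $\sim\theta\ell$) because $|\log\theta|$ is large. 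Dividing through by $\ell$ gives $E\gtrsim\sigma\theta|\log\theta|$, and the two cases $\ell=\tfrac{\sigma}{\theta}|\log\theta|$ versus $\ell=\tfrac14$ handle both regimes uniformly without invoking Proposition~\ref{prop: sigma simple}.
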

\begin{proof} 
\textit{Step 1: Preparation.}
Let $\theta_0 \in (0,1/2]$ be such that $|\log \theta_0| \geq \max\{\frac{128}{c_{bc}}, 2 \}$, where $c_{bc} > 0$ is the universal constant from Proposition \ref{prop: ball constr}, and such that $\theta_0 |\log \theta_0| \leq \frac{1}{2\sqrt{2}\pi}$.
Let $\beta \in \mathcal{A}_{\sigma,\theta,\eps}^{(\epsilon)}$ and assume that $E_{\sigma,\theta,\eps}^{(\epsilon)}(\beta) \leq c_1 \min\{ \theta^2, \sigma \theta |\log \theta| \}$, where we define the constant 
\[
c_1 := \min \left\{ \frac{c_v}{\sqrt{2} \cdot \pi \cdot 256 \cdot 64 }, \frac{c_v}{4 \cdot 32^2 \cdot \pi^2}, \frac{1}{32\cdot 320}, \frac{c_{bc}}{512 \cdot 32}, \frac{c_{bc}^2}{432^2 \cdot 4 \cdot 72^2 \cdot 32^2}\right\}
\]
and $c_v, c_{bc}>0$ are the universal constants from Lemma \ref{lem: lb vortex general} and Proposition \ref{prop: ball constr}.

By Lemma \ref{lem: lb vortex general} it follows that $c_v \frac{\sigma^4}{\eps^2} \#\text{ vortices in } (0,1)^2 \leq c_1 \sigma \theta |\log \theta|$ and since $\eps \leq \sqrt{2}\pi \sigma$ we obtain
\[
\#\text{ vortices in } (0,1)^2 \leq 2\pi^2 \frac{c_1}{c_v} \frac{\theta}{\sigma}  |\log \theta| \text{ and } \int_{(0,1)^2} |\operatorname{curl } \beta| \, d\mathcal{L}^2 \leq 2\pi^2 \frac{c_1}{c_v} \theta |\log \theta|.
\]
Let us define $\ell = \min\left\{ \frac{\sigma}{\theta} |\log \theta|, \frac14 \right\}$.
Next, find $x \in (\ell / 2, \ell)$ such that it holds
\begin{equation}\label{eq: choice x}
E_{\sigma,\theta,\eps}^{(\epsilon)}(\beta; \{x\} \times (0,1)) \leq \frac{8}{\ell} E_{\sigma,\theta,\eps}^{(\epsilon)}(\beta) \text{ and } \int_{(x,x+ \frac{\ell}{|\log \theta|}) \times (0,1)} |\operatorname{curl } \beta| \, d\mathcal{L}^2 \leq 16 \pi^2 \frac{c_1}{c_v} \theta  \ell.    
\end{equation}
Then find an interval $I = (y_1,y_2) \subseteq (0,1)$ with $|I| = \ell$ such that $\tilde{I} = \left(y_1 - \frac{\ell}{|\log \theta|},y_2 + \frac{\ell}{|\log \theta|}\right) \subseteq (0,1)$, i.e.~$|\tilde{I}| =  \ell \left( 1 + \frac{2}{|\log \theta|}\right) \leq 2\ell$, it holds that
\begin{enumerate}
    \item $E_{\sigma,\theta,\eps}^{(\epsilon)}(\beta; \{x\} \times I) \leq 32 \ell E_{\sigma,\theta,\eps}^{(\epsilon)}(\beta;\{x\} \times (0,1))$,
    \item $E_{\sigma,\theta,\eps}^{(\epsilon)}(\beta; \{x\} \times \tilde{I}) \leq 64 \ell  E_{\sigma,\theta,\eps}^{(\epsilon)}(\beta;\{x\} \times (0,1))$,
    \item $E_{\sigma,\theta,\eps}^{(\epsilon)}(\beta; (0,1) \times I) \leq 32 \ell E_{\sigma,\theta,\eps}^{(\epsilon)}(\beta)$,
    \item $E_{\sigma,\theta,\eps}^{(\epsilon)}(\beta; (0,1) \times \tilde{I}) \leq 64 \ell E_{\sigma,\theta,\eps}^{(\epsilon)}(\beta)$,
    \item $E_{\sigma,\theta,\eps}^{(\epsilon)}(\beta; (0,1) \times \{y_i\}) \leq 32 E_{\sigma,\theta,\eps}^{(\epsilon)}(\beta)$,
    \item $\#\text{ vortices in } (0,1) \times \tilde{I} \leq 32\cdot 32 \pi^2 \ell \frac{c_1}{c_v} \frac{\theta}{\sigma} |\log \theta|$, 
    \item $\int_{(0,1) \times (\tilde{I} \setminus I)} |\operatorname{curl } \beta| \, d\mathcal{L}^2 \leq 32 \cdot 32 \pi^2\cdot \ell \frac{c_1}{c_v} \theta$,
    \item $\int_{(x,x+\frac{\ell}{|\log \theta|}) \times I} |\operatorname{curl } \beta| \, d\mathcal{L}^2 \leq 32 \cdot 16 \pi^2 \ell^2 \frac{c_1}{c_v} \theta \leq 32 \cdot 4\pi^2 \ell \frac{c_1}{c_v} \theta$.
\end{enumerate}
By (1), (5) and \eqref{eq: choice x} we observe for $\epsilon =1 $ that 
\[
\sum_{i=1}^2 \sigma |\partial_1 \beta(\cdot, y_i)|((0,x]) + \sigma |\partial_2 \beta(x,\cdot)|(I) \leq 320 E_{\sigma,\theta,\eps}^{(\epsilon)}(\beta) \leq 320 c_1 \sigma \theta |\log \theta| \leq 320 c_1 \sigma
\]
and for $\epsilon =2$ similarly that
\[
\sum_{i=1}^2 \int_{0}^x W(\beta(s,y_i)) + \sigma^2 |\partial_1 \beta(s,y_i)|^2 \, ds + \int_{I} W(\beta(x,t)) + \sigma^2 |\partial_1 \beta(x,t)|^2 \, dt \leq 320 c_1 \sigma.
\]
Since $c_1 \leq \frac{1}{32\cdot 320}$ it follows from Remark \ref{rem: alternative} that there exists $\xi \in K$ such that it holds for all $s \in (0,x)$ and $t \in I$
\begin{equation}\label{eq: one well 1}
|\beta(s,y_i) - \xi| \leq 3\sqrt{2} \operatorname{dist}(\beta(s,y_i),K) \text{ and } |\beta(x,t) - \xi| \leq 3\sqrt{2} \operatorname{dist}(\beta(x,t),K).
\end{equation}
Using Lemma \ref{lemma: rewrite} we write
\begin{align}
    \int_I \left(\beta_2(x,t) - (1-2\theta)\right) \, dt = \int_{y_1}^{y_2} \operatorname{curl} \beta(s,t) \, ds dt + \int_{0}^x (\beta_1(s,y_2) - \xi_1) - (\beta_1(s,y_1) - \xi_1) \, ds. \label{eq: form in beta2}
\end{align}
Then note that by \eqref{eq: one well 1}, Lemma \ref{lemma: est W} and (5) it holds for $i=1,2$
\begin{align}
\left| \int_{0}^x (\beta_1(s,y_i) - \xi_1) \, ds \right| &\leq 3\sqrt{2} \int_0^x \operatorname{dist}(\beta(s,y_i),K) \, ds \\ &\leq x^{1/2} E_{\sigma,\theta,\eps}^{(\epsilon)}(\beta; (0,1) \times \{y_i\})^{1/2} \\
&\leq x^{1/2} \left( 32 c_1 \min\{ \theta^2, \sigma \theta |\log \theta|\}\right)^{1/2} \leq \frac14 \theta \ell = \frac{\theta}4 (y_2-y_1), \label{eq: est line x}
\end{align}
where we used that $x  \leq \ell = (y_2-y_1)$, $\min\{ \theta^2, \sigma \theta |\log \theta|\} \leq 4 \theta^2 \ell$ and $c_1 \leq \frac1{4\cdot 32 \cdot 16}$.

Next, we distinguish two cases depending on the size of $ \int_{y_1}^{y_2} \operatorname{curl} \beta(s,t) \, ds dt$.  \\
\textit{Step 2: The case of a small $\operatorname{curl}$.}
Assume that 
\[ 
\left| \int_{y_1}^{y_2} \operatorname{curl} \beta(s,t) \, ds dt\right| \leq \theta (y_2 - y_1).
\]
Then it follows from (1) , \eqref{eq: one well 1}, \eqref{eq: form in beta2}, \eqref{eq: est line x} and Lemma \ref{lemma: est W} that
\begin{align}
2\theta (y_2-y_1) &\leq \left| (1-\xi_2-2\theta)(y_2-y_1) \right| \\
&\leq \left| \int_I \beta_2(x,t) - \xi_2 \, dt \right|  + \left| \int_{I} \beta_2(x,t) - (1-2\theta) \, dt \right| \\
&\leq 3\sqrt{2} (y_2 - y_1)^{1/2} \left( \int_I \operatorname{dist}(\beta(x,t),K)^2 \, dt \right)^{1/2}+ \frac{3}2 \theta (y_2-y_1) \\
&\leq 3\sqrt{2} (y_2-y_1)^{1/2} E_{\sigma,\theta,\eps}^{(\epsilon)}(\beta; \{x\} \times I)^{1/2} + \frac32 \theta (y_2-y_1) \\
&\leq 24 (y_2-y_1) E_{\sigma,\theta,\eps}^{(\epsilon)}(\beta; \{x\} \times (0,1))^{1/2} + \frac32 \theta (y_2-y_1),
\end{align}
which in turn implies by \eqref{eq: choice x}
\[
E_{\sigma,\theta,\eps}^{(\epsilon)}(\beta) \geq \frac{\ell}{8}  E_{\sigma,\theta,\eps}^{(\epsilon)}(\beta;\{x\} \times (0,1)) \geq \frac{1}{8 \cdot 48^2} \ell \theta^2 =  \frac{1}{4 \cdot 8 \cdot 48^2} \min\{ \theta^2,  \sigma \theta |\log \theta|\} .
\]
\textit{Step 2: The case of a large $\operatorname{curl}$.} Now, we assume that 
\begin{equation} \label{eq: curl large}
\left| \int_{(0,x) \times I} \operatorname{curl }\beta \, d\mathcal{L}^2 \right| \geq \theta (y_2-y_1).
\end{equation}
For technical reasons we will provide different proofs for $\epsilon=1$ and $\epsilon =2$.\\

\textit{Step 2a: The case $\epsilon = 1$.}
Define $A' := (0,x) \times I \subseteq A := (-\frac{\sigma}{\theta }, x + \frac{\sigma}{\theta }) \times (y_1 - \frac{\ell}{|\log \theta|} + \eps, y_2 + \frac{\ell}{|\log \theta|} - \eps)$ and $d:= \operatorname{dist}(A',\partial A) = \frac{\ell}{|\log \theta|} - \eps$. 
Note that it holds $\frac{\ell}{|\log \theta|} - \eps \geq \frac{\sigma}{\theta} - \eps \geq \frac{\sigma}{2\theta} + \sqrt{2}\pi \sigma- \eps \geq \frac{\sigma}{2\theta}$ since we assume $\theta \leq \theta_0 \leq \frac{1}{2\sqrt{2}\pi}$ and $\eps \leq \sqrt{2}\pi \sigma$.

Next, extend $\beta$ by $(\xi_1,1-2\theta)$ to $(-\frac{\sigma}{\theta},0) \times \tilde{I}$ and let $x_1, \dots, x_n \in (0,1) \times \tilde{I}$ such that it holds $\operatorname{curl } \beta = \sum_{k=1}^n \gamma_k \sigma \delta_{x_k} * \rho_{\eps}$ with $\gamma_k \in \{\pm 1\}$ in $A$. 
Using (6) it holds that $n \leq 32^2 \pi^2 \ell \frac{c_1}{c_v} \frac{\theta}{\sigma} |\log \theta| \leq 32^2 \pi^2 \frac{c_1}{c_v} |\log \theta|^2$.
Eventually, set $T := \log\left( \frac{d}{\eps n} \right)$ and note that it holds since $\eps \leq \sqrt{2} \pi \sigma$ and $c_1 \leq \frac{c_v}{\sqrt{2}\cdot \pi \cdot 256 \cdot 64} $ that
\[
\log\left( \frac{d}{\eps n} \right) \geq \log\left( \frac{\sigma}{\theta} \frac{c_v }{\eps 32^2 \cdot c_1   |\log \theta|^2} \right) \geq |\log \theta| - \log\left(  \frac{|\log \theta|^2}{16} \right)  \geq \frac12 |\log \theta|,
\]
where we used that $\log\left(  \frac{|\log \theta|^2}{16} \right) =2 \log \left(  \frac{|\log \theta|}{4} \right) \leq \frac12 |\log \theta|$.
Then it follows in the notation from Proposition \ref{prop: ball constr} that 
\begin{align}\label{eq: bc est lower}
    \int_A W(\beta) \, d\mathcal{L}^2 + \sigma |D\beta|(A) \geq \frac{c_{bc}}2 \sigma |\log \theta| \left|\operatorname{curl } \beta(\bigcup_{j \in J(T)} B_{r_j(T)}(p_j(T)))\right|.
\end{align}
On the other hand, we estimate using (4) and $|\log \theta| \geq |\log \theta_0| \geq \frac{128}{c_{bc}}$
\begin{align}
\int_A W(\beta) \, d\mathcal{L}^2 \leq \int_{(0,1) \times \tilde{I}} W(\beta) \, d\mathcal{L}^2 + 4 \theta^2 \frac{\sigma}{\theta} |\tilde{I}| &\leq 64 \ell E_{\sigma,\theta,\eps}^{(1)}(\beta) + 8 \sigma \theta \ell \\
&\leq 64 \ell E_{\sigma,\theta,\eps}^{(1)}(\beta) + \frac{c_{bc}}{16} \sigma \theta |\log \theta| \ell \label{eq: est bulk eps1}
\end{align}
and using (2), (4), \eqref{eq: choice x}, \eqref{eq: one well 1}, Lemma \ref{lemma: est W}, $c_1 \leq \min\{ \frac{c_{bc}}{512 \cdot 32}, \frac{c_{bc}^2}{512 \cdot 6^2 \cdot 32^2}\}$
\begin{align}
\sigma |D\beta|(A) &\leq \sigma |D\beta|((0,1) \times \tilde{I}) + \sigma \int_{\tilde{I}} |\beta_1(0,t) - \xi_1| \, dt \\
&\leq 64 \ell  E_{\sigma,\theta,\eps}^{(1)}(\beta) + \sigma \int_{\tilde{I}} |\beta_1(0,t) - \beta_1(x,t)| \, dt + \sigma \int_{\tilde{I}} |\beta_1(x,t) -\xi_1| \, dt \\
&\leq 64 \ell  E_{\sigma,\theta,\eps}^{(1)}(\beta) + \sigma |D\beta|((0,1) \times \tilde{I} ) + 3\sqrt{2} \sigma |\tilde{I}|^{1/2} \left( \int_{\tilde{I}} \operatorname{dist}(\beta(x,t),K)^2 \, dt \right)^{1/2} \\
&\leq 128 \ell E_{\sigma,\theta,\eps}^{(1)}(\beta) + 3\sqrt{2} \sigma |\tilde{I}|^{1/2} \left(E_{\sigma,\theta,\eps}^{(1)}(\beta; \{x\} \times \tilde{I}) \right)^{1/2} \\
&\leq 128 \ell c_1 \min\{ \theta^2,\sigma \theta |\log \theta|\} + 6\sqrt{512} \sigma  \ell^{1/2} \left(E_{\sigma,\theta,\eps}^{(1)}(\beta)\right)^{1/2} \\
&\leq 512 c_1 \theta^2 \ell^2   + 6 \sqrt{512 c_1} \sigma \theta \ell \\
&\leq \frac{c_{bc}}{32} \theta^2 \ell^2 + \frac{c_{bc}}{32} \sigma \theta \ell \leq \frac{c_{bc}}{16} \theta^2 \ell^2. \label{eq: est Dbeta}
\end{align}
In the last inequality, we used that $\theta \ell = \min\{ \theta/4, \sigma |\log \theta|\} \geq \sigma$. 
Moreover, by (7) and (8) we obtain since $c_1 \leq \frac{c_v}{4 \cdot 32^2 \cdot \pi^2}$
\begin{align}
    \left|\operatorname{curl } \beta(\bigcup_{B_{r_i(T)}(p_i(T)) \cap A' \neq \emptyset} B_{r_i(T)}(p_i(T)))\right| &\geq \left| \int_{A'} \operatorname{curl} \beta \, d\mathcal{L}^2 \right| - \int_{A \setminus A'} |\operatorname{curl} \beta| \, d\mathcal{L}^2 \\
    &\geq  \theta (y_2-y_1) - 2 \cdot 32^2 \pi^2\cdot \ell \frac{c_1}{c_v} \theta \geq \frac{\theta}2 \ell. \label{eq: curl difference}
\end{align}
Combining \eqref{eq: bc est lower}, \eqref{eq: est bulk eps1}, \eqref{eq: est Dbeta}, and \eqref{eq: curl difference} yields
\begin{align}
    \frac{c_{bc}}4 \sigma \theta |\log \theta| \ell &\leq \frac{c_{bc}}2 \sigma |\log \theta|  \left|\operatorname{curl } \beta(\bigcup_{B_{r_i(T)}(p_i(T)) \cap A' \neq \emptyset} B_{r_i(T)}(p_i(T)))\right| \\
    &\leq \int_A W(\beta) \, d\mathcal{L}^2 + \sigma |D\beta|(A) \\
    &\leq   64 \ell E_{\sigma,\theta,\eps}^{(1)}(\beta) + \frac{c_{bc}}{16} \sigma \theta  |\log \theta| \ell + \frac{c_{bc}}{16} \theta^2 \ell^2. \label{eq: put together}
\end{align}
If $\ell = \frac{\sigma}{\theta} |\log \theta|$, we find $\sigma \theta |\log \theta| \ell = \theta^2 \ell^2 = \sigma^2 |\log \theta|^2$ and therefore
\[
\frac{c_{bc}}{512} \sigma \theta |\log \theta| \leq E_{\sigma,\theta,\eps}^{(1)}(\beta).
\]
On the other hand, if $\ell = \frac14$ then $\frac{\sigma}{\theta} |\log \theta| \geq \frac14 = \ell$ which implies that $\sigma \theta |\log \theta| \ell \geq \theta^2 \ell^2$ and consequently we obtain again
\[
\frac{c_{bc}}{512} \sigma \theta |\log \theta| \leq E_{\sigma,\theta,\eps}^{(1)}(\beta).
\]
This finishes the proof in the case $\epsilon = 1$.\\
\textit{Step 3b: The case $\epsilon = 2$.}
We define $A$ and $A'$ as in the case $\epsilon = 1$.
Again, we extend $\beta$ as $(\xi_1,1-2\theta)$ to $(-\infty,0) \times \tilde{I}$.
Next, we estimate 
\[
2 \int_{(0,1) \times \tilde{I}} W(\beta) + \sigma^2 |D\beta|^2 \, d\mathcal{L}^2 \geq \int_{(0,1) \times \tilde{I}} W(\beta) + 2 \sigma |D (\Phi \circ \beta_1)| + 2 \sigma |D (\Phi \circ \beta_2)| \, d\mathcal{L}^2.
\]
Then we can estimate similarly to \eqref{eq: est Dbeta} using (2), (4), Lemma \ref{lem: properties phi}, \eqref{eq: choice x}, \eqref{eq: one well 1}, Lemma \ref{lemma: est W} and $c_1 \leq \min\{ \frac{c_{bc}}{512 \cdot 32}, \frac{c_{bc}^2}{432^2 \cdot 4 \cdot 72^2 \cdot 32^2}\}$ (we assume wlog that $0 < c_{bc} < 1$) 
\begin{align}
&2\sigma |D(\Phi \circ \beta_1)|(A) \\ &\leq 2\sigma \int_{(0,1) \times \tilde{I}} |D(\Phi \circ \beta_1)| \, d \mathcal{L}^2 + 2\sigma \int_{\tilde{I}} |\Phi(\beta_1(0,t)) - \Phi(\xi_1)| \, dt \\
&\leq   E_{\sigma,\theta,\eps}^{(2)}(\beta; (0,1) \times \tilde{I}) + 2 \sigma \int_{\tilde{I}} |\Phi(\beta_1(0,t)) - \Phi(\beta_1(x,t))| + |\Phi(\beta_1(x,t)) - \Phi(\xi_1)| \,dt \\
&\leq   E_{\sigma,\theta,\eps}^{(2)}(\beta; (0,1) \times \tilde{I}) + 2 \sigma \int_{(0,1) \times \tilde{I}} |D(\Phi \circ \beta_1)| \, d\mathcal{L}^2  + 2 \sigma\int_{\tilde{I}} |\Phi(\beta_1(x,t)) - \Phi(\xi_1)| \,dt \\
&\leq 2  E_{\sigma,\theta,\eps}^{(2)}(\beta; (0,1) \times \tilde{I}) + 8 \sigma \int_{\tilde{I}} |\beta_1(x,t) - \xi_1| + |\beta_1(x,t) - \xi_1|^3 \,dt \\
&\leq 128 \ell c_1 \min\{ \theta^2 , \sigma \theta |\log \theta| \} + 8 \sigma |\tilde{I}|^{1/2}  \left( \int_{\tilde{I}} |\beta_1(x,t) - \xi_1|^2 \, dt \right)^{1/2} + 8 \sigma |\tilde{I}|^{1/4} \left( \int_{\tilde{I}} |\beta_1(x,t) - \xi_1|^4 \, dt \right)^{3/4} \\
& \leq 512 c_1 \ell^2 \theta^2  + 48\sigma  \ell^{1/2} \left(E_{\sigma,\theta,\eps}^{(2)}(\beta; \{x\} \times \tilde{I})\right)^{1/2} + 432 \sigma \ell^{1/4} \left(E_{\sigma,\theta,\eps}^{(2)}(\beta; \{x\} \times \tilde{I})\right)^{3/4}  \\
&\leq \frac{c_{bc}}{32} \theta^2 \ell^2  + 48  \sigma \ell^{1/2} \left( 512 c_1 \min\{\theta^2,\sigma \theta |\log \theta|\} \right)^{1/2} + 432  \sigma \ell^{1/4} \left( 512 c_1 \min\{\theta^2,\sigma \theta |\log \theta|\} \right)^{3/4} \\
&\leq \frac{c_{bc}}{32} \theta^2 \ell^2 + 72\sqrt{2048 c_1} \sigma \theta \ell + 432  (c_1 2048)^{3/4} \sigma \theta^{3/2} \ell \\
&\leq \frac{c_{bc}}{32} \theta^2 \ell^2 + \frac{c_{bc}}{32} \sigma \theta \ell \leq \frac{c_{bc}}{16} \theta^2 \ell^2.
\end{align}

Then we can estimate similarly to the case $\epsilon = 1$
\[
\int_A W(\beta) + 2 \sigma |D(\Phi \circ \beta_1)| + 2 \sigma |D(\Phi \circ \beta_2)| \, d\mathcal{L}^2 \leq 64 \ell E_{\sigma,\theta,\eps}^{(1)}(\beta) + \frac{c_{bc}}{16} \sigma \theta |\log \theta| \ell + \frac{c_{bc}}{16} \theta^2 \ell^2.
\]
Using Proposition \ref{prop: ball constr} it follows that
\[
\int_{A} W(\beta) + 2 \sigma |D(\Phi \circ \beta_1)| + 2 \sigma |D(\Phi \circ \beta_2)| \,  d\mathcal{L}^2 \geq \frac{c_{bc}}2 \sigma |\log \theta|  \left|\operatorname{curl } \beta(\bigcup_{B_{r_i(T)}(p_i(T)) \cap A' \neq \emptyset} B_{r_i(T)}(p_i(T)))\right|.
\]
Then we argue exactly as in the case $\epsilon = 1$ in \eqref{eq: curl difference} and \eqref{eq: put together} to conclude.

\end{proof}

Eventually, we are able to provide the lower bound from Theorem \ref{thm: main} in the case $\epsilon = 1,2$.

\begin{proposition}\label{prop: main lower bound}
    There exists $c>0$ such that it holds for all $\theta \in (0,1/2]$, $\sigma \geq \sqrt{2} \pi \eps >0$ and $\epsilon \in \{1,2\}$ that
    \[
    \inf_{\mathcal{A}_{\theta,\sigma,\eps}^{(\epsilon)} } E_{\sigma,\theta,\eps}^{(\epsilon)} \geq c \min\left\{ \theta^2, \sigma \left( \frac{|\log \sigma|}{|\log \theta|} + 1 \right), \theta \frac{\sigma^3}{\eps^2} + \theta \sigma |\log \theta| \right\}.
    \]
\end{proposition}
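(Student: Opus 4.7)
I would prove the proposition by combining the four partial lower bounds established above. Write $A := \theta^2$, $B := \sigma(|\log\sigma|/|\log\theta|+1)$, $C := \theta\sigma^3/\eps^2$, $D := \theta\sigma|\log\theta|$, so the target is $T := \min\{A, B, C+D\}$, and let $\theta_\star$ denote the minimum of the (universal) small-$\theta$ thresholds provided by Propositions \ref{eq: sigma small} and \ref{prop: log}.

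The first step is to establish the baseline estimate $E^{(\epsilon)}_{\sigma,\theta,\eps}(\beta) \geq c \min\{A, B, C\}$ valid for every admissible $\beta$. For $\theta \geq \theta_\star$, Proposition \ref{prop: theta large} applied with threshold $\theta_\star$ gives the analogous estimate with $\sigma(|\log\sigma|+1)$ in place of $B$, and these two quantities are comparable up to a constant depending only on $\theta_\star$ since $|\log\theta| \in [\log 2, |\log\theta_\star|]$. For $\theta < \theta_\star$, set $k := \lfloor |\log\sigma|/|\log\theta|\rfloor$, so that $\sigma \in (\theta^{k+1},\theta^k]$: if $k \geq k_0 = 32$, Proposition \ref{eq: sigma small} produces $\min\{A, k\sigma, C\}$ with $k\sigma$ comparable to $B$; if $k < k_0$, Proposition \ref{prop: sigma simple} produces $\min\{A, \sigma, C\}$ and $\sigma \geq B/(k_0+1)$.

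Next I would upgrade $C$ to $C+D$, distinguishing three regimes. If $\sigma > \theta/4$, then the constraint $\sigma \geq \sqrt{2}\pi\eps$ forces $C \geq 2\pi^2 \theta\sigma > (\pi^2/2) A > A$; hence $T = \min\{A, B\} = \min\{A, B, C\}$ and the baseline suffices. If $\sigma \leq \theta/4$ and $\theta \geq \theta_\star$, then $D/C = |\log\theta|\eps^2/\sigma^2 \leq |\log\theta_\star|/(2\pi^2) =: \kappa - 1$, so $C + D \leq \kappa C$ and the elementary observation $\min\{A, B, \kappa C\} \leq \kappa \min\{A, B, C\}$ reduces the target to the baseline. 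In the remaining regime $\sigma \leq \theta/4$ and $\theta < \theta_\star$, Proposition \ref{prop: log} provides the additional bound $E \geq c \min\{A, D\}$; setting $M_1 := \min\{A, B, C\}$ and $M_2 := \min\{A, D\}$, one concludes by showing $\max(M_1, M_2) \geq T/2$ and combining the two lower bounds.

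The main obstacle is this last inequality. If $M_1 \in \{A, B\}$ it is immediate since $T \leq A$ and $T \leq B$, so the genuine issue is $M_1 = C$ (i.e.\ $C \leq A, B$). In that case I would argue case-by-case on which term attains $T$: if $T = C + D$ then the relation $C + D \leq A$ forces $M_2 = \min\{A, D\} = D$, yielding $M_1 + M_2 = C + D = T$ exactly; if $T \in \{A, B\}$ then $T \leq C + D$ gives $D \geq T - C$, and combined with $T \leq A$ (so that $T - C \leq A$) this gives $M_2 \geq T - C$ and $M_1 + M_2 \geq T$. In every sub-case $M_1 + M_2 \geq T$, hence $\max(M_1, M_2) \geq T/2$ and the two partial bounds combine to give $E \geq c' T$, completing the proof.
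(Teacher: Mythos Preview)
Your proposal is correct and follows essentially the same route as the paper: both first establish the baseline $E \geq c\min\{A,B,C\}$ by combining Propositions~\ref{prop: theta large}, \ref{prop: sigma simple}, and \ref{eq: sigma small} according to the relative sizes of $\theta$ and $\sigma$, and both then upgrade $C$ to $C+D$ by treating separately the regime $\sigma > \theta/4$ (where $C \gtrsim A$) and the regime $\sigma \le \theta/4$ (where, for small $\theta$, Proposition~\ref{prop: log} supplies the additional bound $\min\{A,D\}$, while for large $\theta$ one has $D \lesssim C$). Your final inequality $M_1+M_2 \ge T$ is precisely what the paper uses in its concluding chain $\tfrac{c}{2}\min\{A,B,C\} + \tfrac{c'}{2}\min\{A,D\} \ge c''\min\{A,B,C+D\}$. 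The only organizational difference is that the paper disposes of the entire range $\theta \ge \theta_0$ at the outset, whereas you split on $\theta$ only during the upgrade step; both orderings work.

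One small oversight: the conclusion $\sigma \in (\theta^{k+1},\theta^k]$ from $k := \lfloor |\log\sigma|/|\log\theta| \rfloor$ is only valid for $\sigma \le 1$. For $\sigma > 1$ one can still formally have $k \ge k_0$, yet Proposition~\ref{eq: sigma small} does not apply. This is harmless and easily patched (the paper does so explicitly) by separating off $\sigma \ge 1$: there Proposition~\ref{prop: sigma simple} already gives $E \ge c\min\{A,\sigma,C\} = cA \ge c\min\{A,B,C\}$.
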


\begin{proof}
    Let $\theta_0 \in (0,1/2]$ be such that Proposition \ref{eq: sigma small} and Proposition \ref{prop: log} hold for all $0< \theta \leq \theta_0$.
    Now we distinguish the following cases:
    First assume that $\theta_0 \leq \theta$. 
    Note that for $\frac12 \geq \theta \geq \theta_0$ it holds 
        \[
        \theta \sigma |\log \theta| \leq \theta \sigma |\log \theta_0| \leq  |\log \theta_0| \theta\frac{\sigma^3}{\eps^2}.
        \]
        Hence, by Proposition \ref{prop: theta large} we estimate for all $\sigma > \sqrt{2}\pi \eps$
        \begin{align}
        \inf_{\mathcal{A}_{\sigma,\theta,\eps}^{(\epsilon)}}  E_{\sigma,\theta,\eps}^{(\epsilon)}  \geq &c_{LB}^{(1)} \min\left\{ \theta^2, \sigma (|\log \sigma|+1), \frac{\sigma^3}{\eps^2}  \right\} \\ \geq &c_{LB}^{(1)} \log(2) \min\left\{ \theta^2, \sigma \left(\frac{|\log \sigma|}{|\log \theta|}+1 \right), \theta \frac{\sigma^3}{\eps^2} \right\} \\ \geq &\frac{c_{LB}^{(1)} \log(2)}{2 |\log (\theta_0)|} \min\left\{ \theta^2, \sigma \left(\frac{|\log \sigma|}{|\log \theta|}+1 \right), \theta \frac{\sigma^3}{\eps^2} + \theta \sigma |\log \theta| \right\}.
        \end{align}
        Next, we assume that $0 < \theta \leq \theta_0$. We distinguish the following cases:
        \begin{itemize}
        \item If $\sigma \geq 1$ then it follows from Proposition \ref{prop: sigma simple} that
        \[
        \inf_{\mathcal{A}_{\sigma,\theta,\eps}^{(\epsilon)}}E_{\sigma,\theta,\eps}^{(\epsilon)} \geq c_{LB}^{(2)} \min\{ \theta^2, \sigma, \theta \frac{\sigma^3}{\eps^2} \} = c_{LB}^{(2)} \theta^2 \geq c_{LB}^{(2)} \min\left\{ \theta^2,\sigma \left( \frac{|\log \sigma|}{|\log \theta|} + 1 \right), \theta \frac{\sigma^3}{\eps^2}  \right\}.
        \]
            \item If $1 > \sigma \geq \theta^{32}$ then it holds that $\frac{|\log \sigma|}{|\log \theta|} \leq 32$. Consequently, using Proposition \ref{prop: sigma simple} we estimate
            \[
            \inf_{\mathcal{A}_{\sigma,\theta,\eps}^{(\epsilon)}}E_{\sigma,\theta,\eps}^{(\epsilon)} \geq c_{LB}^{(2)} \min\left\{ \theta^2, \sigma, \theta \frac{\sigma^3}{\eps^2}  \right\} \geq \frac{c_{LB}^{(2)}}{64} \min\left\{ \theta^2, \sigma \left( \frac{|\log \sigma|}{|\log \theta|} + 1 \right), \theta \frac{\sigma^3}{\eps^2}\right\}.
            \]
            \item If $\sigma \in (\theta^{k+1},\theta^k]$ for some $k \geq 32$ then we have that $\frac{|\log \sigma|}{|\log \theta|} \leq k+1 \leq 2k$ which implies using Proposition \ref{prop: log}
            \[
            \inf_{\mathcal{A}_{\sigma,\theta,\eps}^{(\epsilon)}}E_{\sigma,\theta,\eps}^{(\epsilon)}  \geq c_{LB}^{(3)} \min\left\{ \theta^2, k \sigma, \theta \frac{\sigma^3}{\eps^2}  \right\} \geq \frac{c_{LB}^{(3)}}4 \min\left\{ \theta^2,  \sigma \left( \frac{|\log \sigma|}{|\log \theta|} + 1\right), \theta \frac{\sigma^3}{\eps^2}  \right\}.
            \]
        \end{itemize}
        Summarizing the above cases, we find for $c := \min\left\{ \frac{c_{LB}^{(2)}}{64}, \frac{c_{LB}^{(3)}}{4}  \right\}$ that it holds for $0<\theta \leq \theta_0$ and $ \sigma > \sqrt{2}\pi \eps > 0$ that
        \begin{equation} \label{eq: lower theta small total}
        \inf_{\mathcal{A}_{\sigma,\theta,\eps}^{(\epsilon)}}E_{\sigma,\theta,\eps}^{(\epsilon)}  \geq c \min\left\{ \theta^2,  \sigma \left( \frac{|\log \sigma|}{|\log \theta|} + 1\right), \theta \frac{\sigma^3}{\eps^2}  \right\}.
        \end{equation}
        Now, note that if $\sigma \geq \frac{\theta}4$ it holds that $\theta \frac{\sigma^3}{\eps^2} \geq \frac{\theta^2}4$ and therefore
        \[
        \inf_{\mathcal{A}_{\sigma,\theta,\eps}^{(\epsilon)}}E_{\sigma,\theta,\eps}^{(\epsilon)}  \geq c \min\left\{ \theta^2,  \sigma \left( \frac{|\log \sigma|}{|\log \theta|} + 1\right), \theta \frac{\sigma^3}{\eps^2}  \right\} \geq \frac{c}4 \min\left\{ \theta^2,  \sigma \left( \frac{|\log \sigma|}{|\log \theta|} + 1\right), \theta \frac{\sigma^3}{\eps^2 } +\theta \sigma |\log \theta| \right\}.
        \]
        On the other hand, if $\sigma \leq \frac{\theta}4$ it follows from \eqref{eq: lower theta small total} and Proposition \ref{prop: log}
        \begin{align}
            \inf_{\mathcal{A}_{\sigma,\theta,\eps}^{(\epsilon)}}E_{\sigma,\theta,\eps}^{(\epsilon)}  \geq& \frac{c}2 \min\left\{ \theta^2,  \sigma \left( \frac{|\log \sigma|}{|\log \theta|} + 1\right), \theta \frac{\sigma^3}{\eps^2}  \right\} + \frac{c_{LB}^{(4)}}2 \min\{\theta^2, \sigma \theta |\log \theta|\} \\ \geq &\min\{ c/2, c_{LB}^{(4)}/2 \} \, \min\left\{ \theta^2, \sigma \left( \frac{|\log \sigma|}{|\log \theta|} + 1 \right), \theta \frac{\sigma^3}{\eps^2} + \theta \sigma |\log \theta| \right\}.
        \end{align}
\end{proof}

\subsection{The lower bound for the anisotropic quadratic perturbation}

We start with an estimate that controls the full gradient by the derivatives $\partial_1 \beta_1$ and $\partial_2 \beta_2$. In the case of $\operatorname{curl}$-free fields, this essentially resembles a classical estimate for the Laplacian.

\begin{lemma} \label{lemma: est elliptic}
There exists $C>0$ such that it holds for all $0<A<L,H$ and $\beta \in L^2((-A,L+A)\times (-A,H+A);\R^2)$ with $\partial_{1} \beta_1, \partial_2 \beta_2, \operatorname{curl} \beta \in L^2((-A,L+A)\times (-A,H+A)$ that
\begin{align}
&\int_{(0,L) \times (0,H)} |D \beta|^2 \, d\mathcal{L}^2 \leq  &C \left(\int_{(-A,L+A)\times (-A,H+A)} (\partial_1 \beta_1)^2 + (\partial_2 \beta_2)^2 + A^{-2} |\beta|^2 + |\operatorname{curl }\beta|^2  \, dx \right)
\end{align}
\end{lemma}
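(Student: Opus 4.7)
The plan is to reduce the estimate to the standard Hodge-type identity
\[
\int_{\R^2} |Du|^2 \, d\mathcal{L}^2 = \int_{\R^2} (\operatorname{div} u)^2 + (\operatorname{curl} u)^2 \, d\mathcal{L}^2
\]
for compactly supported $u$, which promotes $L^2$ control on $\operatorname{div} u$ and $\operatorname{curl} u$ to $L^2$ control on the full gradient. Note that $\beta$ comes with $L^2$ bounds on $\partial_1 \beta_1$, $\partial_2 \beta_2$ and $\operatorname{curl} \beta$ (hence on $\operatorname{div} \beta$), but not a priori on the mixed derivatives $\partial_2 \beta_1$, $\partial_1 \beta_2$, so the identity is exactly what is needed.

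First I would fix a cutoff $\eta \in C_c^\infty(\R^2)$ with $\eta \equiv 1$ on $(0,L)\times(0,H)$, $\operatorname{supp}\eta \subseteq (-A,L+A) \times (-A,H+A)$, and $\|\nabla \eta\|_{\infty} \leq C/A$. Setting $u := \eta \beta$ (extended by zero to $\R^2$), I compute distributionally
\begin{align*}
\operatorname{div} u &= \eta(\partial_1 \beta_1 + \partial_2 \beta_2) + \nabla \eta \cdot \beta, \\
\operatorname{curl} u &= \eta \operatorname{curl} \beta + (\partial_1 \eta)\beta_2 - (\partial_2 \eta)\beta_1,
\end{align*}
both of which lie in $L^2(\R^2)$ with
\begin{align*}
\|\operatorname{div} u\|_{L^2(\R^2)}^2 + \|\operatorname{curl} u\|_{L^2(\R^2)}^2 \leq C \int_{(-A,L+A)\times(-A,H+A)} (\partial_1\beta_1)^2 + (\partial_2\beta_2)^2 + (\operatorname{curl}\beta)^2 + A^{-2}|\beta|^2 \, d\mathcal{L}^2.
\end{align*}
Applying the Hodge identity to $u$ yields $u \in W^{1,2}(\R^2)$ with $\|Du\|_{L^2(\R^2)}^2$ controlled by the same right-hand side. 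Since $\eta \equiv 1$ on $(0,L)\times(0,H)$, the product rule $\eta D\beta = Du - \beta \otimes \nabla \eta$ on $\operatorname{supp}\eta$ together with $(a+b)^2 \leq 2a^2 + 2b^2$ gives
\[
\int_{(0,L)\times(0,H)} |D\beta|^2 \,d\mathcal{L}^2 \leq 2 \int_{\R^2} |Du|^2 \,d\mathcal{L}^2 + 2 \int_{\R^2} |\nabla \eta|^2 |\beta|^2 \,d\mathcal{L}^2,
\]
from which the claimed bound follows after absorbing the $A^{-2}|\beta|^2$ contribution.

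The only delicate point will be justifying the Hodge identity for a $u$ which is a priori only known to lie in $L^2$ with $\operatorname{div} u, \operatorname{curl} u \in L^2$; in particular we do not assume $\beta \in W^{1,2}$. The cleanest route is via the Fourier transform on $\R^2$: from $\hat u \in L^2$ and $\xi \cdot \hat u, \xi^\perp \cdot \hat u \in L^2$, the pointwise identity $|\xi|^2 |\hat u|^2 = |\xi \cdot \hat u|^2 + |\xi^\perp \cdot \hat u|^2$ yields $|\xi| \hat u \in L^2$, hence $u \in W^{1,2}(\R^2)$ with the required equality of norms. Alternatively, one may mollify $\beta$ inside the enlarged rectangle at scale $\delta < A/2$, apply the identity to the smooth compactly supported approximations $\eta \beta_\delta$ (where it reduces to integrating $\int \det(D u_\delta)\, d\mathcal{L}^2 = 0$ by parts twice), and pass to the limit $\delta \to 0$ using the uniform bounds above.
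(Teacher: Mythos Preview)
Your proof is correct and follows essentially the same idea as the paper: both localize with a cutoff and exploit the cross-term identity underlying the Hodge relation (equivalently, $\int \det(Du)\,d\mathcal{L}^2 = 0$ for compactly supported $u$). The paper carries out the integration by parts by hand with weight $\eta^2$ and invokes a one-line density argument, whereas you package the same computation as the global identity $\int_{\R^2}|Du|^2 = \int_{\R^2}(\operatorname{div} u)^2 + (\operatorname{curl} u)^2$ applied to $u=\eta\beta$ and justify it via the Fourier-side pointwise equality $|\xi|^2|\hat u|^2 = |\xi\cdot\hat u|^2 + |\xi^\perp\cdot\hat u|^2$; your Fourier route is a clean way to handle the low-regularity issue that the paper subsumes into ``by a density argument.''
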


\begin{proof}
By a density argument, we may assume that $\beta$ is smooth. Let $\eta \in C^{\infty}_c((-A,L+A) \times (-A,H+A); [0,1])$ be such that $\eta= 1$ on $(0,L) \times (0,H)$ and $|\nabla \eta| \leq CA^{-1}$. Then we find using integration by parts
\begin{align}
&\int_{(-A,L+A)\times (-A,H+A)} \left( \partial_2 \beta_2 \cdot \eta^2 \partial_1 \beta_1 + \partial_2 \beta_2 \cdot \beta_1 2 \eta \partial_1 \eta \right) \, d\mathcal{L}^2 \\
=&\int_{(-A,L+A)\times (-A,H+A)} \partial_2 \beta_2 \cdot \partial_1(\eta^2 \beta_1) \, d\mathcal{L}^2 \\
 = &\int_{(-A,L+A)\times (-A,H+A)} \partial_1 \beta_2 \cdot \partial_2 (\eta^2 \beta_1) \, d\mathcal{L}^2 \\
= &\int_{(-A,L+A)\times (-A,H+A)} \left(\eta^2 |\partial_2 \beta_1|^2  + (\operatorname{curl }\beta) \cdot \eta^2 (\partial_2 \beta_1) + \partial_2 \beta_1 (2\eta \partial_2 \eta) \beta_1 + (\operatorname{curl }\beta) \cdot \beta_1 2 \eta \partial_2 \eta \right) \, d\mathcal{L}^2 
\end{align}
Using Young's inequality, we obtain
\begin{align}
        &\int_{(-A,L+A)\times (-A,H+A)} \eta^2 |\partial_2 \beta_1|^2 \, d\mathcal{L}^2 \\
    \leq &  \int_{(-A,L+A)\times (-A,H+A)}  2 |\operatorname{curl } \beta|^2 + 7 |\beta|^2 |\nabla \eta|^2 + |\partial_2 \beta_2|^2 + |\partial_1 \beta_1|^2 + \frac12 \eta^2 |\partial_2 \beta_1|^2 \, d\mathcal{L}^2 \\
\end{align}
and therefore by absorbing the last term on the left-hand side
\begin{align}
&\int_{(0,L)\times (0,H)}  |\partial_2 \beta_1|^2 \, d\mathcal{L}^2 \\
    \leq &\int_{(-A,L+A)\times (-A,H+A)} \eta^2 |\partial_2 \beta_1|^2 \, d\mathcal{L}^2 \\
    \leq &C \int_{(-A,L+A)\times (-A,H+A)}  \bigg( |\operatorname{curl } \beta|^2 + A^{-2} |\beta|^2  + |\partial_2 \beta_2|^2 + |\partial_1 \beta_1|^2 \bigg)\, d\mathcal{L}^2.
\end{align}
The estimate for $\int_{(0,L)\times (0,H)}  |\partial_1 \beta_2|^2 \, d\mathcal{L}^2 $ is shown analogously.
\end{proof}

The above estimate will allow us to control $E_{\sigma,\theta,\eps}^{(a)}$ on a small domain from below by $E_{\sigma,\theta,\eps}^{(2)}$ on a larger domain. 

\begin{lemma}\label{lemma: est anisotropic}
    There exists $c>0$ such that for all $0 < \eps < \sqrt{2} \pi \sigma \leq \sqrt{2} \pi$ it holds for all $\beta \in \mathcal{A}_{\sigma,\theta,\eps}^{(a)}$
    \[
    c \int_{(0,1/2) \times (1/4,3/4)} W(\beta) + \sigma^2 |D\beta|^2 \, d\mathcal{L}^2 \leq \int_{(0,1)^2} W(\beta) + \sigma^2 |\partial_1 \beta_1|^2 + \sigma^2|\partial_2 \beta_2|^2 \, d\mathcal{L}^2 + \sigma^2.
    \]
\end{lemma}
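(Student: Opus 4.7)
The plan is to apply the elliptic estimate of Lemma \ref{lemma: est elliptic} to a suitable reflection extension of $\beta$ across $\{0\}\times(0,1)$, and then to control the resulting $L^2$-norm of $\operatorname{curl}\beta$ by the bulk energy via the vortex lower bound of Lemma \ref{lem: lb vortex general}.

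More concretely, I would extend $\beta$ to $(-1/4,1)\times(0,1)$ by setting, for $x<0$,
\[
\tilde\beta_1(x,y):=\beta_1(-x,y),\qquad \tilde\beta_2(x,y):=2(1-2\theta)-\beta_2(-x,y),
\]
and $\tilde\beta:=\beta$ for $x\geq 0$. The affine reflection of $\beta_2$ about the prescribed boundary value $1-2\theta$ is chosen so that $\tilde\beta_2$ has matching traces from both sides of $\{0\}\times(0,1)$ (this is exactly where the boundary condition $\beta_2(0,\cdot)=1-2\theta$ enters). A direct integration by parts then shows that the distributional derivatives $\partial_1\tilde\beta_1$, $\partial_2\tilde\beta_2$, and $\operatorname{curl}\tilde\beta$ acquire no singular contribution at $x=0$ and are given by the pointwise expressions $-\partial_1\beta_1(-x,y)$, $-\partial_2\beta_2(-x,y)$, and $\operatorname{curl}\beta(-x,y)$ for $x<0$. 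Consequently, their $L^2$-norms on $(-1/4,1)\times(0,1)$ are at most twice those of the original quantities on $(0,1)^2$.

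With the extension at hand, I apply Lemma \ref{lemma: est elliptic} (after translating by $1/4$ in $y$) with $L=H=1/2$, $A=1/4$, target $(0,1/2)\times(1/4,3/4)$, and source $(-1/4,3/4)\times(0,1)$. The contribution of $\sigma^2\int(\partial_1\tilde\beta_1)^2+(\partial_2\tilde\beta_2)^2$ is directly controlled by $E_{\sigma,\theta,\eps}^{(a)}(\beta)$. For the $A^{-2}|\tilde\beta|^2$ term, Lemma \ref{lemma: est W} yields $|\beta|^2\leq 2W(\beta)+C$, so $\sigma^2\int|\tilde\beta|^2\leq C\sigma^2\bigl(\int_{(0,1)^2}W(\beta)+1\bigr)\leq C\bigl(\int W(\beta)+\sigma^2\bigr)$ using $\sigma\leq 1$.

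The crucial remaining step is to estimate $\sigma^2\int_{(0,1)^2}|\operatorname{curl}\beta|^2$: exploiting the concrete form $\operatorname{curl}\beta=\sigma\sum_i\gamma_i\delta_{x_i}*\rho_\eps$ together with $\|\rho_\eps\|_\infty\leq C\eps^{-2}$ and $|\operatorname{supp}\rho_\eps|\lesssim\eps^2$ gives $\int|\operatorname{curl}\beta|^2\leq Cn\sigma^2/\eps^2$, where $n$ is the number of vortices; Lemma \ref{lem: lb vortex general} then provides $nc_v\sigma^4/\eps^2\leq\int_{(0,1)^2}W(\beta)$, so $\sigma^2\int|\operatorname{curl}\beta|^2\leq C\int_{(0,1)^2}W(\beta)$. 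Combining all terms and adding the trivial bound $\int_{(0,1/2)\times(1/4,3/4)}W(\beta)\leq\int_{(0,1)^2}W(\beta)$ yields the claim. The main obstacle is twofold: constructing the extension so that $\partial_1\tilde\beta_1$, $\partial_2\tilde\beta_2$, and $\operatorname{curl}\tilde\beta$ remain in $L^2$ without a singular part at $\{0\}\times(0,1)$ (which forces the specific affine reflection above rather than a plain even or odd reflection and is why the boundary datum $1-2\theta$ is needed explicitly), and controlling $\int|\operatorname{curl}\beta|^2$ by the bulk energy, where the vortex self-energy estimate of Lemma \ref{lem: lb vortex general} is essential.
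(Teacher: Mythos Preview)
Your proposal is correct and follows the same overall strategy as the paper—extend $\beta$ across $\{0\}\times(0,1)$, apply Lemma~\ref{lemma: est elliptic}, and control $\sigma^2\int|\operatorname{curl}\beta|^2$ by $\int W(\beta)$ via Lemma~\ref{lem: lb vortex general}—but the extension step is genuinely different. The paper first subtracts a smooth field $\mu$ with $\operatorname{curl}\mu=\operatorname{curl}\beta$ to obtain a potential $u$ with $\nabla u=\beta-\mu$, and then extends $u$ by a three-term Lions-type reflection $\tilde u(x,y)=\sum_{i=1}^3 c_i\,u(-\tfrac{i}{6}x,y)$ with coefficients chosen so that $u$, $\partial_1 u$, and $\partial_1^2 u$ all match at $x=0$; this does not use the specific boundary value $1-2\theta$ at all. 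Your direct affine reflection of $\beta$ is more elementary and avoids the auxiliary potential, but it works precisely because the tangential trace $\beta_2(0,\cdot)$ is a \emph{constant}: the map $t\mapsto 2(1-2\theta)-t$ fixes that constant, so the $H(\operatorname{curl})$ traces match and the glued field has no singular $\operatorname{curl}$ at $x=0$. Thus your approach trades generality (it would fail for a non-constant boundary datum) for simplicity, while the paper's higher-order reflection is boundary-value agnostic. The remaining estimates—the bound $|\tilde\beta|^2\lesssim W(\beta)+1$ and the vortex-counting argument for $\int|\operatorname{curl}\beta|^2$—are handled identically in both proofs.
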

\begin{proof}
    Let $\beta \in \mathcal{A}^{(a)}_{\sigma,\theta,\eps}$ with $\operatorname{curl} \beta = \sigma \sum_{k=1}^n \gamma_k \delta_{x_k} * \rho_{\eps}$.
    We define the sets $S_k = \{ x_k + t e_1: t > 0\}$ and set
    \[
    \mu = \sigma \sum_{k=1}^n \gamma_k e_2 \mathcal{H}^1_{| S_k } * \rho_{\eps}. 
    \]
    Then it holds that $\operatorname{curl } \mu = \operatorname{curl } \beta$.
    In particular, there exists $u \in W^{1,2}((0,1)^2)$ such that $\nabla u = \beta - \mu$. 
    As $\mu$ is smooth, $\partial_1\partial_1 u, \partial_2\partial_2 u \in L^2((0,1)^2)$ exist.
    Next, we will extend $u$ to a function $\tilde{u} \in W^{1,2}((-1,1) \times (0,1))$ such that $\partial_1\partial_1 \tilde{u}, \partial_2 \partial_2 \tilde{u} \in L^2((-1,1) \times (0,1))$ exist, cf.~\cite[Lemma 3.2]{GiKoZw}.
    For this let $c_1,c_2,c_3 \in \R$ be the unique solutions to the linear system
    \begin{equation}\label{eq: linear system}
        \sum_{i=1}^3 c_i = 1 , \qquad -\sum_{i=1}^3 c_i \frac{i}6 = 1 \qquad \text{ and } \qquad \sum_{i=1}^3 c_i \left(\frac{i}6\right)^2 = 1.
    \end{equation}
    Then we define 
    \[
    \tilde{u}(x,y) = \begin{cases}
        u(x,y) & \text{ if } x \geq 0, \\
        \sum_{i=1}^3 c_i u(-\frac{i}6 x, y) &\text{ if } x < 0.
    \end{cases}
    \]
    It follows from the equations \eqref{eq: linear system} that $\tilde{u} \in W^{1,2}((-1,1)\times (0,1))$ with $\partial_1\partial_1 \tilde{u}, \partial_2 \partial_2 \tilde{u} \in L^2((-1,1) \times (0,1))$.
    Next, we define 
    \[
    \tilde{\beta}(x,y) = \begin{cases}
        \beta(x,y) &\text{ if } x \geq 0, \\
        -\sum_{i=1}^3 c_i \frac{i}6 \beta(-\frac{i}6 x,y) &\text{ if } x < 0.
    \end{cases} \quad \text{ and } \quad  \tilde{\mu}(x,y) = \begin{cases}
        \mu(x,y) &\text{ if } x \geq 0, \\
        -\sum_{i=1}^3 c_i \frac{i}6 \mu(-\frac{i}6 x,y) &\text{ if } x < 0.
    \end{cases}
    \]
    Then it follows $\nabla \tilde{u} = \tilde{\beta} - \tilde{\mu}$. 
    Since $\partial_1\partial_1 \tilde{u}, \partial_2\partial_2 \tilde{u}$ and $\tilde{\mu}$ is smooth, it follows that $\partial_1 \tilde{\beta}_1, \partial_2 \tilde{\beta}_2 \in L^2((-1,1)\times (0,1))$ exist and for $x<0$ has the form $\partial_1 \tilde{\beta}_1(x,y) = \sum_{i=1}^3 c_i \left(\frac{i}6\right)^2 \partial_1 \beta_1(-\frac{i}6x,y)$ and $\partial_2 \tilde{\beta}_2(x,y) = -\sum_{i=1}^3 c_i \frac{i}6 \partial_2 \beta_2(-\frac{i}6x,y)$.
    Next, we compute for $x < 0$
    \begin{align}
        \left( \mathcal{H}^1_{|S_k} * \rho_{\eps} \right) (-\frac{i}6 x,y) &= \int_{S_k} \rho_{\eps}(- \frac{i}6 x - s,t) \, d\mathcal{H}^1 = \int_{\tilde{S}^i_k} \rho_{\eps}^i(-x-s ,t) \, d\mathcal{H}^1 = \left( \mathcal{H}^1_{|\tilde{S}_k^i} * \rho^i_{\eps} \right)(-x,y),
    \end{align}
    where $\rho_{\eps}^i(s,t) = \frac{6}i\rho_{\eps}(\frac{i}6s,t)$, $\tilde{S}_k^i = \{ \tilde{x}_k^i + t e_1: t> 0\}$ and $\tilde{x}_k^i$ is the point whose $x$-coordinate is scaled by $\frac{i}6$ compared to $x_k$ while its $y$-coordinate is the same as the one of $x_k$.
    It follows that 
    \begin{equation} \label{eq: curl tilde beta}
    \operatorname{curl } \tilde{\beta} = \sigma \sum_{k=1}^n \left( \gamma_k \delta_{x_k} * \rho_{\eps} + \gamma_k \sum_{i=1}^3 \delta_{\tilde{x}_k^i} * \rho_{\eps}^i   \right).
    \end{equation}
    Then we estimate using Lemma \ref{lemma: est elliptic}
    \begin{equation}
    \int_{(0,1/2) \times (1/4,3/4)} |D\beta|^2 \, d\mathcal{L}^2 \leq C \left( \int_{(-1/4,3/4)\times (0,1)} (\partial_1  \tilde{\beta}_1)^2 + (\partial_2 \tilde{\beta}_2)^2 + 16  |\tilde{\beta}|^2 + |\operatorname{curl }\tilde{\beta}|^2  \, d\mathcal{L}^2 \right) \label{eq: appl elliptic}
    \end{equation}
    Firstly, we note that by the form of $\partial_i \tilde{\beta}_i$ it holds
    \[
    \int_{(-1,1) \times (0,1)} (\partial_1  \tilde{\beta}_1)^2 + (\partial_2 \tilde{\beta}_2)^2 \, d\mathcal{L}^2 \leq C \int_{(0,1)^2} (\partial_1  \beta_1)^2 + (\partial_2 \beta_2)^2 \, d\mathcal{L}^2.
    \]
    Secondly, we observe by \eqref{eq: curl tilde beta}, $|\rho_{\eps}|,|\rho^i_{\eps}| \leq C \eps^{-2} \chi_{B_{\eps}(0)}$ and  Lemma \ref{lem: lb vortex general} that 
    \begin{align*}
        \int_{(-1,1) \times (0,1)} |\operatorname{curl } \tilde{\beta}|^2 \, d\mathcal{L}^2 &\leq C \sum_{k=1}^n \int_{B_{\eps}(x_k)} \frac{\sigma^2}{\eps^4} \, d\mathcal{L}^2 \\
        &\leq C n \frac{\sigma^2}{\eps^2} \leq \frac{C}{\sigma^2} \sum_{k=1}^n \int_{B_{\eps}(x_k)} W(\beta) \, d\mathcal{L}^2 \leq \frac{C}{\sigma^2} \int_{(0,1)^2} W(\beta) \, d\mathcal{L}^2.
    \end{align*}
    Eventually, note that
    \begin{align}
        \int_{(-1,1) \times (0,1)} |\tilde{\beta}|^2 \, d\mathcal{L}^2 &\leq C \sum_{i=1}^3 \int_{(-1,0) \times (0,1)} |\beta(-\frac{i}6x,y)|^2 \, d\mathcal{L}^2 + \int_{(0,1)^2} |\beta|^2 \, d\mathcal{L}^2 \\
        &\leq C \int_{(0,1)^2} |\beta|^2 \,d \mathcal{L}^2 \leq C \int_{(0,1)^2} W(\beta) + 1 \, d\mathcal{L}^2,
    \end{align}
    where we used that by Lemma \ref{lemma: est W} $|\beta|^2 \leq 2\operatorname{dist}(\beta,K)^2 + 4 \leq 2 W(\beta) + 4$.
     Hence, with the last three estimates, we conclude from \eqref{eq: appl elliptic} for $0< \sigma \leq 1$ that
    \begin{align*}
    \int_{(0,1/2) \times (1/4,3/4)} W(\beta) + \sigma^2 |D\beta|^2 \, d\mathcal{L}^2 & \leq C \left(\int_{(0,1)^2} W(\beta) + \sigma^2 (\partial_1 \beta_1)^2 + \sigma^2 (\partial_2 \beta_2)^2 \, d\mathcal{L}^2 + \sigma^2 \right).
    \end{align*}
\end{proof}

Armed with the above estimates, we will now prove the lower bound for the anisotropic energy $E_{\sigma,\theta,\eps}^{(a)}$.

\begin{proposition}\label{prop: lower aniso}
    There exists $c>0$ such that it holds for all $\theta \in (0,1/2]$ and $\sigma \geq \sqrt{2} \pi \eps >0$ that
    \[
    \inf_{\beta \in \mathcal{A}_{\sigma,\theta,\eps}^{(a)}} E_{\sigma,\theta,\eps}^{(a)}(\beta) \geq c \min\left\{ \theta^2, \sigma \left(\frac{|\log \sigma|}{|\log \theta|} + 1 \right), \theta \frac{\sigma^3}{\eps^2} + \sigma \theta |\log \theta| \right\}.
    \]
\end{proposition}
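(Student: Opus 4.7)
The plan is to combine Remark~\ref{rem: ani} (which extends Proposition~\ref{prop: sigma simple} to the anisotropic energy) with the elliptic estimate of Lemma~\ref{lemma: est anisotropic}, then invoke the already-established lower bound of Proposition~\ref{prop: main lower bound} for $\epsilon = 2$ via rescaling. I would split the argument according to whether $\sigma \geq c_0 \theta$ or $\sigma < c_0\theta$, for a small universal constant $c_0 > 0$ to be fixed at the end.

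When $\sigma \geq c_0\theta$, the scaling law reduces (up to constants) to $s(\sigma,\eps,\theta)\sim\theta^2$: one has $s\leq\theta^2$, while $\sigma\geq c_0\theta\geq 2c_0\theta^2$ using $\theta\leq 1/2$, and $\theta\sigma^3/\eps^2\geq 2\pi^2\theta\sigma\geq 2\pi^2c_0\theta^2$ using $\sigma\geq\sqrt{2}\pi\eps$, so that $\min\{\theta^2,\sigma,\theta\sigma^3/\eps^2\}\geq 2c_0\theta^2$. Remark~\ref{rem: ani} then immediately yields $E^{(a)}(\beta)\geq c\theta^2\geq c\,s(\sigma,\eps,\theta)$.

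In the complementary regime $\sigma<c_0\theta$, I would apply Lemma~\ref{lemma: est anisotropic} on $D:=(0,1/2)\times(1/4,3/4)$ to get $c\,E^{(2)}_{\sigma,\theta,\eps}(\beta;D)\leq E^{(a)}(\beta)+\sigma^2$, and then rescale by $\tilde\beta(x,y):=\beta(x/2,1/4+y/2)$ on $(0,1)^2$. A direct computation shows $\tilde\beta_2(0,\cdot)=1-2\theta$, $\operatorname{curl}\tilde\beta=2\sigma\sum\gamma_i\delta_{\tilde x_i}*\rho_{2\eps}$ with $\tilde x_i=(2x_{i,1},2x_{i,2}-1/2)$, and
\[
E^{(2)}_{2\sigma,\theta,2\eps}(\tilde\beta)=4\,E^{(2)}_{\sigma,\theta,\eps}(\beta;D)\leq \tfrac{4}{c}\bigl(E^{(a)}(\beta)+\sigma^2\bigr),
\]
while $(2\sigma)/(2\eps)=\sigma/\eps\geq\sqrt{2}\pi$. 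Applying Proposition~\ref{prop: main lower bound} with $\epsilon=2$ to $\tilde\beta$ gives $E^{(2)}_{2\sigma,\theta,2\eps}(\tilde\beta)\geq c\,s(2\sigma,2\eps,\theta)$, and a short term-by-term comparison (using that $|\log(2\sigma)|\geq \tfrac12|\log\sigma|$ for $\sigma\leq 1/4$) shows $s(2\sigma,2\eps,\theta)\geq c's(\sigma,\eps,\theta)$. Combining, $E^{(a)}(\beta)\geq c''s(\sigma,\eps,\theta)-C\sigma^2$. To absorb $\sigma^2$, I would note that $\sigma<c_0\theta\leq c_0/2$ gives $\sigma^2\leq c_0\theta^2$, $\sigma^2\leq c_0\sigma$, and, using $\eps^2\leq\sigma^2/(2\pi^2)\leq c_0\theta\sigma/(2\pi^2)$, also $\sigma^2\leq c_0\,\theta\sigma^3/\eps^2$; hence $\sigma^2\leq c_0\,s(\sigma,\eps,\theta)$, and choosing $c_0$ small enough (depending on $c''$, $C$) absorbs the error and yields $E^{(a)}(\beta)\geq\tfrac{c''}{2}s(\sigma,\eps,\theta)$.

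The main obstacle is the admissibility of $\tilde\beta$ as an element of $\mathcal{A}^{(2)}_{2\sigma,\theta,2\eps}$: the inclusion $B_{2\eps}(\tilde x_i)\subseteq(0,1)^2$ can fail for vortex centers within $\eps$ of $\partial D$, and vortices $x_i\in\Omega\setminus D$ whose $\eps$-balls intersect $D$ leave truncated mollifiers in $\operatorname{curl}(\beta|_D)$ that do not fit the prescribed $\mathcal{M}_{2\sigma,2\eps}$ form. The cleanest remedy is to first restrict to the shrunken subdomain $D_\eps:=(0,1/2-\eps)\times(1/4+\eps,3/4-\eps)$, whose left boundary still carries the trace condition, and rescale that onto $(0,1)^2$: the energy lost on the thin strip $D\setminus D_\eps$ is concentrated on an area $O(\eps)\leq O(\sigma)$ and is controlled by an additional application of Lemma~\ref{lemma: est anisotropic}, so that it can be folded into the $\sigma^2$ slack already present on the right-hand side without changing the scaling.
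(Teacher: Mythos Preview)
Your proposal is essentially the paper's proof: split at $\sigma\sim c_0\theta$, use Remark~\ref{rem: ani} when $\sigma\geq c_0\theta$, and otherwise combine Lemma~\ref{lemma: est anisotropic} with the rescaling $\tilde\beta(x,y)=\beta(x/2,y/2+1/4)$ to invoke Proposition~\ref{prop: main lower bound} for $\epsilon=2$, absorbing the $\sigma^2$ slack by choosing $c_0$ small. Your rescaled curl correctly lands in $\mathcal{M}_{2\sigma,2\eps}$ (the paper writes $\mathcal{M}_{\sigma/2,\eps/2}$, a harmless slip since only $\sigma/\eps$ enters), and you explicitly flag the boundary-admissibility issue for $\tilde\beta$ that the paper leaves implicit.
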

\begin{proof}
Let $\beta \in \mathcal{A}_{\sigma,\theta,\eps}^{(a)}$.
First, assume that $0 < \sigma \leq c_1 \theta \leq 1$, where $0 < c_1 \leq 2 $ will be specified below.  By Lemma \ref{lemma: est anisotropic} we have
\begin{equation}
c \int_{(0,1/2) \times (1/4,3/4)} W(\beta) + \sigma^2 |D\beta|^2 \, d\mathcal{L}^2 \leq \int_{(0,1)^2} W(\beta) + \sigma^2 |\partial_1 \beta_1|^2 + \sigma^2|\partial_2 \beta_2|^2 \, d\mathcal{L}^2 + \sigma^2. \label{eq: tilde beta 1}
\end{equation}
Next, define $\tilde{\beta}: (0,1)^2 \to \R^2$ as $\tilde{\beta}(x,y) = \beta(x/2, y/2 + 1/4)$. By a simple change of variables, it follows that
\begin{equation}
\int_{(0,1)^2} W(\tilde{\beta}) + \sigma^2 |D\tilde{\beta}|^2 \, d\mathcal{L}^2 = \int_{(0,1/2) \times (1/4,3/4)} 4 W(\beta) + \sigma^2 |D\beta|^2 \, d\mathcal{L}^2. \label{eq: tilde beta 2}
\end{equation}
Moreover, $\operatorname{curl} \tilde{\beta}(x,y) = \frac12 \operatorname{curl} \beta(x/2,y/2+1/4) \in \mathcal{M}_{\sigma/2, \eps/2}$ and $\tilde{\beta}_2(0,y) = \beta_2(0,y/2 + 1/4) = 1-2\theta$. Hence, $\tilde{\beta} \in \mathcal{A}_{\theta,\sigma/2,\eps/2}$ and we obtain from Proposition \ref{prop: main lower bound} that 
\begin{align}
\int_{(0,1)^2} W(\tilde{\beta}) + \left(\frac{\sigma}2\right)^2 |D\beta|^2 \, d\mathcal{L}^2 &\geq c \min\left\{ \theta^2, \frac{\sigma}2 \left(\frac{|\log \sigma/2|}{|\log \theta|} + 1 \right), \theta \frac{\sigma^3}{2 \eps^2} + \theta \frac{\sigma}2 |\log \theta| \right\} \\
&\geq \frac{c}2 \min\left\{ \theta^2, \sigma \left(\frac{|\log \sigma|}{|\log \theta|} + 1 \right), \theta \frac{\sigma^3}{\eps^2} + \theta \sigma |\log \theta| \right\}. \label{eq: tilde beta 3}
\end{align}
Combining \eqref{eq: tilde beta 1}, \eqref{eq: tilde beta 2} and \eqref{eq: tilde beta 3} it follows that
\begin{equation}\label{eq: lb case 1}
\int_{(0,1)^2} W(\beta) + \sigma^2 |\partial_1 \beta_1|^2 + \sigma^2|\partial_2 \beta_2|^2 \, d\mathcal{L}^2 + \sigma^2 \geq \frac{c}2  \min\left\{ \theta^2, \sigma \left(\frac{|\log \sigma|}{|\log \theta|} + 1 \right), \theta \frac{\sigma^3}{\eps^2} + \theta \sigma |\log \theta| \right\}.
\end{equation}
Now fix $c_1 = \min\left\{\frac{\sqrt{c}}2, \frac{c}4,1 \right\}$ where $c>0$ is the universal constant in the estimate above. Then it follows for $0 < \sigma \leq c_1 \theta \leq 1$ that $\sigma^2 \leq \frac{c}4 \theta^2$, $\sigma^2 \leq \frac{c}4 \sigma \leq \frac{c}4 \sigma \left( \frac{|\log \sigma|}{|\log \theta|} + 1\right)$ and $\sigma^2 \leq \frac{c}4 \theta \sigma \leq \frac{c}4 \left( \theta \frac{\sigma^3}{\eps^2} + \theta \sigma |\log \theta| \right)$, i.e.,
\[
\sigma^2 \leq \frac{c}4 \min\left\{ \theta^2, \sigma \left( \frac{|\log \sigma|}{|\log \theta|} + 1\right), \theta \frac{\sigma^3}{\eps^2} + \theta \sigma |\log \theta|\right\}.
\]
Then the assertion follows from \eqref{eq: lb case 1}.

Next, we  will assume $\sigma \geq c_1 \theta$. 
By Remark \ref{rem: ani} it holds
\[
E_{\sigma,\theta,\eps}^{(a)}(\beta) \geq c_{LB}^{(2)} \min\left\{ \theta^2, \sigma, \theta \frac{\sigma^3}{\eps^2} \right\} \geq c_{LB}^{(2)}  \min\left\{ \theta^2, c_1 \theta, c_1 \theta^2 \frac{\sigma^2}{\eps^2} \right\} \geq c_{LB}^{(2)}  c_1 \theta^2.
\]
This proves the assertion in the case $\sigma \geq c_1 \theta$.

\end{proof}

\section*{Appendix}

Eventually, we provide an estimate that shows that in case of the energy $E_{\sigma,\theta,\eps}^{(1)}$ the estimate for the ball construction, Proposition \ref{prop: est annulus}, could be replaced by the simpler estimate below which only involves the regularizing term.
The same is not possible for the energy $E_{\sigma,\theta,\eps}^{(2)}$ since the typical behavior near a vortex at the origin is $|D \beta|^2 \sim \frac{\sigma}{|x|^2}$ and therefore $\sigma^2 \int_{B_1(0) \setminus B_{\eps}(0)} |D\beta|^2 \sim \frac{\sigma^4}{\eps^2} \ll \sigma^2 |\log \theta|$ for $\theta > 0$ small enough. 

\begin{lemma}
    For all $0 < R_1 < R_2$ and $\beta: B_{R_2}(0) \to \R^2$ with $\operatorname{curl } \beta = 0$ in $B_{R_2}(0) \setminus B_{R_1}(0)$ it holds 
    \[
    |D\beta|(B_{R_2}(0) \setminus B_{R_1}(0)) \geq  |\operatorname{curl } \beta(B_{R_1}(0)) | \, \log \frac{R_2}{R_1}.
    \]
\end{lemma}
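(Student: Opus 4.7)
The plan is to combine a polar-coordinate slicing of $|D\beta|$ with an integration-by-parts identity on each circle $\partial B_r(0)$, exploiting the fact that on the curl-free annulus $A := B_{R_2}(0)\setminus B_{R_1}(0)$ the circulation $\int_{\partial B_r(0)}\beta\cdot\tau\,d\mathcal{H}^1$ is independent of $r \in (R_1,R_2)$ and equal to $c:=\operatorname{curl}\beta(B_{R_1}(0))$. By mollifying $\beta$ on compactly contained sub-annuli and invoking the lower semicontinuity of the total variation together with continuity of the circulation under convolution, one reduces to the case of a smooth $\beta$ on $A$.

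First, since any reasonable matrix norm on $\R^{2\times 2}$ satisfies $|M|\geq |M\tau|$ for a unit vector $\tau$, we have $|\nabla\beta|\geq |\partial_\tau\beta|$ pointwise, and Fubini in polar coordinates yields
\[
|D\beta|(A)\;\geq\;\int_A |\partial_\tau\beta|\,d\mathcal{L}^2 \;=\;\int_{R_1}^{R_2}\int_{\partial B_r(0)}|\partial_\tau\beta|\,d\mathcal{H}^1\,dr.
\]
For fixed $r\in(R_1,R_2)$, I parameterise $\partial B_r(0)$ by $\gamma(\phi):=\beta(r\cos\phi,r\sin\phi)$. The chain rule gives $\gamma'(\phi) = r\,\partial_\tau\beta$, hence $\int_{\partial B_r(0)}|\partial_\tau\beta|\,d\mathcal{H}^1=\int_0^{2\pi}|\gamma'(\phi)|\,d\phi$.

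Next, writing $\tau(\phi)=(-\sin\phi,\cos\phi)$ and $\nu(\phi)=(\cos\phi,\sin\phi)$, componentwise integration by parts together with the periodicity $\gamma(0)=\gamma(2\pi)$ yields
\[
\int_{\partial B_r(0)}\beta\cdot\tau\,d\mathcal{H}^1 \;=\; r\int_0^{2\pi}\gamma(\phi)\cdot\tau(\phi)\,d\phi \;=\; -r\int_0^{2\pi}\gamma'(\phi)\cdot\nu(\phi)\,d\phi,
\]
so that $|c|=\bigl|\int_{\partial B_r(0)}\beta\cdot\tau\,d\mathcal{H}^1\bigr|\leq r\int_0^{2\pi}|\gamma'(\phi)|\,d\phi=r\int_{\partial B_r(0)}|\partial_\tau\beta|\,d\mathcal{H}^1$, where the equality of circulations for different $r$ comes from Stokes' theorem together with $\operatorname{curl}\beta\equiv 0$ on $A$. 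Combining this pointwise-in-$r$ estimate with the slicing bound and integrating $|c|/r$ over $r\in(R_1,R_2)$ produces the desired factor $\log(R_2/R_1)$.

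There is no serious obstacle here; the proof is essentially three lines once the right identity is written down. The only technical point is the smooth approximation needed to legitimise the slicing and the chain-rule manipulation, which is routine for BV vector fields on an open set. It is worth noting that the integration-by-parts identity used above exploits only the periodicity of $\gamma$ and not the curl-free hypothesis; the latter enters exclusively through Stokes' theorem to guarantee $\int_{\partial B_r(0)}\beta\cdot\tau\,d\mathcal{H}^1=c$ for every $r\in(R_1,R_2)$.
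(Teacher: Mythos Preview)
Your proof is correct, but it takes a different route from the paper's. Both arguments reduce to showing a circle-by-circle estimate of the form
\[
\int_{\partial B_r(0)} |D\beta\,v|\,d\mathcal{H}^1 \;\geq\; \frac{|c|}{r},\qquad c=\operatorname{curl}\beta(B_{R_1}(0)),
\]
for a suitable unit vector field $v$, and then integrate in $r$. The paper chooses $v=\nu$ (the radial direction): it differentiates the circulation $r\mapsto\int_{\partial B_r}\beta\cdot\tau\,d\mathcal{H}^1$ in $r$, computes this derivative explicitly as $\tfrac{1}{r}c+\int_{\partial B_r}(\partial_r\beta)\cdot\tau\,d\mathcal{H}^1$, and uses that it vanishes on the curl-free annulus. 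You instead choose $v=\tau$ (the tangential direction): your integration-by-parts identity on each circle, $\int_{\partial B_r}\beta\cdot\tau\,d\mathcal{H}^1=-r\int_0^{2\pi}\gamma'(\phi)\cdot\nu(\phi)\,d\phi$, converts the circulation into a tangential-derivative integral directly, without differentiating in $r$. As you correctly observe, your identity uses only the periodicity of $\gamma$; the curl-free hypothesis enters solely through Stokes' theorem to guarantee that the circulation equals $c$ for every $r$. Both approaches give the same sharp constant and are equally short; yours has the minor conceptual advantage of cleanly separating the two ingredients (constancy of circulation versus the pointwise-in-$r$ bound), while the paper's version makes explicit the mechanism --- a scaling identity for $\int_{\partial B_r}\beta\cdot\tau$ --- that forces $\partial_r\beta$ to be large.
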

\begin{proof}
    By approximation, we may assume that $\beta$ is smooth. 
    Since, $\operatorname{curl } \beta = 0$ in $B_{R_2}(0) \setminus B_{R_1}(0)$ it holds for $R_1 < r < R_2$ that
    \[
    \frac{d}{dr} \int_{\partial B_{r}(0)} \beta \cdot \tau \, d \mathcal{H}^1 = 0.
    \]
    On the other hand, we compute using the notation $x^{\perp} = (-x_2,x_1)^T$
    \begin{align*}
       0= &\frac{d}{dr} \int_{\partial B_{r}(0)} \beta \cdot \tau \, d \mathcal{H}^1 \\
        =&\frac{d}{dr} r \int_{\partial B_1(0)} \beta(rx) \cdot x^{\perp} \, d\mathcal{H}^1 \\
        =&\int_{\partial B_1(0)} \beta(rx) \cdot x^{\perp} \, d\mathcal{H}^1 + r \int_{\partial B_1(0)} (\partial_r\beta)(rx) \cdot x^{\perp} \, d\mathcal{H}^1 \\
        =& \frac{1}r \int_{\partial B_{r}(0)} \beta \cdot \tau \, d \mathcal{H}^1 +  \int_{\partial B_{r}(0)} (\partial_r\beta) \cdot \tau \, d \mathcal{H}^1 \\
        =& \frac{1}r \, (\operatorname{curl } \beta) (B_{R_1}(0)) +  \int_{\partial B_{r}(0)} (\partial_r\beta) \cdot \tau \, d \mathcal{H}^1.
    \end{align*}
    In particular, it follows
    \begin{align*}
    |D \beta|(B_{R_2}(0) \setminus B_{R_1}(0)) &\geq \int_{R_1}^{R_2} \int_{\partial B_r(0)} |\partial_r \beta| \, d\mathcal{H}^1 dr \\
    &\geq |\operatorname{curl } \beta( B_{R_1}(0))| \,  \int_{R_1}^{R_2} \frac1r \, dr = |\operatorname{curl } \beta( B_{R_1}(0))| \, \log \frac{R_2}{R_1}.
    \end{align*}
\end{proof}

\bibliographystyle{abbrv}
\bibliography{main.bib}
    
\end{document}